\documentclass[a4paper,10pt]{amsart}
\usepackage[plainpages=false]{hyperref}
\usepackage{amsfonts,amssymb,amsthm, mathrsfs, lscape}
\usepackage{verbatim}

\usepackage[T1]{fontenc}
\usepackage[utf8]{inputenc}
\usepackage{lmodern}
\usepackage{latexsym}
\usepackage{amssymb}
\usepackage{graphics}
\usepackage{graphicx}
\usepackage[space]{cite}

\usepackage{latexsym}
\usepackage{amsfonts}
\usepackage{amsmath}
\usepackage{latexsym}
\usepackage{amsfonts}
\usepackage{amssymb}
\usepackage{rawfonts}

\usepackage[usenames,dvipsnames]{pstricks}
\usepackage{epsfig}
\usepackage{pst-grad} 
\usepackage{pst-plot} 
\usepackage[space]{grffile} 
\usepackage{etoolbox} 
\makeatletter 

\usepackage{bm}

%
%

\renewcommand{\Re}{{\operatorname{Re}\,}}
\renewcommand{\Im}{{\operatorname{Im}\,}}

\renewcommand{\epsilon}{\varepsilon}

\newcommand{\kahler}{K\"{a}hler }

\newcommand{\poincare}{Poincar\'{e} }

\newcommand{\R}{{\mathbb R}}
\newcommand{\C}{{\mathbb C}}
\newcommand{\D}{{\mathbb D}}

\newcommand{\Z}{{\mathbb Z}}
\newcommand{\B}{{\mathbb B}}
\newcommand{\CH}{{\mathbb{CH}}}

\newcommand{\dbar}{\bar\partial}

\newcommand{\idzdz}{\sqrt{-1}dz\wedge\bar{dz}}

\newcommand{\iddbar}{\sqrt{-1}\partial \bar{\partial}}

\newcommand{\diag}{{\operatorname{diag}}}

\newcommand{\eqd}{\buildrel {\operatorname{def}}\over =}

\newcommand{\ccal}{\mathcal{C}}

\newcommand{\hcal}{\mathcal{H}}

\newcommand{\ocal}{\mathcal{O}}

\newtheorem{theorem}{{Theorem}}[section]

\newtheorem{lemma}[theorem]{Lemma}
\newtheorem{proposition}[theorem]{Proposition}
\newtheorem{conjecture}[theorem]{Conjecture}

\newenvironment{remark}{\medskip\noindent{\it Remark:\/} }{\medskip}

\newtheorem{definition}[theorem]{Definition}

\newtheorem{defandthm}[theorem]{Definition and Theorem}

\theoremstyle{empty}

\numberwithin{equation}{section}

\def \B {\mathbb B}
\def \C {\mathbb C}
\def \Z {\mathbb Z}
\def \R {\mathbb R}

\newcommand{\val}{{\operatorname{Val}}}

\def \P {\mathbb P}

\def \Aut {\text{Aut}}

\title[Bergman Kernel of complex hyperbolic]{On the Bergman Kernel of complex hyperbolic manifolds}

\author{Jingzhou Sun\textsuperscript{\dag}}
\thanks{\dag\  Department of Mathematics, Shantou University; \url{jzsun@stu.edu.cn}}

\begin{document}
	
	\begin{abstract}
We prove a formula for the Bergman kernel of polarized complex hyperbolic manifolds. The formula expresses the Bergman kernel as a sum over the geodesic loops in the manifold. As an application, we prove a result about the maximum and minimum of the Bergman kernel function. We also prove an estimate of the off-diagonal Bergman kernel.
	\end{abstract}

	\maketitle
	\setcounter{tocdepth}{1}
	 \tableofcontents
	\subjclass[2010]{32A25,30F45}

\section{Introduction}
Let $(M, \omega)$ be a Kähler manifold of dimension $n$, and let $L \to M$ be a holomorphic line bundle equipped with a Hermitian metric $h$ whose curvature is $-i\omega$. The Bergman space $\mathcal{H}_k$ consists of holomorphic sections of $L^k$ that are $L^2$-integrable, i.e.,
\[
\int_M |s|_h^2 \frac{\omega^n}{n!} < \infty.
\]
$\mathcal{H}_k$ is naturally a Hilbert subspace of $L^2(M, L^k)$. The Bergman kernel $K_k(x, y)$ is the distribution kernel of the orthogonal projection $L^2(M, L^k) \to \mathcal{H}_k$. 

The density of states function, or Bergman kernel function $\rho_k$, is the pointwise norm of $K_k(x, y)$ on the diagonal, i.e., $\rho_k(x) = K_k(x, x) e^{-k\varphi(x)}$, where $e^{-\varphi(x)}$ is a local representation of $h$. Alternatively, $\rho_k$ can be defined as
\begin{equation}\label{e-rhok}
	\rho_k(z) = \sup_{s \in \mathcal{H}_k,\, \|s\| = 1} |s(z)|^2.
\end{equation}
For domains in $\C^n$, the Bergman kernel is similarly defined for trivial bundles with trivial metrics.

The Bergman kernel has been intensively studied and used for domains in $\C^n$ and for \kahler manifolds (see, e.g., \cite{Fefferman1974,boutet1975singularite,kerzman1978cauchy,donaldson2001,Sun2011Expected,Donaldson2014Gromov,berman2011fekete,Donaldson15, dsz1,dsz2,dsz3,Shiffman2008NV,sz1,bsz0,bsz1}). Except in very symmetric cases (for instance complex space forms), explicit formulas are rare. A fundamental substitute is the Tian-Zelditch-Lu expansion (\cite{Tian1990On, Zelditch2000Szego, Lu2000On, Catlin}): when $M$ is compact,
\[
\rho_k(x)\sim\Big(\frac{k}{2\pi}\Big)^n\Big(1+\frac{b_1(x)}{k}+\frac{b_2(x)}{k^2}+\cdots\Big),
\]
where each $b_j(x)$ is a universal polynomial in the curvature of the Riemannian metric corresponding to $\omega$ and its covariant derivatives. In particular, $b_1(x)$ equals half the scalar curvature. There are also extensions of this expansion to symplectic manifolds and orbifolds (see \cite{MM,dailiuma}). Shiffman-Zelditch~\cite{sz2} and Charles~\cite{Charles2003} proved the asymptotic expansion for the off-diagonal Bergman kernel $K_k(x, y)$. The expansion for the Bergman kernel $K_k(x, y)$, especially for the diagonal $\rho_k(x)$, has been extensively used in the literature.

Given the importance of the Bergman kernel expansion, several natural questions arise. For example, under what conditions does the series $\sum_{i=1}^{\infty} \frac{b_i(x)}{k^i}$ converge? If it does converge, what is the difference between its sum and the actual value of $\rho_k$? More generally, even without convergence, one can consider the truncation
\[
\left(\frac{k}{2\pi}\right)^n \left(1 + \sum_{i=1}^{N(k)} \frac{b_i(x)}{k^i}\right),
\]
where $N(k) \to \infty$ as $k \to \infty$. In this case, the remainder
\[
\rho_k(x) - \left(\frac{k}{2\pi}\right)^n \left(1 + \sum_{i=1}^{N(k)} \frac{b_i(x)}{k^i}\right)
\]
is expected to be $O(k^{-\infty})$. When the local potential of $\omega$ is real analytic, Zelditch conjectured that the remainder of $K_k(x, y)$, with $N(k) = k/C$, should decay exponentially in $k$. This conjecture was confirmed by Rouby-Sjöstrand-Vu Ngoc in~\cite{Rouby-Sj}. Further new proofs were given by Deleporte~\cite{Deleporte2021} and Hezari-Kelleher-Seto-Xu~\cite{hezari2021property}.

\

When $X$ is a Riemann surface with hyperbolic metric, Berman in \cite{berman2012sharp} proved, prior to \cite{Rouby-Sj}, that $K_k(x,y)$ decays exponentially. The proof uses the fact that the pullback of the hyperbolic metric to the unit disk is the standard \poincare metric:
\[
\omega_0 = \frac{2\sqrt{-1}\,dz\wedge d\bar{z}}{(1-|z|^2)^2}.
\]
In particular, he showed that around a point $x$, the remainder is $O(e^{-\delta k})$ with $\delta$ arbitrarily close to $2\log(\cosh \frac{r(x)}{2})$, where $r(x)$ is the injectivity radius at $x$.

\medskip

In the process of exploring the remainder of the asymptotic expansion of the Bergman kernel on hyperbolic Riemann surfaces, we found an exact formula for the Bergman kernel on hyperbolic Riemann surfaces in the first version, and subsequently extended the formula to complex hyperbolic manifolds.
After the first version of this article, the author in \cite{sun-ber-abelian} further extended the result to the case of polarized abelian varieties. 

\

Let $(X,\omega)$ be a \kahler manifold equipped with a complete complex hyperbolic metric, whose Kähler form is $\omega$. Let $(L,h)$ be a Hermitian holomorphic line bundle over $X$, whose curvature is $-i\omega$. For each point $p \in X$, let $\mathfrak{G}_p$ be the set of oriented geodesic loops based at $p$. More explicitly, an element of $\mathfrak{G}_p$ is a geodesic: \[\gamma: [0,l]\to X, l>0, \gamma(0)=p,\gamma(l)=p, \] parametrized by arc length and oriented by its tangent vector. We remind the readers that the tangent vectors at the two endpoints may differ.
For each $\gamma \in \mathfrak{G}_p$, let $\ell(\gamma)$ denote its length, and let $e^{2\pi \alpha_\gamma \sqrt{-1}}$ be the holonomy the Chern connection of $h$ around $\gamma$.

Our first and main result is the following:

\begin{theorem}\label{thm-main}
Let $(X,\omega)$ be a \kahler manifold with complete complex hyperbolic metric. Let $L \to X$ be a polarization, equipped with a Hermitian metric $h$ on $L$ with curvature $- i \omega$. 
  Let $k_0=2n+[\frac{n}{2}]+1$ ($k_0=n+[\frac{n}{2}]+1$ when $X$ is compact).
  Then for all $k \geq k_0$, the Bergman kernel function $\rho_k(p)$ satisfies the following equality:
\[
\rho_k(p)
= \frac{1}{(4\pi)^n} \frac{(2k-1)!}{(2k-n-1)!}\left(1+\sum_{\gamma\in\mathfrak{G}_p}
\cosh^{-2k}\!\Big(\frac{\ell(\gamma)}{2}\Big)\cos\!\big(2\pi k\alpha_\gamma\big)\right).
\]

\end{theorem}
\begin{remark}
    \begin{itemize}
        \item We should remark here that the main novelty of this result lies in the explicit formula for the Bergman kernel function, which involves global data like lengths of geodesic loops and the holonomy of the line bundle, although Lu-Zelditch \cite{lu2016szegHo} and Ma-Marinescu \cite{ma2015exponential} have proved formulas expressing the Bergman kernel in terms of the Bergman kernel of the universal cover (see Definition and Theorem \ref{def-and-thm-k-gamma}). This type of results for Bergman kernel have never been seen before.
		\item It is easy to notice that there is a striking analogy between this result and the Selberg trace formula (see \cite{patterson1978dennis}). We will be excited to see a deeper connection to the Selberg trace formula.
    \end{itemize}
\end{remark}

When $\dim X=1$ and $X$ has cusps, it was first proved in \cite{AMM} that, under some extra condition on the Hermitian metric on the line bundle, the supremum of $\rho_k$ is approximately $(\frac{k}{2\pi})^{3/2}$. And it was noticed in \cite{Punctured} that $\rho_k$ exhibits a quantum phenomenon around the cusps. In view of Theorem \ref{thm-main}, this can be understood as following. For a point close to the cusps, there are many short geodesic loops. So if the holonomies make the terms $\cosh^{-2k}(\frac{\gamma}{2})\cos(2\pi k\alpha_\gamma)$ cancel out each other, $\rho_k$ becomes small, otherwise $\rho_k$ can be large. It is also worth mentioning that this quantum phenomenon was also proved for Cheng-Yau metrics in \cite{sun2024ChengYau}.

Given the importance of complex hyperbolic manifolds and Bergman kernels,
there are certainly many potential applications of our main result. We show one application of this result.
Recall that the systole length of $X$ is defined as minimum of the lengths of all closed geodesics in $X$. 

For simplicity, let $l_1 = \mathrm{sys}(X)$, and we call a closed geodesic of systole length a systole.
\begin{theorem}\label{thm-max-min}
	Let $X$ be a hyperbolic Riemann surface of finite topology with no cusps. Assume further that no two systoles intersect. Then, there exists $l'>l_1$ and $k_0 > 0$ such that for all $k \geq k_0$, if the holonomies of $(kL, h^k)$ around all systoles are $1$ ($-1$, respectively), the maximum (minimum, respectively) of $\rho_k$ is attained in the exponentially small region $$\left\{ x \in X ,\Big| d(x,\gamma) < \left(\frac{\cosh\left(\frac{l_1}{2}\right)}{\cosh\left(\frac{l'}{2}\right)}\right)^{k} \right\}$$ for some systole $\gamma$.
\end{theorem}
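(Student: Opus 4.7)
My plan is to apply Theorem~\ref{thm-main} directly and to isolate, in the sum over geodesic loops at $p$, the contribution of loops whose free homotopy class is represented by a systole. I describe the argument for the maximum under the $+1$ holonomy hypothesis; the minimum case is parallel, with $-2\cosh^{-2k}(l_1/2)$ in place of $+2\cosh^{-2k}(l_1/2)$ throughout.

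First, choose $l'>l_1$ with $l'<2l_1$ and strictly smaller than the length of every non-systole closed geodesic in $X$. Under the standing hypotheses (finite topology, no cusps, no two systoles intersecting) the length spectrum is discrete, there are finitely many systoles, they are pairwise disjoint, and at most one systole passes through any $p\in X$; so such an $l'$ exists.

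Second, establish a uniform tail estimate. Set
\[
S_k^{\mathrm{far}}(p)\ :=\ \sum_{\substack{\gamma\in\mathfrak G_p\\ l(\gamma)\ge l'}} \cosh^{-2k}\bigl(l(\gamma)/2\bigr)\cos(2\pi k\alpha_\gamma).
\]
The same lattice-point counting argument that underlies the convergence of the sum in Theorem~\ref{thm-main} should give
\[
|S_k^{\mathrm{far}}(p)|\ \le\ C_0\,\cosh^{-2k}(l'/2)
\qquad\text{uniformly in }p\in X
\]
for $k$ large. Making this counting genuinely uniform over the (possibly noncompact) surface is the main technical obstacle; the compactness of the convex core, together with the fact that the count for $p$ in a funnel is dominated by the count at its nearest point of the core, should do it.

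Third, compare. For $p_0$ on a systole $\gamma_0$, the loops $\gamma_0^{\pm 1}\in\mathfrak G_{p_0}$ contribute $+2\cosh^{-2k}(l_1/2)$; higher powers $\gamma_0^{\pm n}$ satisfy $l(\gamma_0^{\pm n})=|n|l_1\ge 2l_1>l'$ and are absorbed into $S_k^{\mathrm{far}}$. For $p$ at distance $d>0$ from $\gamma_0$, the standard formula for the displacement of a hyperbolic isometry off its axis gives that the shortest geodesic loop at $p$ in the free homotopy class of $\gamma_0$ has length $l_p$ with
\[
\cosh^{2}(l_p/2)\ =\ \cosh^{2}(l_1/2)+\sinh^{2}(l_1/2)\sinh^{2}(d).
\]
Combining,
\[
\rho_k(p_0)-\rho_k(p)\ \ge\ \tfrac{k-1/2}{2\pi}\Big(2\cosh^{-2k}(l_1/2)-2\cosh^{-2k}(l_p/2)-2C_0\cosh^{-2k}(l'/2)\Big),
\]
and the elementary inequality $(\cosh(l_1/2)/\cosh(l_p/2))^{2k}\le \exp(-c\,k\,d^2)$ (valid for $d$ bounded) bounds the main difference from below by $c'\,k\,d^2\,\cosh^{-2k}(l_1/2)$. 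Substituting $d=(\cosh(l_1/2)/\cosh(l'/2))^k$ turns this lower bound into $c'\,k\,\cosh^{-2k}(l'/2)$, which exceeds $2C_0\cosh^{-2k}(l'/2)$ once $k$ is large enough. Hence the maximum of $\rho_k$ cannot be attained outside the stated exponentially small neighborhood of some systole.
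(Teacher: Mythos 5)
Your overall strategy is the same as the paper's: apply Theorem~\ref{thm-main}, isolate the contribution of the loops winding once around a systole, control the remaining loops by a uniform tail estimate coming from orbit counting, and use the displacement formula (your $\cosh^{2}(l_p/2)=\cosh^{2}(l_1/2)+\sinh^{2}(l_1/2)\sinh^{2}(d)$ is exactly \eqref{e-length-gamma}) so that the difference $2\cosh^{-2k}(l_1/2)-2\cosh^{-2k}(l_p/2)\gtrsim k d^2\cosh^{-2k}(l_1/2)$ beats the tail, forcing $d\lesssim(\cosh(l_1/2)/\cosh(l'/2))^{k}$. This is precisely the paper's final computation. However, there is a genuine gap at your central comparison inequality.

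The inequality $\rho_k(p_0)-\rho_k(p)\ge\frac{k-1/2}{2\pi}\bigl(2\cosh^{-2k}(l_1/2)-2\cosh^{-2k}(l_p/2)-2C_0\cosh^{-2k}(l'/2)\bigr)$ presupposes that at $p$ (and at $p_0$) the only geodesic loops of length $<l'$ are the two loops associated with the lift of $\gamma_0$ nearest to $\tilde p$. Your choice of $l'$ (namely $l_1<l'<2l_1$ and $l'$ below every non-systole geodesic length) only guarantees that every loop shorter than $l'$ is freely homotopic to some systole traversed once; it does not bound the number of such loops by two. With $l'$ as chosen, the ``short-loop range'' of a systole can have width comparable to $l_1/2$, while two disjoint systoles, or two distinct lifts of the same systole, may be much closer than that (only a collar width apart), so a point $p$ can carry four or more loops of length $<l'$, with holonomy phases you cannot control in sign. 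If the count is $C'>2$, the comparison only yields $k d^2\gtrsim\log(C'/2)$, i.e.\ a neighborhood of size $O(k^{-1/2})$, and the exponential smallness — the whole content of the theorem — is lost. Securing the constant $2$ is exactly what the paper's proof does and what uses the hypothesis that no two systoles intersect: it introduces $A_g(x)=\min_{f\in\Gamma\setminus\{1,g,g^{-1}\}}d(x,f(x))$, shows $A_g>l_1$ along each systole, and by continuity and compactness produces neighborhoods $V_\gamma$ on which all loops other than the two primitive ones around $\gamma$ have length at least $\frac{l_1+l_3}{2}>l_1$, together with a uniform bound $l_4>l_1$ for all loop lengths outside $V$; only below these thresholds may $l'$ be chosen (alternatively one could invoke the collar lemma, but some such input is indispensable). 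Relatedly, your uniform tail bound and its uniformity over the funnels is only asserted; the paper's Lemma~\ref{lem-n-r-p}, proved via the uniformly positive injectivity radius and an area-packing argument, is the statement that makes this step rigorous and should replace your ``nearest point of the convex core'' heuristic.
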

\begin{remark}
	By Proposition~\ref{prop-kx}, when $L = K_X$, the holonomy around any systole $\gamma$ is always $1$. So the maximum part of the above theorem applies to the canonical bundle.
\end{remark}

\medskip
It is known that, for a positive line bundle over a compact \kahler manifold, when $y$ is within a $O(k^{-1/2})$ small neighborhood of $x$, the off-diagonal Bergman kernel decays exponentially like $e^{-\frac{k}{2}d^2(x,y)}$ as $d(x,y)$ increases(see for example, \cite{Shiffman2008}). For positive distances, when the metric is real analytic, when $d(x,y)\ge\varepsilon>0$ one has the uniform bound
\[
\big|K_k(x,y)\big|_{h^k}\le C_\varepsilon e^{-c_\varepsilon k},
\]
i.e. the kernel is exponentially small in $k$ off the diagonal. In the case of hyperbolic Riemann surfaces of finite volume and $L=K_X$, Aryasomayajula-Majumder (\cite{aryasomayajula2018off,aryasomayajula2020estimates}) proved similar results, which involves the distance and the injectivity radius of $X$, for the off-diagonal kernel when the distance between $x$ and $y$ is smaller than the injectivity radius of $X$.
Our result is the first result that shows that decaying rate of the off-diagonal Bergman kernel depends explicitly on the distance between $x$ and $y$, globally. Notice that our result also allow infinite volume and cusps.

We also have the following estimate for off-diagonal Bergman kernel:
\begin{theorem}\label{thm-off-diag}
	Let $(X,\omega,L,h)$ be a poloarized complex hyperbolic manifold as in the setting of Theorem \ref{thm-main}. Let $k_0=2n+[\frac{n}{2}]+1$ ($k_0=n+[\frac{n}{2}]+1$ when $X$ is compact).
  Then for all $k \geq k_0$, and $x\neq y$, we have 
	\[|K_k(x,y)|_{h^k}\leq \frac{1}{(4\pi)^n} \frac{(2k-1)!}{(2k-n-1)!}\sum_{\gamma\in \mathfrak{G}_{x,y}}\cosh^{-k}\frac{\ell(\gamma)}{2} .
	\]
	And if there is a unique geodesic minimizing $d(x,y)$, there exist $k_1>0, C>0$ such that for all $k \geq k_1$,
	\[
	\big| |K_k(x,y)|_{h^k}-\frac{1}{(4\pi)^n} \frac{(2k-1)!}{(2k-n-1)!}\cosh^{-k}\frac{d(x,y)}{2}\big| \leq C k^n \cosh^{-k}\frac{l_2}{2},
	\]
	where $l_2=\min_{\gamma\in \mathfrak{G}_{x,y}, \ell(\gamma)>d(x,y)} \ell(\gamma)$.

\end{theorem}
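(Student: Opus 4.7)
The plan is to mirror the proof strategy of Theorem \ref{thm-main} in the off-diagonal setting, using the universal cover. Writing $X=\Gamma\backslash\D$ for a Fuchsian group $\Gamma$ and lifting $x,y$ to $\tilde x,\tilde y\in\D$, I expect that the proof of Theorem \ref{thm-main} already provides a Poincar\'e series representation of the form
\[
K_k(x,y) \;=\; \sum_{\gamma\in\Gamma} \chi_k(\gamma)\,\tilde K_k(\tilde x,\gamma\tilde y),
\]
where $\tilde K_k$ is the Bergman kernel of the Poincar\'e disk with line-bundle weight $k$ and $\chi_k(\gamma)$ encodes the holonomy of $(L^k,h^k)$; its convergence for $k\geq 3$ (resp.\ $k\geq 2$ when $X$ is compact) is inherited from the main theorem.

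The disk Bergman kernel has the explicit form $\tilde K_k(z,w)=\tfrac{k-1/2}{2\pi}(1-z\bar w)^{-2k}$ in the natural trivialization, and combined with the identity
\[
\cosh^{2}\!\bigl(d(z,w)/2\bigr) \;=\; \frac{|1-z\bar w|^{2}}{(1-|z|^{2})(1-|w|^{2})}
\]
this gives $|\tilde K_k(z,w)|_{h^{k}}=\tfrac{k-1/2}{2\pi}\cosh^{-k}(d(z,w)/2)$, agreeing with the main theorem at $\tilde x=\tilde y$. Since the covering map sets up a bijection $\Gamma\leftrightarrow\mathfrak{G}_{x,y}$ with $d(\tilde x,\gamma\tilde y)=\ell(\gamma)$, applying the triangle inequality termwise to the Poincar\'e series immediately yields the first asserted upper bound. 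For the refined estimate, I would isolate the term $T_{\gamma_0}$ coming from the unique length-minimizing representative $\gamma_0$, whose norm is exactly $\tfrac{k-1/2}{2\pi}\cosh^{-k}(d(x,y)/2)$; the reverse triangle inequality then gives
\[
\bigl|\,|K_k(x,y)|_{h^{k}} - |T_{\gamma_0}|_{h^{k}}\,\bigr| \;\leq\; \frac{k-1/2}{2\pi}\sum_{\gamma\neq\gamma_0}\cosh^{-k}\!\bigl(\ell(\gamma)/2\bigr).
\]

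To finish, I would show that the tail sum on the right is bounded by a constant $C$ depending only on $l_2$ (and not on $k$); combined with the prefactor $(k-1/2)/2\pi$ this produces the asserted $Ck$ factor. The key inequality $\cosh(\ell/2)\geq\cosh(l_2/2)\cosh((\ell-l_2)/2)$ for $\ell\geq l_2$ reduces the estimate to controlling $\sum_{\gamma}\cosh^{-k}((\ell(\gamma)-l_2)/2)$, which is uniformly bounded in $k\geq 3$ as soon as one has the standard orbit-counting bound $\#\{\gamma\in\Gamma:\ell(\gamma)\leq L\}\leq Ce^{L}$ coming from hyperbolic volume growth. The main obstacle I anticipate is verifying this orbit-counting bound uniformly in the non-compact settings permitted by the hypotheses (finite topology with possible cusps, or merely positive injectivity radius), where parabolic orbits near cusps need particular care; however, since analogous diagonal sums are already handled in Theorem \ref{thm-main}, the same arguments should transfer.
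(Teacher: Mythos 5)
Your proposal follows essentially the same route as the paper: the paper's proof invokes the $\Gamma$-summed kernel $K^\Gamma_k$ of Theorem \ref{thm-k-gamma-bergman} (exactly your Poincar\'e series), bounds each translate $g^*s_0$ by the explicit off-diagonal decay of the disk peak section, identifies the orbit $\{d(\tilde x,g\tilde y)\}$ with $\mathfrak{G}_{x,y}$, and for the refined estimate isolates the minimizing term and controls the tail with the orbit-counting Lemma \ref{lem-n-r-p}, just as you outline. One bookkeeping caveat: the identity you quote actually gives $|\tilde K_k(z,w)|_{h^k}=\frac{k-1/2}{2\pi}\cosh^{-2k}\bigl(d(z,w)/2\bigr)$ (consistent with Theorem \ref{thm-disk-kk} and with the $\cosh^{-2k}$ in the diagonal formula), not $\cosh^{-k}$; the $\cosh^{-k}$ you reproduce is inherited from the normalization in the statement itself, and since $\cosh^{-2k}\le\cosh^{-k}$ this does not change the structure of your argument, but you should not present it as a consequence of that identity.
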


As a corollary of Theorem \ref{thm-f-b}, we have the following result, which is of independent interest:
\begin{theorem}\label{thm-jak}
Let $J(a,k) = \int_{-\pi/2}^{\pi/2} e^{2at} \cos^{2k-2} t\, dt$. Then for $k\geq 2$, $J(z,k)$ has no zeros in the strip $\{|\Im z| < k\} \subset \C$.
\end{theorem}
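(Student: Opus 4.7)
The plan is to derive the explicit closed form
\[
J(a,k) \;=\; \frac{\pi\,\Gamma(2k-1)}{2^{2k-2}\,\Gamma(k+ia)\,\Gamma(k-ia)}, \qquad a\in\C,\ k\geq 2,
\]
and then read off the zeros from the standard pole structure of the Gamma function.

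First I would observe that $J(a,k)$ is entire in $a$, being the integral of an entire integrand over a compact interval, and that the claimed right-hand side is entire as well since $1/\Gamma$ is entire. Thus, by the identity theorem, it suffices to verify the formula on a uniqueness set, for which the imaginary axis is most convenient. Writing $a=ib$ with $b\in\R$, the odd part $\sin(2bt)$ of $e^{2ibt}$ integrates to zero against the even weight $\cos^{2k-2}t$, leaving
\[
J(ib,k) \;=\; 2\int_0^{\pi/2}\cos(2bt)\cos^{2k-2}t\,dt.
\]

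Second, I would invoke the classical beta-type identity
\[
\int_0^{\pi/2}\cos^{\mu-1}x\,\cos(\lambda x)\,dx
\;=\; \frac{\pi}{2^{\mu}\,\mu\,B\!\left(\tfrac{\mu+\lambda+1}{2},\tfrac{\mu-\lambda+1}{2}\right)}, \qquad \mu>0,
\]
applied with $\mu=2k-1$ and $\lambda=2b$. Using $B(x,y)=\Gamma(x)\Gamma(y)/\Gamma(x+y)$ and $(2k-1)\Gamma(2k-1)=\Gamma(2k)$, this simplifies to
\[
J(ib,k) \;=\; \frac{\pi\,\Gamma(2k-1)}{2^{2k-2}\,\Gamma(k+b)\,\Gamma(k-b)}.
\]
Substituting $b=-ia$ and analytically continuing gives the closed form. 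As a back-up, the same identity is obtainable by expanding $\cos^{2k-2}t = 2^{2-2k}\sum_{j=0}^{2k-2}\binom{2k-2}{j}e^{i(2k-2-2j)t}$, integrating termwise, and summing the partial fractions (noting that the apparent simple poles on the imaginary axis cancel because of alternating signs).

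Finally, since $1/\Gamma$ is entire, every zero of $J(a,k)$ must come from a pole of $\Gamma(k+ia)\Gamma(k-ia)$. These poles occur exactly when $k\pm ia\in\{0,-1,-2,\ldots\}$, i.e.\ at $a=\pm i(k+n)$ for $n\in\Z_{\geq 0}$; all such $a$ satisfy $|\Im a|\geq k$. Consequently $J(a,k)\neq 0$ throughout the strip $\{|\Im z|<k\}$, which is exactly the claim. I expect the only genuine work to be the beta-integral evaluation in the middle step; once the closed form is in hand the conclusion is immediate, and in fact we see the bound is sharp, with the outermost zeros sitting precisely on the boundary lines $\Im a=\pm k$.
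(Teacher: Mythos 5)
Your proof is correct, and it takes a genuinely different route from the paper. The paper obtains the theorem as a corollary of its Theorem \ref{thm-f-b} (the exact formula $F(b)=(k+\tfrac12)\cosh^{-2(k+1)}(\pi b)$ for the Fourier transform of $1/J(a,k+1,1)$, itself proved via delicate Bergman-kernel limits on hyperbolic cylinders): since $F$ decays like $e^{-2(k+1)\pi|\xi|}$, the Paley--Wiener theorem gives a holomorphic extension of $1/J$ to the strip $\{|\Im a|<k+1\}$, hence $J$ has no zeros there. You instead evaluate $J$ in closed form, reducing on the imaginary axis to the classical Cauchy beta integral
\[
\int_0^{\pi/2}\cos^{\mu-1}x\,\cos(\lambda x)\,dx=\frac{\pi}{2^{\mu}\,\mu\,B\!\left(\tfrac{\mu+\lambda+1}{2},\tfrac{\mu-\lambda+1}{2}\right)},
\]
with $\mu=2k-1$, $\lambda=2b$, and then extending by the identity theorem (both sides entire, equal on the imaginary axis) to get
\[
J(a,k)=\frac{\pi\,\Gamma(2k-1)}{2^{2k-2}\,\Gamma(k+ia)\,\Gamma(k-ia)}.
\]
I checked this against $a=0$ (Wallis) and the large-$a$ asymptotics, and it is right; the zero set is then exactly $\{\pm i(k+n):n\ge0\}$, so the strip $\{|\Im z|<k\}$ is zero-free and the bound is sharp. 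What your approach buys: it is elementary and self-contained (no Bergman-kernel machinery, no Paley--Wiener), it identifies the zeros exactly rather than merely excluding them from a strip, and in fact your closed form could be used to give an independent derivation of the paper's Theorem \ref{thm-f-b} via known Fourier transforms of $|\Gamma(k+ia)|^{-2}$-type expressions. What the paper's route buys is that the theorem falls out for free from machinery it needs anyway for the main results. The only soft spot in your write-up is the back-up derivation by binomial expansion, which is sketched rather than carried out, but the main argument does not rely on it.
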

We propose the following rigidity conjecture:

\begin{conjecture}
	Let $(X,L)$ and $(X',L')$ be polarized complex hyperbolic manifolds of dimension $n$, that satisfy the conditions of Theorem \ref{thm-main}. Suppose there is a diffeomorphism $\Phi:X\to X'$ and an integer $k\geq k_0$ ($k_0$ is the same as that in Theorem \ref{thm-main}) such that
	\[
	\rho_{L,k}=\Phi^*\rho_{L',k}.
	\]
	Then $\Phi$ is either a biholomorphism or an anti-biholomorphism, and moreover
	\[
	\Phi^*((L')^{\!k})\cong L^{\!k}\qquad\text{or}\qquad \Phi^*((L')^{\!k})\cong\overline{L^{\!k}},
	\]
	respectively. Here $\overline{L}$ denotes the complex-conjugate line bundle of $L$.
\end{conjecture}

A weaker version of this conjecture is the following conjecture:
\begin{conjecture}\label{conj-second}
	Let $(X,L)$ be a polarized complex hyperbolic manifold of dimension $n$, that satisfy the conditions of Theorem \ref{thm-main}. Let $L'$ be another polarization of $X$ with respect to a Hermitian metric $h'$ on $L'$ such that $\Theta_{h'}=\Theta_h$. Suppose that $$\rho_{L',k}=\rho_{L,k}$$ for some $k\geq k_0$ ($k_0$ is the same as that in Theorem \ref{thm-main}). Then $kL'$ is isomorphic to $kL$.
\end{conjecture}
In the case of abelian varieties, we have proved the corresponding conjecture in \cite{sun-ber-abelian}.

\medskip

We briefly discuss the proof of these theorems. To simplify the notations, we will refer to dimension 1 complex hyperbolic cylinders as hyperbolic cylinders.
The main strategy for proving Theorem \ref{thm-main} consists of four steps: first, prove the theorem on hyperbolic cylinders; second, prove the theorem on higher dimensional complex hyperbolic cylinders; third, prove the theorem on complex hyperbolic cusps; and fourth,
by using a formula for the pullback of the Bergman kernel to the universal covering, we combine the results obtained in the first three steps to obtain the formula for general complex hyperbolic manifolds. This approach was inspired by the proof of the Selberg trace formula (see \cite{patterson1978dennis}). The primary challenge in the first step is the absence of a direct formula for the Fourier transform of $1/J(a,k)$. Our method relies on precise estimates for the Bergman kernel in the disk model and a technique involving the enlargement of the hyperbolic cylinder. After the proof of the formula for the Fourier transform of $1/J(a,k)$, we compute the holonomy along any geodesic loop in a hyperbolic cylinder. Then we do not directly prove the main theorem for hyperbolic cylinders, but compress the proof to the case of general dimensions, as it is only a special case.
The primary challenge in the second step is the construction of the complex hyperbolic cylinder and the calculations of the holonomy of higher dimensional complex hyperbolic cylinders. In fact, both the construction of the complex hyperbolic cylinder and the calculation of the holonomy are non-trivial.

 In the third step, we use complex hyperbolic cylinders to approximate complex hyperbolic cusps. Compared to the dimension one case, the classification of complex hyperbolic cusps of higher dimension is much more complicated. Interestingly, following our strategy of proof, we do not need to study general complex hyperbolic cusps. In fact, we only need to study those complex hyperbolic cusps that are quotients of the complex hyperbolic space form by the discrete groups genereted by one parabolic element.
 As will be seen, there are two types of such complex hyperbolic cusps. For the first type, the approximation is similar to the one-dimensional case, as seen in \cite{sun2024apde}. Geometrically, the complex hyperbolic half cylinders approximate the complex hyperbolic cusps of type one. The approximation of the complex hyperbolic cusps of type two is a major challenge. The approximation maps we used in this article is out of brute force trying. It remains a very interesting problem to understand the geometric meaning of this approximation. This approximation also provides a very good model for the study of degenerating families of \kahler-Einstein manifolds. Once we have constructed the approximations of complex hyperbolic cusps by complex hyperbolic cylinders, we go on to prove that the Bergman kernel of the complex hyperbolic cylinders approximate that of the complex hyperbolic cusps. It is interesting to see that the proofs for the two types of cusps are basically parallel, despite the tremendous difference in the construction of the approximation maps. It is also worth noticing that in proving the approximations of the Bergman kernels, we do not need to use H\"{o}mander's $L^2$ estimates.

 Finally, in the fourth step, we prove that the pull back of the off-diagonal Bergman kernel to the unit ball model can be expressed as a summation of the translations of the Bergman kernel of the unit ball over the whole Fuchsian group. Theorem \ref{thm-off-diag} is a direct consequence of this summation formula. Theorem \ref{thm-max-min} is a careful analization of the Bergman kernel near the systoles.

 It is worth mentioning that one possible way to prove the claim in the proof of Lemma \ref{lem-int-s-sum-g-s-p} is to show that the Bergman kernel function on $X$ is bounded. In dimension 1, under the assumption of finite topology, or in general dimension, under the assumption of positive injectivity radius, this is indeed the case. However, we do not know if this is true in general. It is an interesting question of independent interest. 

Since complex hyperbolic cusps are important models for singular \kahler-Einstein metrics (see for example \cite{datar2023kahler}), the two cusp models, especially the second model, also provides good playground for this subject.

\medskip

\noindent\textbf{Organization of the paper.}
 In Section~\ref{sec-space-form}, we first recall the basics of the Bergman kernel, holonomy and the complex hyperbolic space form, including the ball model and the Siegel domain model. Then we prove Theorem \ref{thm-parabolic-conjugacy}, which classifies the parabolic elements. Section~\ref{sec-disk} provides estimates for the Bergman kernel using geodesic balls (the disk model) and proves Theorem~\ref{thm-disk-model}, which is used in the proof of Theorem \ref{thm-f-b}. In Section~\ref{sec-hyper-cyl}, we first study the Bergman kernel of the hyperbolic cylinder, hence introducing the function $J(a,k)$; then we prove Theorem~\ref{thm-f-b}. We also prove Theorem~\ref{thm-jak} as an application of Theorem~\ref{thm-f-b}.
 In Section~\ref{sec-comp-hyper-cyl}, we first construct the complex hyperbolic cylinders, then study the Bergman kernel of the complex hyperbolic cylinders. In Section~\ref{sec-holonomy}, we calculate the holonomy of the Chern connections associated to the standard Hermitian metrics on the trivial line bundles on complex hyperbolic cylinders and prove that Theorem~\ref{thm-main} holds in this case. In Section~\ref{sec-cusp-I}, we first construct the complex hyperbolic cusp of type one, then prove that we can use complex hyperbolic cylinders to approximate complex hyperbolic cusp of type one. Then we prove that Theorem~\ref{thm-main} holds in this case. Similarly, in Section~\ref{sec-cusp-II}, we first construct the complex hyperbolic cusp of type two, then prove that we can use complex hyperbolic cylinders to approximate complex hyperbolic cusp of type two. Then we prove that Theorem~\ref{thm-main} holds in this case. 
 Section~\ref{sec-proof-main} proves Theorems~\ref{thm-main}, \ref{thm-max-min} and \ref{thm-off-diag}.

\medskip

\noindent\textbf{Acknowledgements.} The author would like to thank Professor Song Sun for many very helpful discussions. The author also thanks the IASM of Zhejiang University for their hospitality during the preparation of this manuscript.

\section{Backgrounds and preparations}\label{sec-space-form}

\subsection{Bergman kernel and holonomy}

\subsubsection*{The Bergman kernel}

Let $(M,L)$ be a polarized manifold of dimension $n$. Let $h$ be a Hermitian metric whose curvature is $-i\omega$, where $\omega$ is a \kahler form. 
The Bergman kernel $K_k(x,y)$ is the integral kernel of the orthogonal projection $L^2(M,L^k)\to\hcal_k$, characterized by
\[
s(y)=\int_M \langle s(x),K_k(x,y)\rangle_{h^k}\,\frac{\omega^n(x)}{n!}\qquad(\forall s\in\hcal_k).
\]
The construction of $K_k(x,y)$ is as follows.
For each point $y\in X$, valuation map $\val_y:\hcal_k\to L^k_y$ is a bounded functional. So there is a unique $L^2$-integrable holomorphic section $K_y$ of $L^k$ with values in $\overline{L^k_y}$ such that \begin{equation}
	\langle s,K_y \rangle=s(y), \quad \forall s\in \hcal_k.
\end{equation}
The Bergman kernel is then defined as $K(x,y)\eqd K_y(x)$. If $\{s_i\}_{1\leq i<\infty}$ is an orthonormal basis of $\hcal_k$, then \[K(x,y)=\sum_{i=1}^{\infty}s_i(x)\otimes \bar{s}_i(y).\]
From the definition of $K(x,y)$, this summation is independent of the choice of the orthonormal basis.

\subsubsection*{Holonomy of line bundles}
Let $(L,h)\to M$ be a Hermitian line bundle endowed with a unitary connection $\nabla$. Fix a base point $x\in M$ and let $\gamma:[0,1]\to M$ be a oriented loop with $\gamma(0)=\gamma(1)=x$.

If $\gamma$ is piecewise $C^1$, parallel transport along $\gamma$ is defined by solving the ordinary differential equation
\[
\nabla_{\dot\gamma(t)}s(t)=0,\qquad s(0)=v\in L_x,
\]
which yields a unique section $s(t)$ along $\gamma$. The parallel transport map
\[
P_\gamma:\;L_x\to L_x,\qquad P_\gamma(v)\eqd s(1),
\]
is a unitary linear map of the fibre $L_x$, hence multiplication by a unit complex number. The holonomy of $(L,\nabla)$ along $\gamma$ is the element
\[
\operatorname{Hol}_\nabla(\gamma)\in\mathrm{U}(1)
\]
characterising $P_\gamma$, i.e. $P_\gamma=\operatorname{Hol}_\nabla(\gamma)\cdot\mathrm{Id}_{L_x}$. Since the Chern connection of a holomorphic line bundle is determined by the Hermitian metric $h$, we also use $\operatorname{Hol}_h(\gamma)$ or $\operatorname{Hol}_L(\gamma)$ to denote the holonomy of the Chern connection of $(L,h)$ along $\gamma$.

\subsubsection*{Poisson summation formula}

\begin{theorem}[Poisson summation formula]
Let $\hat{g}$ be the Fourier transform of $g$. Suppose $g(x) = \int_{\R} \hat{g}(y) e^{2\pi i x y} dy$ with $|g(x)| \leq A(1+|x|)^{-1-\delta}$ and $|\hat{g}(y)| \leq A(1+|y|)^{-1-\delta}$ for some $\delta > 0$. Then
\[
\sum_{c=-\infty}^{\infty} g(x + c) = \sum_{\xi=-\infty}^{\infty} \hat{g}(\xi) e^{2\pi i \xi x}.
\]
\end{theorem}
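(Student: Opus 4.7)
The plan is to reduce the identity to Fourier series uniqueness applied to the periodization of $g$. I would proceed in three stages.

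First, I would form the periodization
\[
F(x) \eqd \sum_{c \in \Z} g(x+c).
\]
The pointwise bound $|g(x)| \leq A(1+|x|)^{-1-\delta}$ makes this series absolutely and locally uniformly convergent by comparison with $\sum_{c} (1+|c|)^{-1-\delta}$, so $F$ is continuous on $\R$, and it is manifestly $1$-periodic by reindexing.

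Second, I would compute the Fourier coefficients of $F$ on $\R/\Z$. For each $\xi \in \Z$, Fubini (justified by the absolute convergence of the series and the decay of $g$) gives
\[
\int_0^1 F(x)\, e^{-2\pi i \xi x}\, dx = \sum_{c \in \Z} \int_0^1 g(x+c)\, e^{-2\pi i \xi x}\, dx = \sum_{c \in \Z} \int_c^{c+1} g(y)\, e^{-2\pi i \xi y}\, dy = \hat g(\xi),
\]
where I used $e^{-2\pi i \xi c} = 1$ for $c \in \Z$ to shift the exponential inside each integrand. Thus the $\xi$-th Fourier coefficient of $F$ is exactly $\hat g(\xi)$.

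Third, the decay $|\hat g(y)| \leq A(1+|y|)^{-1-\delta}$ ensures that $\widetilde F(x) \eqd \sum_{\xi \in \Z} \hat g(\xi)\, e^{2\pi i \xi x}$ converges absolutely and uniformly, defining a continuous $1$-periodic function whose Fourier coefficients on $\R/\Z$ are precisely $\hat g(\xi)$, matching those of $F$. Uniqueness of Fourier series for continuous $1$-periodic functions then forces $F = \widetilde F$ pointwise, which is exactly the Poisson summation identity.

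The plan has no substantial obstacle; each step is justified directly by the two decay hypotheses. The most delicate point is arguably the Fourier series uniqueness invoked in the third step, which for a continuous $1$-periodic function $F - \widetilde F$ with all Fourier coefficients zero follows from Fejér's theorem (the Cesàro means of its Fourier series converge uniformly to the function, yet they are all identically zero) or, equivalently, from the Weierstrass density of trigonometric polynomials together with Parseval's relation.
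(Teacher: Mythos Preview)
Your argument is correct and is the standard proof of the Poisson summation formula. Note, however, that the paper does not supply its own proof of this statement: the theorem is merely recalled as a classical result and then applied, so there is nothing in the paper to compare against beyond observing that your proof is the textbook one.
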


\subsection{Complex hyperbolic space form}
We recall the construction of complex hyperbolic space $\CH^n$, following \cite{chen1974hyperbolic} and \cite{chg-Goldman1999}.

Let $Z = (Z_1, \ldots, Z_{n+1})$ be coordinates on $\C^{n+1}$. Define $\C^{n,1}$ as $\C^{n+1}$ equipped with the Hermitian form
\[
\langle Z, W \rangle = Z_1 \bar{W}_1 + \cdots + Z_n \bar{W}_n - Z_{n+1} \bar{W}_{n+1}.
\]
Let $U(n,1)$ denote the group of unitary automorphisms of $\C^{n,1}$. A vector $Z \in \C^{n,1}$ is called negative if $\langle Z, Z \rangle < 0$. The complex hyperbolic $n$-space $\CH^n\subset \P \C^{n,1}$ is defined as the subset consisting of negative lines. The biholomorphism group of $\CH^n$ is the image of $U(n,1)$ in $\mathbf{PGL}(\C^{n,1})$, denoted by $PU(n,1)$.

The unit ball model $\B^n \subset \C^n$ is identified with $\CH^n$ via the map $\C^n \to \P \C^{n,1}$, $z \mapsto [z,1]$. Under this identification, the closure $\overline{\B^n}$ is identified with $\overline{\CH^n}$. By the Brouwer fixed point theorem, every $g \in PU(n,1)$ has a fixed point in $\overline{\CH^n}$. There are three types of elements in $PU(n,1)$:
\begin{itemize}
	\item \emph{Elliptic}: one fixed point in $\CH^n$,
	\item \emph{Parabolic}: one fixed point on $\partial\CH^n$,
	\item \emph{Loxodromic}: two fixed points on $\partial\CH^n$.
\end{itemize}

\subsection{Ball model}

Let $x = (x_1, \ldots, x_n)$ be coordinates on $\C^n$.

The Bergman metric (complex hyperbolic metric) on $\B^n$ is given by $\omega_B = -\sqrt{-1} \partial \bar{\partial} h_B$, where $h_B = 2 \log(1 - |x|^2)$. The corresponding volume form is
\[
\frac{\omega_B^n}{n!} = \frac{(2\sqrt{-1})^n}{(1 - |x|^2)^{n+1}} dx_1 \wedge d\bar{x}_1 \cdots dx_n \wedge d\bar{x}_n.
\]
The group $PU(n,1)$ acts on $\B^n$ by biholomorphisms and isometries. The following proposition is easy to verify.
\begin{proposition}\label{ball-geodesic}
	For any $p \in \B^n$, there is a unique geodesic $\gamma_{0,p}$ from the origin to $p$, which is a straight line segment in $\C^n$.
\end{proposition}
 The distance between $x$ and $y$ is given by
 \begin{equation}
	\cosh \frac{d(x, y)}{2} = \frac{1 - |x \cdot y|^2}{\sqrt{1 - |x|^2} \sqrt{1 - |y|^2}}.
 \end{equation}

 \begin{proposition}\label{prop-dist-line}
	Let $p=(p_1,p')\in\B^n$ with $r^2=|p_1|^2+\|p'\|^2$. Let $D_1=\{z\in\B^n\big| z_j=0, j>1\}$ be the one-dimensional disk. The point $q_0\in D_1$ realizing the distance from $p$ to $D_1$ is
	\[
	q_0=(p_1,0),
	\]
	and the distance is given by
	\[
	\cosh^2\!\Big(\frac{d(p,L)}{2}\Big)=\frac{1-|p_1|^2}{1-r^2},
	\qquad
	d(p,L)=2\operatorname{arcosh}\!\Big(\sqrt{\frac{1-|p_1|^2}{1-r^2}}\Big).
	\]
	In particular, if $p_1=0$ then $\cosh^2\!\big(\tfrac{d(p,L)}{2}\big)=\dfrac{1}{1-r^2}$.
	\end{proposition}

	\begin{proof}
		For $q=(t,0)\in L$ with $|t|<1$ the ball distance formula gives
	\[
	\cosh^2\!\Big(\frac{d(p,q)}{2}\Big)=\frac{|1-p_1\overline{t}|^2}{(1-r^2)(1-|t|^2)}.
	\]
	Write $p_1=|p_1|e^{i\phi}$ and $t=\rho e^{i\psi}$. For fixed $\rho$ the numerator $|1-p_1\overline{t}|^2=1-2|p_1|\rho\cos(\phi+\psi)+|p_1|^2\rho^2$ is minimized when $\psi=-\phi$, so it suffices to consider real $\rho\in[0,1)$. Setting $f(\rho)=\dfrac{1-2|p_1|\rho+|p_1|^2\rho^2}{1-\rho^2}$ and differentiating yields the unique minimizer $\rho=|p_1|$. Hence $\overline{t}= \overline{p_1}$ and $t=p_1$, so $q_0=(p_1,0)$. Substituting $t=p_1$ gives
	\[
	\cosh^2\!\Big(\frac{d(p,q_0)}{2}\Big)=\frac{(1-|p_1|^2)^2}{(1-r^2)(1-|p_1|^2)}=\frac{1-|p_1|^2}{1-r^2},
	\]
	which yields the stated distance formula.
	\end{proof}

The Bergman space $\hcal_k$ on $\B^n$ consists of holomorphic functions that are $L^2$-integrable with respect to the Hermitian metric $h_B^k = (1 - |x|^2)^{2k}$ and the volume form above. The inner product is
\[
\langle f, g \rangle = \int_{\B^n} f \bar{g} (1 - |x|^2)^{2k} \frac{\omega_B^n}{n!}.
\]

It is easy to show that the Bergman kernel function on $\B^n$ is
\[
\rho_k(z) = \frac{1}{(4\pi)^n} \frac{(2k-1)!}{(2k-n-1)!}.
\]

In particular, the peak section at $0$ is the constant section $s_0(z) = \sqrt{\frac{1}{(4\pi)^n} \frac{(2k-1)!}{(2k-n-1)!}}$. The off-diagonal Bergman kernel at $(z, 0)$ is
\[
K_k(z, 0) = \frac{1}{(4\pi)^n} \frac{(2k-1)!}{(2k-n-1)!}.
\]
Thus,
\[
|K_k(z, 0)|_{h_B} = \frac{1}{(4\pi)^n} \frac{(2k-1)!}{(2k-n-1)!} (1 - |z|^2)^{2k}.
\]
\subsection{Siegel domain model}
Let $z = (z_1, \ldots, z_n)$ be the standard complex coordinates on $\C^n$, and set $z' = (z_1, \ldots, z_{n-1})$. The Siegel domain $\mathfrak{H}^n \subset \C^n$ is defined by
\[
2\Re z_n > |z'|^2.
\]
This domain provides another model for $\CH^n$, via the map $\mathfrak{H}^n \to \P \C^{n,1}$ given by
\begin{equation}\label{e-hn-chn}
	(z', z_n) \mapsto [z', \tfrac{1}{2} - z_n, \tfrac{1}{2} + z_n].
\end{equation}

The identification between the ball model $(\B^n, x)$ and the Siegel domain $(\mathfrak{H}^n, z)$ is given by the Cayley transform $\ccal_{HB} : \mathfrak{H}^n \to \B^n$:
\begin{align*}
	x_j &= \frac{2z_j}{1 + 2z_n}, \qquad & z_j = \frac{x_j}{1 + x_n} & (1 \leq j < n), \\
	x_n &= \frac{1 - 2z_n}{1 + 2z_n}, \qquad & z_n = \frac{1}{2} \frac{1 - x_n}{1 + x_n}.&
\end{align*}

The complex hyperbolic metric $\omega_H$ on $\mathfrak{H}^n$ is defined by $\omega_H = -\sqrt{-1} \partial \bar{\partial} \log h_H$, where $h_H = (2\Re z_n - |z'|^2)^2$. A straightforward computation yields
\begin{align*}
	\omega_H = \frac{2\sqrt{-1}}{(2\Re z_n - |z'|^2)^2} \Big[ & (2\Re z_n - |z'|^2) \sum_{j=1}^{n-1} dz_j \wedge d\bar{z}_j \\
	& + (dz_n - \sum_{j=1}^{n-1} \bar{z}_j dz_j) \wedge (d\bar{z}_n - \sum_{j=1}^{n-1} z_j d\bar{z}_j) \Big].
\end{align*}

The corresponding volume form is
\[
\frac{\omega_H^n}{n!} = \frac{(2\sqrt{-1})^n}{(2\Re z_n - |z'|^2)^{n+1}}\, dz_1 \wedge d\bar{z}_1 \cdots dz_n \wedge d\bar{z}_n,
\]
which can also be obtained by pulling back the volume form from the ball model.

The distance between two points $x = (x', x_n)$ and $y = (y', y_n)$ in $\mathfrak{H}^n$ is given by
\begin{equation}
	\cosh \frac{d(x, y)}{2} = \frac{|x_n + \bar{y}_n - \langle x', y' \rangle|}{\sqrt{2\Re x_n - |x'|^2}\, \sqrt{2\Re y_n - |y'|^2}}.
\end{equation}

\begin{lemma}\label{lem-dist-line}
	Let $R_n=\{(0,t):t\in\R_{>0}\}$ be the real line (so $w'=0$, $w_n=t\in\R_{>0}$).
For a fixed $p=(z',z_n)$, the distance from $p$ to the real line $R_n$ is given by
	\[
	d(p,R_n) = 2\operatorname{arccosh}\!\left(\sqrt{\frac{\big(|z_n|+\Re z_n\big)}{2\Re z_n-\sum_{j=1}^{n-1}|z_j|^2}}\right)\]
\end{lemma}
\begin{proof}
	Let $z_n=x+iy$.
Since \begin{align*}
	\cosh^2\frac{d(p,(0,t))}{2} &=\frac{|z_n+t|^2}{2\big(2\Re z_n-\sum_{j}|z_j|^2\big)\,t}\\
	&=\frac{(x+t)^2+y^2}{2\big(2x-\sum_{j}|z_j|^2\big)\,t}=\frac{t^2+2tx+x^2+y^2}{2\big(2x-\sum_{j}|z_j|^2\big)\,t},
\end{align*}
we are minimizing
\[
\frac{t^2+2tx+x^2+y^2}{t}=2x+t+\frac{x^2+y^2}{t}.
\]
Clearly, the minimizer is $t=|z_n|$ and the minimal value is
\[
\frac{\big(|z_n|+\Re z_n\big)}{2\Re z_n-\sum_{j=1}^{n-1}|z_j|^2}.
\]
Hence the distance from $p$ to $R_n$ is
\[
d\big(p,R_n\big)
=2\operatorname{arccosh}\!\left(\sqrt{\frac{\big(|z_n|+\Re z_n\big)}
{2\Re z_n-\sum_{j=1}^{n-1}|z_j|^2}}\right).
\]
\end{proof}
The following proposition is straightforward but handy for our later arguments.
\begin{proposition}\label{prop-hn-normal}
	Let $N_{R_n}$ denote the normal bundle of $R_n$ in $\mathfrak{H}^n$. Then the exponential map $\exp_{R_n/\mathfrak{H}^n}:N_{R_n}\to \mathfrak{H}^n$ is a bijection.
\end{proposition}
\begin{proof}
	Let $p=(z',z_n)\in\mathfrak{H}^n$. From the proof of Lemma \ref{lem-dist-line}, we see that the distance function from $p$ to $(0,t)$ has exactly one critical point, which is a minimum. So there is exactly one geodesic from $p$ to ${R_n}$ that is perpendicular to $D_1$. This shows that $\exp_{{R_n}/\mathfrak{H}^n}$ is bijective.

\end{proof}

\subsection{Elements of $PU(n,1)$}
We consider $U(n-1)$ as a subgroup of $PU(n,1)$ acting on $\mathfrak{H}^n$ by
\[(z', z_n) \mapsto (Uz', z_n), \quad U \in U(n-1).\]
Or in the form of matrices,
\[U\mapsto \begin{bmatrix}U & 0 & 0 \\ 0 & 1 & 0 \\ 0 & 0 & 1\end{bmatrix} \in PU(n,1).\]
For each $t\in \R$, let $\iota_t \in PU(n,1)$ be defined by
\[\iota_t: (z', z_n) \mapsto (e^{-t}z', e^{-2t}z_n ).\]
Or in the form of matrices,
\[\iota_t = \begin{bmatrix}I_{n-1} & 0 & 0 \\ 0 & \cosh (t) & \sinh (t) \\ 0 & \sinh (t) & \cosh (t)\end{bmatrix} \in PU(n,1).\]
Let $\mathfrak{N} \subset \P \C^{n,1}$ be the Heisenberg group consisting of elements of the form
\[\begin{bmatrix}I_{n-1} & v & v \\ -v^* & 1-\frac12(\|v\|^2-is) & -\frac12(\|v\|^2-is) \\ v^* & \frac12(\|v\|^2-is) &  1+\frac12(\|v\|^2-is)\end{bmatrix} \in PU(n,1),\]
for $(v,s) \in \C^{n-1} \times \R$. For simplicity, we denote such an element by $N(v,s)$, or $(v,s)$ for simplicity. 
The multiplication of $\mathfrak{N}$ is given by \[(v,s) \cdot (v',s') = (v + v', s + s' + 2\Im \langle v, v' \rangle).\]

The point at infinity $p_\infty$ in this model corresponds to the point $[0,-1,1] \in \P \C^{n,1}$. The stabilizer of $p_\infty$ in $PU(n,1)$ is the subgroup $\mathfrak{P} = \mathfrak{N} \rtimes (U(n-1) \times \{\iota_t\}_{t\in \R})$.

A parabolic element in $\mathfrak{P}$ is contained in $\mathfrak{N} \rtimes U(n-1)$.

\begin{theorem}\label{thm-parabolic-conjugacy}
	Any parabolic element in $\mathfrak{P}$ is conjugate to one of the following two types:
	\begin{itemize}
		\item \emph{Type I}: an element of the form $ N(0,s)U$ with $U\in U(n-1)$ and $s \neq 0$.
		\item \emph{Type II}: an element of the form $N(v,0)U $ with \[U=\begin{bmatrix}
	U'& 0 \\
	0 & I_m
  \end{bmatrix},\quad U'\in U(n-m-1),\]for some $m\geq 1$, and $ v=(a_1,\cdots,a_{n-1}) \text{ with } a_j=0\text{ for }j\leq n-m-1$.
	\end{itemize}
\end{theorem}

\begin{proof}
	Write elements $N(v,s)U$ of the stabiliser \(\mathfrak{N} \rtimes U(n-1)\) as triples
\((U,v,s)\) with \(U\in U(n-1)\) and \((v,s)\in \mathfrak{N}\cong\C^{\,n-1}\times\R\),
and use the identity \[U N(v,s) U^{-1}=N(Uv,s),\]
we get the multiplication law
\[
(U,v,s)\cdot(U',v',s')=(UU',\;v+U v',\; s+s'+2\Im\langle v, U v'\rangle).
\]
So the inverse is \((U,v,s)^{-1}=(U^{-1},-U^{-1}v,-s)\).  Hence for
\(H=(R,w,t)\in \mathfrak{P}\) the conjugation action on \(G=(U,v,s)\) is
\[
\begin{aligned}
&HGH^{-1}=(R,w,t)(U,v,s)(R^{-1},-R^{-1}w,-t)\\
&=\Big(RUR^{-1},\; Rv+(I-RUR^{-1})w,\; s
+2\Im\langle w,Rv\rangle-2\Im\langle v,UR^{-1}w\rangle\\ &\quad -2\Im\langle w,RUR^{-1}w\rangle\Big).
\end{aligned}
\]
So we have the following two cases:
\begin{enumerate}
  \item 1 is not an eigenvalue of \(U\). Then we let $R=I_{n-1}$, and $t=0$. Then, one can solve \((I_{n-1}-U)w=-v\) and conjugate \(G\) by \(H\) to \((U,0,s')\). 
  \item  \(U\) has \(1\)-eigenspace of positive dimension. We can first choose \(R\in U(n-1)\) such that 
  \[RUR^{-1}=\begin{bmatrix}
	U'& 0 \\
	0 & I_m
  \end{bmatrix},\] where $U'\in U(n-m-1)$ for some $m\geq 1$, and 1 is not an eigenvalue of \(U'\). So we can simply assume that \(U\) is in this block diagonal form. 
 Write
  \(\C^{\,n-1}=\ker(I_{n-1}-U)\oplus\operatorname{im}(I_{n-1}-U)\).  By choosing \(w\) one may kill the
  \(\operatorname{im}(I_{n-1}-U)\)-component of \(v\); the residual is the projection
  of \(v\) to \(\ker(I_{n-1}-U)\). So, we may assume that \(v\) lies in \(\ker(I_{n-1}-U)\). If $v=0$, this is case (1). If $v\neq 0$, we let $R=I_{n-1}$, $t=0$ and $w=\lambda v$ for some $\lambda\in i\R$ to be determined. Then we can solve $s+2\Im\langle \lambda v,v\rangle-2\Im\langle v,\lambda v\rangle=0$ to get $\lambda$ so that \[(I_{n-1},\lambda v,0)(U,v,s)(I_{n-1},\lambda v,0)^{-1}=(U,v,0). \]
  \end{enumerate}
\end{proof}

The following proposition is easy to see.
\begin{proposition}\label{prop-correspondece}
	Let $(X, \omega)$ be a manifold equipped with a complete complex hyperbolic metric, whose Kähler form is $\omega$.
	Let $\Gamma\subset\Aut(\B^n)$ be a Fuchsian group corresponding to a covering map $\pi:\B^n\to X$. For a fixed point $p\in X$, let $\tilde{p}$ be a lift of $p$ in $\B^n$. 
	Then, there exists a one to one correspondece between the geodesic loops based at $p$ and the set $\Gamma\backslash \{1\}$.
	More precisely, \begin{itemize}
		\item For each $g\in \Gamma\backslash \{1\}$, there exists a unique geodesic loop $\gamma_{\tilde{p},g}\in \mathfrak{G}_p$ such that the lift of $\gamma_{\tilde{p},g}$ starting at $\tilde{p}$ ends at $g(\tilde{p})$. Furthermore, the length of $\gamma_{\tilde{p},g}$ is given by the distance from $\tilde{p}$ to $\gamma_{\tilde{p},g}(\tilde{p})$.
		\item Conversely, for each geodesic loop $\gamma\in \mathfrak{G}_p$, there exists a unique lift
		$\tilde{\gamma}$ of $\gamma$ starting at $\tilde{p}$. So there is a unique element $g\in \Gamma\backslash \{1\}$ such that $g(\tilde{p})$ is the other end point of $\tilde{\gamma}$.
	\end{itemize}

\end{proposition}

\section{Disk model}\label{sec-disk}
\subsection{Estimates for the Bergman kernel using geodesic balls}

On the unit disk $\D$, the standard hyperbolic metric with Ricci curvature $-1$ is given by
\[
\omega_0 = \frac{2\sqrt{-1}\,dz\wedge d\bar{z}}{(1 - |z|^2)^2}.
\]
The Hermitian metric on the trivial line bundle is
\[
h_0 = (1 - |z|^2)^2.
\]
Fix $R \in (0,1)$, and let $\Delta_R = \{z \in \D : |z| < R\}$.
Consider the Bergman space $\mathcal{H}_{R,k}$ for $(\Delta_R, h_0^k, \omega_0)$.
Define
\[
\tilde{I}(a, k) = \int_{\Delta_R} |z|^{2a} (1 - |z|^2)^{2k-2} \omega_0,
\]
so that $\left\{\frac{z^a}{\sqrt{\tilde{I}(a, k)}}\right\}_{a \geq 0}$ forms an orthonormal basis of $\mathcal{H}_{R,k}$.
In particular, the Bergman kernel at the origin is
\[
\rho_{R, k}(0) = \frac{1}{\tilde{I}(0, k)}.
\]

\begin{proposition}\label{prop-prep}
	If $(1 - R^2)^{2k-2} < \frac{1}{2}$, then
	\[
	\rho_{R, k}(0) < \frac{k - 1/2}{\pi}.
	\]
\end{proposition}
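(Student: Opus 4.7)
The plan is a single explicit integration. The integrand $|z|^{2a}(1-|z|^2)^{2k-2}\omega_0$ defining $\tilde I(a,k)$ is rotationally symmetric, so switching to polar coordinates $z=re^{i\theta}$ on $\Delta_R$ collapses the angular integration to a factor $2\pi$ and reduces the problem to the one-variable integral $\int_0^R(1-r^2)^{2k-2}\,r\,dr$. The substitution $u=1-r^2$ turns this into $\tfrac12\int_{1-R^2}^1 u^{2k-2}\,du=\dfrac{1-(1-R^2)^{2k-1}}{2(2k-1)}$.

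Tracking through the overall multiplicative constant produced by $\omega_0$, this yields
\[
\tilde I(0,k)=\frac{4\pi\bigl(1-(1-R^2)^{2k-1}\bigr)}{2k-1},\qquad \rho_{R,k}(0)=\tilde I(0,k)^{-1}=\frac{2k-1}{4\pi\bigl(1-(1-R^2)^{2k-1}\bigr)}.
\]
The target bound $\rho_{R,k}(0)<(k-\tfrac12)/\pi=(2k-1)/(2\pi)$ is therefore equivalent to $(1-R^2)^{2k-1}<\tfrac12$.

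This last inequality follows at once from the hypothesis $(1-R^2)^{2k-2}<\tfrac12$ together with the trivial estimate $0<1-R^2<1$ (forced by $R<1$ and the hypothesis, which prevents $R=0$): indeed
\[
(1-R^2)^{2k-1}=(1-R^2)\cdot(1-R^2)^{2k-2}<1\cdot\tfrac12=\tfrac12.
\]
No serious obstacle is expected; the whole argument is a single beta-type integration. The only point requiring care is the bookkeeping of the volume-form normalization, so that the $u$-antiderivative produces the denominator $2k-1$ which then pairs precisely against the constant $k-\tfrac12$ in the target bound. Note that a slightly sharper hypothesis, namely $(1-R^2)^{2k-1}<\tfrac12$, would already suffice; the author's formulation trades a small amount of sharpness for a cleaner exponent matching the $(1-|z|^2)^{2k-2}$ appearing under the integral.
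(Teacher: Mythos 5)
Your proof is correct and takes essentially the same route as the paper: compute $\tilde I(0,k)$ by the polar/beta-type integration, take the reciprocal, and use $(1-R^2)^{2k-1}\le (1-R^2)^{2k-2}<\tfrac12$. In fact your antiderivative exponent is the accurate one, $\tilde I(0,k)=\frac{4\pi}{2k-1}\bigl(1-(1-R^2)^{2k-1}\bigr)$; the paper's displayed $1-(1-R^2)^{2k-2}$ is a harmless slip that does not affect the conclusion.
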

\begin{proof}
	We compute
	\begin{align*}
		\tilde{I}(0, k) = 4\pi \int_0^{R^2} (1 - x)^{2k-2} dx = \frac{4\pi}{2k - 1} \left(1 - (1 - R^2)^{2k-2}\right),
	\end{align*}
	so
	\[
	\rho_{R, k}(0) = \frac{k - 1/2}{2\pi} \left(1 - (1 - R^2)^{2k-2}\right)^{-1}.
	\]
	The result follows immediately.
\end{proof}

We use H\"ormander's $L^2$ estimate to construct global sections from local holomorphic functions. The following lemma is standard (see, for example, \cite{demailly1997complex}):

\begin{lemma}\label{lem-horm}
	Let $(X, \hat{\omega})$ be a complete K\"ahler manifold of complex dimension $n$, $\omega$ another K\"ahler form, and $E \to X$ a semi-positive vector bundle. Let $g \in L^2_{n,q}(X, E)$ satisfy $\dbar g = 0$ and
	\[
	\int_X \langle A_q^{-1} g, g \rangle\, dV < +\infty
	\]
	with respect to $\omega$, where $A_q$ denotes the operator $\sqrt{-1}\Theta(E)\wedge \Gamma$ (with $\Gamma$ the adjoint of wedging with $\omega$) in bidegree $(n, q)$ and $q \geq 1$.
	Then there exists $f \in L^2_{n, q-1}(X, E)$ such that $\dbar f = g$ and
	\[
	\|f\|^2 \leq \int_X \langle A_q^{-1} g, g \rangle\, dV.
	\]
\end{lemma}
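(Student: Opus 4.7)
The plan is to prove the inequality via the classical duality argument based on the Bochner-Kodaira-Nakano identity, so that solving $\dbar f = g$ becomes a matter of constructing and bounding a linear functional on the range of $\dbar^*$ and then invoking the Riesz representation theorem. Set $T = \dbar\colon L^2_{n,q-1}(X,E)\to L^2_{n,q}(X,E)$ and $S = \dbar\colon L^2_{n,q}(X,E)\to L^2_{n,q+1}(X,E)$, each viewed as a closed, densely defined operator with respect to the $L^2$ inner product determined by $\omega$.

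First I would establish the a priori Bochner-Kodaira-Nakano inequality
\[
\|T^*u\|^2 + \|Su\|^2 \;\ge\; \int_X \langle A_q u,u\rangle\, dV
\]
for every smooth compactly supported $E$-valued $(n,q)$-form $u$. This follows from the standard identity relating the $\dbar$- and $\partial$-Laplacians of $E$ to the curvature commutator $[\sqrt{-1}\Theta(E),\Gamma]$, together with the semi-positivity hypothesis on $E$, which makes the right-hand side nonnegative in bidegree $(n,q)$; compact support kills the boundary terms generated by integration by parts.

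Next I would extend this inequality to all $u\in\mathrm{Dom}(T^*)\cap\mathrm{Dom}(S)$. This is where completeness of $\hat\omega$ is genuinely used: using the $\hat\omega$-distance function one builds a sequence of compactly supported cutoffs $\chi_\nu$ with $|d\chi_\nu|_{\hat\omega}\to 0$ and shows that $\chi_\nu u\to u$ in the graph norm of both $T^*$ and $S$. Given the extended estimate, I would then decompose an arbitrary $u\in\mathrm{Dom}(T^*)$ as $u = u_1+u_2$ with $u_1\in\overline{\mathrm{Range}(T)}\subset\ker S$ and $u_2\perp\mathrm{Range}(T)$, so that $u_2\in\ker T^*$. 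Since $\dbar g=0$ we have $\langle u,g\rangle = \langle u_1,g\rangle$ and $Su_1=0$, and a pointwise Cauchy-Schwarz with $A_q$ as weight yields
\[
|\langle u,g\rangle|^2 \;\le\; \Big(\int_X\langle A_qu_1,u_1\rangle\, dV\Big)\Big(\int_X\langle A_q^{-1}g,g\rangle\, dV\Big) \;\le\; \|T^*u\|^2\int_X\langle A_q^{-1}g,g\rangle\, dV.
\]
Hence $T^*u\mapsto\langle u,g\rangle$ is a bounded linear functional on $\mathrm{Range}(T^*)$, and Hahn-Banach together with Riesz representation produces $f\in L^2_{n,q-1}(X,E)$ with $\|f\|^2\le\int_X\langle A_q^{-1}g,g\rangle\, dV$ and $\langle T^*u,f\rangle = \langle u,g\rangle$ for all $u\in\mathrm{Dom}(T^*)$; this is precisely the statement that $f\in\mathrm{Dom}(T)$ and $\dbar f=g$.

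The main obstacle is the density/cutoff step: one must verify carefully that smooth compactly supported forms are dense in $\mathrm{Dom}(T^*)\cap\mathrm{Dom}(S)$ with respect to the graph norm, which is where completeness of the auxiliary K\"ahler metric $\hat\omega$ (and not of $\omega$ itself) genuinely enters. A secondary subtlety is that $A_q$ is only semi-positive, so $A_q^{-1}g$ should be interpreted via the pseudo-inverse on $(\ker A_q)^\perp$; the finiteness of $\int\langle A_q^{-1}g,g\rangle\, dV$ forces $g$ to lie pointwise in $(\ker A_q)^\perp$ almost everywhere, which is what legitimates the Cauchy-Schwarz step.
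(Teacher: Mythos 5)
The paper offers no proof of this lemma at all (it is quoted as standard from Demailly), so the benchmark is the classical duality argument you are reconstructing, and your skeleton is exactly that argument: the a priori Bochner--Kodaira--Nakano inequality for smooth compactly supported $(n,q)$-forms, its extension to $\mathrm{Dom}(T^*)\cap\mathrm{Dom}(S)$, the decomposition $u=u_1+u_2$ with $u_1\in\overline{\mathrm{Range}(T)}\subset\ker S$ and $u_2\in\ker T^*$, the weighted Cauchy--Schwarz with $A_q$ (pseudo-inverse convention included), and Hahn--Banach/Riesz. All of those steps are correct.

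The genuine gap is in the cutoff/density step, precisely where you claim the completeness of $\hat{\omega}$ enters. Every norm in your argument --- the $L^2$ norms, the Hilbert-space adjoint $T^*$, and the graph norms of $T^*$ and $S$ --- is taken with respect to $\omega$, which the lemma does \emph{not} assume to be complete. The commutator errors $T^*(\chi_\nu u)-\chi_\nu T^*u$ and $S(\chi_\nu u)-\chi_\nu Su$ are pointwise controlled by $|d\chi_\nu|_{\omega}\,|u|_{\omega}$, so to get graph-norm convergence you need $|d\chi_\nu|_{\omega}$ small; cutoffs built from the $\hat{\omega}$-distance only give $|d\chi_\nu|_{\hat{\omega}}\to 0$, and since the dual norm satisfies $|d\chi_\nu|_{\omega}\ge C^{-1}|d\chi_\nu|_{\hat{\omega}}$ only when $\omega\le C\hat{\omega}$ (the useless direction), this says nothing about $|d\chi_\nu|_{\omega}$ unless $\omega$ dominates a multiple of $\hat{\omega}$ --- in which case $\omega$ would already be complete. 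So as written your proof establishes only the special case in which $\omega$ itself is complete. The standard repair, and the content of the two-metric formulation in Demailly, is to introduce the complete metrics $\omega_\epsilon=\omega+\epsilon\hat{\omega}$, run your entire argument for each $\omega_\epsilon$ (where the cutoff step is legitimate), invoke the monotonicity special to bidegree $(n,q)$ --- the quantity $\langle A_q^{-1}g,g\rangle\,dV$ computed with respect to $\omega_\epsilon$ is dominated by the one computed with respect to $\omega$ --- to obtain solutions $\dbar f_\epsilon=g$ with bounds uniform in $\epsilon$, and then extract a weak limit as $\epsilon\to 0$. I note that in the paper's actual application (Theorem \ref{thm-disk-model}) the estimate is used with the complete hyperbolic metric, i.e.\ effectively $\omega=\hat{\omega}$, so your argument covers that case; but as a proof of the lemma as stated it is missing this approximation-and-monotonicity step.
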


\begin{theorem}\label{thm-disk-model} 
	Let $(X,\omega)$ be a hyperbolic Riemann surface. Let $(L,h)\to X$ be a holomorphic Hermitian line bundle with $i\Theta_h=\omega$. Let $p\in X$ have injectivity radius $r(p)$. Then for $k\geq 2$ and $(\cosh \frac{r(p)}{2})^{2k-2}>7\sqrt{k}$, we have
	\[
	\left|\rho_k(p)-\frac{1}{2\pi}(k-\frac12)\right| < 14k^{3/2}e^{-2(k-1)\log\cosh \frac{r(p)}{2}}.
	\]
\end{theorem}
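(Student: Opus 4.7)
The plan is to compare the Bergman kernel at $p$ with the model value $(k-\tfrac12)/(2\pi)$ of the Poincar\'e disk by means of the local isometric identification provided by the exponential map. The map $\exp_p$ identifies the metric ball $B(p,r(p))\subset X$ with $\Delta_R\subset\D$, where $R=\tanh(r(p)/2)$, so that $1-R^2=\cosh^{-2}(r(p)/2)$. Because $B(p,r(p))$ is simply connected, $L$ admits a holomorphic trivialization on it in which $h^k$ pulls back to $h_0^k=(1-|z|^2)^{2k}$. The \emph{upper bound} is then immediate: any $s\in\hcal_k(X)$ with $\|s\|_X\le 1$ satisfies $\|s\|_{\Delta_R}\le 1$ after restriction, so by the explicit computation in Proposition~\ref{prop-prep},
\[
\rho_k(p)\;\le\;\rho_{R,k}(0)\;=\;\frac{k-1/2}{2\pi\bigl(1-(1-R^2)^{2k-1}\bigr)},
\]
yielding an error of order $k\cdot\cosh^{-(4k-2)}(r(p)/2)$, which is far smaller than the target.

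For the \emph{lower bound}, I would use the standard cut-and-correct H\"ormander argument to build a near-peak section. Take $s_0\equiv 1$ on $\Delta_R$, so that $|s_0(p)|_{h^k}^{2}=1$ and $\|s_0\|^2=\tilde I(0,k)\sim 2\pi/(2k-1)$. Fix a radial cutoff $\chi$ equal to $1$ on $B(p,r(p)-\ep_k)$ and vanishing outside $B(p,r(p))$, with transition width $\ep_k\asymp 1/k$. Solve $\dbar u=(\dbar\chi)s_0$ via Lemma~\ref{lem-horm} on $(X,\omega)$, working with $(L^k,h^k)$ twisted by a plurisubharmonic weight $e^{-\psi}$ of the form $\psi=c\,\chi_1\log|z|^2$ for some $c>1$, with $\chi_1$ a smooth cutoff supported in a small disk about $p$ disjoint from $\supp(\dbar\chi)$. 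The weight has Lelong number $2c>2$ at $p$, which forces every $L^2$-solution to vanish at $p$, while the smooth negative part of $\iddbar\psi$ is absorbed by $k\omega$ once $k\ge 2$. The section $\tilde s:=\chi s_0-u\in\hcal_k(X)$ is then holomorphic with $\tilde s(p)=1$.

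The key quantitative estimate is
\[
\|u\|^2\;\le\;\frac{1}{k}\int_{A_k}|\dbar\chi|^2_{\omega}\,|s_0|^2_{h^k}\,\omega,
\]
where $A_k$ is the annulus of hyperbolic width $\ep_k$ at the outer edge of $\Delta_R$. Using $|\dbar\chi|^2_{\omega}\lesssim\ep_k^{-2}$ and a direct polar-coordinate integration gives
\[
\|u\|^2\;\lesssim\;\frac{e^{(2k-1)\ep_k}}{k^{2}\,\ep_k^{2}}\;\cosh^{-(4k-2)}(r(p)/2),
\]
and choosing $\ep_k\asymp 1/k$ produces $\|u\|\lesssim\cosh^{-(2k-1)}(r(p)/2)$. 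Combined with $\|s_0\|^2\sim 2\pi/(2k-1)$, the inequality
\[
\rho_k(p)\;\ge\;\frac{|\tilde s(p)|^2}{\|\tilde s\|^2}\;\ge\;\frac{1}{(\|s_0\|+\|u\|)^{2}}\;\ge\;\frac{1}{\|s_0\|^{2}}-\frac{2\|u\|}{\|s_0\|^{3}}+O\!\left(\frac{\|u\|^{2}}{\|s_0\|^{4}}\right)
\]
yields a leading error of $2\|u\|/\|s_0\|^{3}\sim k^{3/2}\cosh^{-(2k-1)}(r(p)/2)\le k^{3/2}\cosh^{-(2k-2)}(r(p)/2)$, which together with the upper bound gives the claim after tracking the absolute constant.

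The main technical hurdle I anticipate is the construction of the singular weight $\psi$: it must be globally plurisubharmonic, have sufficient Lelong number at $p$ to force $u(p)=0$, be supported disjoint from $\supp(\dbar\chi)$ so that the source-term estimate is unaffected, and have the smooth negative correction to $\iddbar\psi$ absorbed into $k\omega$ without diluting the $1/k$ factor in H\"ormander's estimate. A secondary but routine issue is the careful optimization of $\ep_k$ and the book-keeping of numerical constants needed to recover the explicit factor $14$ and the threshold $7\sqrt{k}$ in the hypothesis.
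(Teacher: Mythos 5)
Your overall architecture (isometric identification with $\Delta_R$, a radial cutoff, and an H\"ormander $\dbar$-correction of the truncated frame) is the same as the paper's, and your order-of-magnitude bookkeeping ($\|u\|\lesssim\cosh^{-(2k-1)}(r(p)/2)$, error $\sim k^{3/2}\cosh^{-(2k-2)}(r(p)/2)$) comes out right. The genuine divergence, and the genuine gap, is in how you control the value of the correction term at $p$. You force $u(p)=0$ by twisting with a singular weight $\psi=c\,\chi_1\log|z|^2$, $c>1$, and assert that the smooth negative part of $\iddbar\psi$ "is absorbed by $k\omega$ once $k\ge2$". That absorption is not justified in the regime the theorem must cover. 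The hypothesis $(\cosh\frac{r(p)}{2})^{2k-2}>7\sqrt{k}$ allows $r(p)$ as small as roughly $2\sqrt{\log(7\sqrt k)/k}$, and $\chi_1$ must be supported in a disk of comparable (Euclidean) radius $\rho\lesssim r(p)$ to stay clear of $\supp(\dbar\chi)$. On the transition annulus the negative part of $\iddbar\psi$ has size of order $c\,\rho^{-2}\log(1/\rho)$, which for $\rho^2\sim(\log k)/k$ is of order $k$ — the same order as the available curvature $(k-1)\omega$ of $kL-K_X$. So the weight cannot simply be absorbed for $k\ge2$; at best one must redesign the cutoff (e.g.\ as a function of $\log|z|$, or take a wide logarithmic transition) and re-track how much of the curvature it consumes, and this is exactly where your explicit constants would be decided. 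As written, the factor $14$ and the threshold $7\sqrt k$ are not recovered, and for small injectivity radius the construction as stated can fail outright. (Minor related slips: the H\"ormander constant after twisting by $-K_X$ is $1/(k-1)$, not $1/k$, and it degrades further by whatever fraction of the curvature the singular weight consumes.)

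The paper avoids this entirely: it never forces the solution to vanish at $p$. Instead it observes that $v$ (your $u$) is holomorphic on the inner disk $\Delta_{R'}$, so its value at the center is controlled by the local extremal/Bergman bound of Proposition~\ref{prop-prep}, namely $|v(0)|^2\le\rho_{R',k}(0)\,\|v\|^2$; this pointwise bound, combined with an $S^1$-symmetry argument identifying the component of $\tilde s_p$ along the peak section, yields both the upper and lower bounds with clean explicit constants. Your upper bound by restriction monotonicity is fine (and slightly simpler than the paper's route to it), so the fix for your proposal is local: drop the singular weight and bound $|u(p)|$ by the sub-mean-value/local Bergman estimate exactly as in Proposition~\ref{prop-prep}, then your extremal lower bound $\rho_k(p)\ge|\tilde s(p)|_{h^k}^2/\|\tilde s\|^2$ goes through with constants you can actually track.
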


\begin{proof}
	There exists an isometry $G:\Delta_R\to X$, where $R=\tanh^{-1}(\frac{r(p)}{2})$, such that $G(0)=p$. Composing with $z\mapsto \bar{z}$ if necessary, we may assume $G$ is holomorphic. Thus $G^*\omega=\omega_0$. We still denote by $h$ the pull-back of $h$ to $G^*L$. Let $e_L$ be a frame of $G^*L$. Then $|e_L|_h^2=e^{-\varphi}$ for some smooth function $\varphi$ with $\iddbar \varphi=\omega_0$. Thus $u=\varphi-\varphi_0$ is harmonic, so $u=\Re f$ for some holomorphic function $f$. The new frame $e_L'=e^f e_L$ then satisfies
	\[
	|e_L'|_h^2 = h_0.
	\]
	For simplicity, we continue to use $e_L'$ to denote $(G^{-1})^*e_L'$ on $X$.

	For any $0<R'<R$, let $\epsilon=R-R'>0$. Pick a smooth cut-off function $\chi(|z|)$ such that:
	\begin{itemize}
		\item $\chi(|z|)=1$ for $|z|\leq R'$;
		\item $\chi(|z|)=0$ for $|z|\geq R$;
		\item $-\frac{2}{\epsilon}<\chi'\leq 0$.
	\end{itemize}
	Then $\dbar\chi=\chi'(|z|)\frac{z\,d\bar{z}}{2|z|}$. So
	\[
	|\dbar\chi\otimes (e_L')^{\otimes k}|_{h}^2 = (\chi')^2 (1-|z|^2)^{2k+2} \frac{1}{8}.
	\]
	Let $\alpha' = \sqrt{\frac{1}{2\pi}(k-\frac12)}\, \dbar\chi \otimes (e_L')^{\otimes k}$. Then $\alpha'$ is a smooth section of $\Omega^{0,1}_X\otimes (kL)$ satisfying
	\begin{align*}
		\int_X |\alpha'|_{h}^2\omega
		&\leq \left(\frac{2}{\epsilon}\right)^2 \frac{1}{8\pi}(k-\frac12) \int_{\Delta_R\setminus \Delta_{R'}} (1-|z|^2)^{2k} \idzdz \\
		&\leq \frac{1}{\epsilon^2} (k-\frac12) (1-R'^2)^{2k} \frac{R^2-R'^2}{2} \\
		&\leq \frac{1}{\epsilon} (k-\frac12) (1-R'^2)^{2k}.
	\end{align*}

Using the notations in Lemma \ref{lem-horm}, set $q=1$, $g = \alpha' = \sqrt{\frac{1}{2\pi}(k-\frac12)} (dz)^{-1} \otimes (e_L')^{\otimes k} dz \wedge \dbar\chi$, and $E = kL - K_X$. Thus, $i\Theta_E = (k-1)\omega$, so $A_1^{-1}\alpha' = \frac{1}{k-1}\alpha'$. By Lemma \ref{lem-horm}, we can solve $\dbar v = \alpha'$ and obtain a solution $v \in C^{\infty}(X, kL)$ satisfying
\[
\int_X |v|^2_h \omega \leq \frac{1}{\epsilon} \frac{k-\frac12}{k-1} (1-R'^2)^{2k} \leq \frac{2}{\epsilon} (1-R'^2)^{2k}.
\]
Define
\[
\tilde{s}_p = \sqrt{\frac{1}{2\pi}(k-\frac12)}\, \chi(|z|) (e_L')^{\otimes k} - v,
\]
which is a holomorphic section of $kL$ on $X$.

Since
\[
1 > \left\| \sqrt{\frac{1}{2\pi}(k-\frac12)}\, \chi(|z|) (e_L')^{\otimes k} \right\|^2 > 1 - \frac{1}{2\pi}(k-\frac12) \int_{\D \setminus \Delta_{R'}} (1 - |z|^2)^{2k} \omega_0,
\]
and 
\[
\frac{1}{2\pi}(k-\frac12) \int_{\D \setminus \Delta_{R'}} (1 - |z|^2)^{2k} \omega_0< (1-R'^2)^{2k-1},
\]
we have,
\[
\left| \left\| \sqrt{\frac{1}{2\pi}(k-\frac12)}\, \chi(|z|) (e_L')^{\otimes k} \right\| - 1 \right| < (1-R'^2)^{2k-1}.
\]
Therefore,
\[
\left| \|\tilde{s}_p\| - 1 \right| < (1-R'^2)^{2k-1} + \|v\|.
\]

For any $s \in \hcal_k$ with $s(p) = 0$, by $S^1$-symmetry, we have $\langle \tilde{s}_p, s \rangle_X = \langle -v, s \rangle_X$. Decompose $\tilde{s}_p = a s_p + s$, where $s_p$ is a peak section at $p$ and $s(p) = 0$. By multiplying by a suitable $e^{i\theta}$, we may assume $a > 0$. Then, $-\langle v, s \rangle_X = \langle \tilde{s}_p, s \rangle_X = \|s\|^2$, so $\|s\| \leq \|v\|$.

Since $a^2 + \|s\|^2 = \|\tilde{s}_p\|^2$, we have
\begin{align*}
|a^2 - 1| &\leq |\|\tilde{s}_p\|^2 - 1| + \|s\|^2 \\
&\leq \frac{5}{2} |\|\tilde{s}_p\| - 1| + \|s\|^2 \\
&\leq 3\left( (1-R'^2)^{2k-1} + \|v\| \right) \\
&\leq 3\left( (1-R'^2)^{2k-1} + \sqrt{\frac{2}{\epsilon}} (1-R'^2)^k \right).
\end{align*}
Thus,
\[
|a - 1| < 3\left( (1-R'^2)^{2k-1} + \sqrt{\frac{2}{\epsilon}} (1-R'^2)^k \right).
\]

If $s_p = f_p (e_L')^{\otimes k}$, then
\[
a f_p(0) = \sqrt{\frac{1}{2\pi}(k-\frac12)} - \frac{v}{(e_L')^{\otimes k}}(0).
\]
By Proposition \ref{prop-prep}, when $(1-R'^2)^{2k-2} < \frac{1}{2}$, we have
\[
\left| \frac{v}{(e_L')^{\otimes k}}(0) \right| < \sqrt{\frac{k-\frac12}{\epsilon}} (1-R'^2)^k.
\]
Therefore,
\begin{align*}
|f_p(0) - \sqrt{\frac{1}{2\pi}(k-\frac12)}| &< \sqrt{\frac{k-\frac12}{2\pi}} |a^{-1} - 1| + 2\sqrt{\frac{k-\frac12}{\epsilon}} (1-R'^2)^k \\
&< 6\sqrt{\frac{k-\frac12}{2\pi}} \left( (1-R'^2)^{2k-1} + \sqrt{\frac{2}{\epsilon}} (1-R'^2)^k \right) \\
&= \left( 1+ \sqrt{\frac{2}{\epsilon}} \right) 6\sqrt{\frac{k-\frac12}{2\pi}} (1-R'^2)^k.
\end{align*}

Now, pick $\epsilon$ so that $\frac{1-R'^2}{1-R^2} = e^{1/k}$. Thus, $\epsilon = R - \sqrt{1 - e^{1/k} + e^{1/k} R^2}$, and $(1-R'^2)^{2k-2} = e^{\frac{2k-2}{k}} (1-R^2)^{2k-2} < \frac{1}{2}$. Since $1 - e^{1/k}(1-R^2) < 1 - (1 + \frac{1}{k})(1-R^2) = R^2 - \frac{1-R^2}{k}$,
\begin{align*}
\epsilon &> R\left(1 - \left(1 - \frac{1-R^2}{kR^2}\right)^{1/2}\right) \\
&> R\left(1 - \left(1 - \frac{1-R^2}{2kR^2}\right)\right) = \frac{1-R^2}{2kR}.
\end{align*}

Therefore,
\begin{align}
|f_p(0) - \sqrt{\frac{1}{2\pi}(k-\frac12)}| &< \left( 1+ \sqrt{\frac{4kR}{1-R^2}} \right) 6\sqrt{\frac{k-\frac12}{2\pi}} (1-R^2)^k \\
&< 7k(1-R^2)^{k-1}. \label{e-fp}
\end{align}

Therefore, when $(1-R^2)^{k-1} < (7\sqrt{k})^{-1}$,
\[
|f_p^2(0) - \frac{1}{2\pi}(k-\frac12)| < 14k^{3/2} e^{-2(k-1)\log\cosh \frac{r(p)}{2}}.
\]
This completes the proof of the theorem.
\end{proof}
Another application of the argument in the proof of this theorem is the following off-diagonal estimate:
\begin{theorem}\label{thm-disk-kk}
Let $(X,\omega)$ be a hyperbolic Riemann surface, and $(L,h)\to X$ a holomorphic Hermitian line bundle with $i\Theta_h=\omega$. Let $p\in X$ have injectivity radius $r(p)$, and let $q\in X$ with $d(p,q)<\frac{r(p)}{2}$. Then, when $k\geq 2$ satisfies $(\cosh \frac{r(p)}{4})^{2k-2}>16k^{1/2}$, we have
\[
\left|\,|K_k(q,p)|_h - \frac{1}{2\pi}(k-\tfrac12) \cosh^{-2k}\frac{d(p,q)}{2}\,\right| < 16k^{3/2}\cosh^{2-2k}\left(\frac{r(p)}{2}\right).
\]
\end{theorem}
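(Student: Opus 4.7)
The plan is to adapt the construction from the proof of Theorem~\ref{thm-disk-model}. Using the same chart $G:\Delta_R\to X$ with $R=\tanh(r(p)/2)$ and the frame $e_L'$ satisfying $|e_L'|_h^2=h_0$, I identify $p$ with $0$ and $q$ with the point $z_q\in\Delta_R$ satisfying $|z_q|=\tanh(d(p,q)/2)<\tanh(r(p)/4)$. Let $s_p$ be the unit-norm peak section at $p$ from that proof. Completing $\{s_p\}$ to an orthonormal basis of $\hcal_k$ by sections vanishing at $p$ collapses the reproducing sum to $K_k(q,p)=\overline{s_p(p)}\otimes s_p(q)$, so
\[
|K_k(q,p)|_{h^k}=|s_p(p)|_{h^k}\cdot|s_p(q)|_{h^k}.
\]
The factor $|s_p(p)|_{h^k}=|f_p(0)|$ is already close to $\sqrt{(k-\tfrac12)/(2\pi)}$ by \eqref{e-fp}, so the whole proof reduces to showing that $|s_p(q)|_{h^k}$ is close to $\sqrt{(k-\tfrac12)/(2\pi)}\,(1-|z_q|^2)^k=\sqrt{(k-\tfrac12)/(2\pi)}\,\cosh^{-2k}\!\bigl(d(p,q)/2\bigr)$.

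From the decomposition $s_p=(\tilde{s}_p-s)/a$ used in the earlier proof, I choose the cutoff so that $|z_q|<R'<R$; then $\chi(|z_q|)=1$ and
\[
a\,s_p(q)\;=\;\sqrt{\tfrac{k-1/2}{2\pi}}\,(e_L')^{\otimes k}(q)\;-\;v(q)\;-\;s(q).
\]
The scalar $a$ and the $L^2$ norm $\|v\|$, hence $\|s\|\le\|v\|$, are already controlled in the proof of Theorem~\ref{thm-disk-model}. The two new ingredients needed are pointwise bounds for $|v(q)|_{h^k}=|F(z_q)|\,(1-|z_q|^2)^k$, with $F=v/(e_L')^{\otimes k}$ holomorphic on $\Delta_{R'}$, and for $|s(q)|_{h^k}$, each starting from the available $L^2$ control.

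For $|v(q)|_{h^k}$, I apply the M\"obius automorphism of $\D$ sending $z_q$ to $0$ and invoke Proposition~\ref{prop-prep} on a sub-disk of $\D$ of hyperbolic radius $r(p)/2$ around $z_q$ (Euclidean radius $\tanh(r(p)/4)$ after centering), which lies inside $\Delta_{R'}$ by the triangle inequality. The hypothesis $(\cosh(r(p)/4))^{2k-2}>16\sqrt{k}$ is precisely what is needed for Proposition~\ref{prop-prep} to apply there and to yield $|F(z_q)|(1-|z_q|^2)^k\le\sqrt{(k-1/2)/\pi}\,\|v\|$. For $|s(q)|_{h^k}$, the analogous argument runs directly on $X$: the ball of hyperbolic radius $r(p)/2$ around $q$ is contained in the injectivity ball of $q$ (since $r(q)\ge r(p)-d(p,q)>r(p)/2$), hence isometric to a sub-disk of $\D$, so pulling back $s$ and invoking Proposition~\ref{prop-prep} again yields $|s(q)|_{h^k}\le C\sqrt{k}\,\|v\|$. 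Combining these two pointwise bounds with the estimates on $|a-1|$ and $\|v\|$ from the earlier proof, and then multiplying by the control of $|s_p(p)|_{h^k}$, should produce the advertised error $16k^{3/2}\cosh^{2-2k}(r(p)/2)$, exactly paralleling the arithmetic leading to \eqref{e-fp}. The main obstacle I expect is the bound on $|s(q)|_{h^k}$: since $s$ lives globally on $X$ rather than on a chart around $p$, the sub-mean-value step has to be carried out at $q$ using $q$'s own injectivity radius, which is precisely why the hypothesis is phrased in terms of $r(p)/4$ rather than $r(p)/2$.
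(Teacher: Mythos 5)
Your proposal is correct and takes essentially the same route as the paper: the paper likewise collapses $K_k(q,p)$ to $s_p(q)\otimes\overline{s_p(p)}$, writes $a\,s_p=\tilde s_p-s=\sqrt{\tfrac{k-1/2}{2\pi}}\chi\,(e_L')^{\otimes k}-v-s$ from the proof of Theorem~\ref{thm-disk-model}, bounds $|v(q)|_h$ and $|s(q)|_h$ pointwise by $\sqrt{\tfrac{k-1/2}{\epsilon}}(1-R'^2)^k$ via exactly the sub-disk/Proposition~\ref{prop-prep} evaluation argument you describe, and then multiplies the two factors using \eqref{e-fp} and the identity $AB-CD=B(A-C)+C(B-D)$. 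Your write-up actually supplies more detail than the paper at the one terse step (evaluation at $q$); the only small slip is the claim that the hyperbolic ball of radius $r(p)/2$ about $q$ lies in $\Delta_{R'}$ ``by the triangle inequality''---it is only guaranteed to lie in $\Delta_R$, so one should shrink the ball slightly to stay in the region where $v$ is holomorphic, which costs nothing in the estimates.
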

\begin{proof}
We use the notations from the proof of Theorem~\ref{thm-disk-model}. By construction, $K_k(p,q) = s_p(q)\otimes \overline{s_p(p)}$, where $s_p$ is a peak section at $p$.

Recall that $s\in \hcal_k$ satisfies $s(p) = 0$ and $\tilde{s}_p = a s_p + s$.
By Theorem~\ref{thm-disk-model}, $|v(q)|_h$ and $|s(q)|_h$ are both bounded by $\sqrt{\frac{k-\frac12}{\epsilon}}(1-R'^2)^k$. Since $s_p = \frac{1}{a}(\tilde{s}_p - s) = \frac{1}{a}(\sqrt{\frac{k-1/2}{2\pi}} - v - s)$, we obtain
\[
\left|\,|s_p(q)|_h - \frac{1}{a}\sqrt{\frac{k-1/2}{2\pi}}(1-|z|^2)^k\,\right| < 4\sqrt{\frac{k-\frac12}{\epsilon}}(1-R'^2)^k < 6k(1-R^2)^{k-1/2}.
\]
Since $|a-1| < 4\frac{\sqrt{k}}{1-R^2}(1-R^2)^k$, it follows that
\[
\left|\,|s_p(q)|_h - \sqrt{\frac{k-1/2}{2\pi}}(1-|z|^2)^k\,\right| <  8k(1-R^2)^{k-1}.
\]
Similarly, by \eqref{e-fp},
\[
\left|\,|s_p(p)|_h - \sqrt{\frac{k-1/2}{2\pi}}\,\right| <  8k(1-R^2)^{k-1}.
\]
Using the equality $AB - CD = B(A-C) + C(B-D)$, we have
\[
\left|\,|s_p(p)|_h\,|s_p(q)|_h - \frac{k-1/2}{2\pi}(1-|z|^2)^k\,\right| < 16k^{3/2}(1-R^2)^{k-1}.
\]
Since $1-|z|^2 = \cosh^{-2}\frac{d(p,q)}{2}$, the result follows.
\end{proof}

\section{Hyperbolic cylinder model}\label{sec-hyper-cyl}

\subsection{Setting up}

The hyperbolic cylinder $\C_\eta^* \subset \C$ is defined as
\[
\C_\eta^* = \left\{ z \in \C \;\middle|\; -\frac{\pi}{2\eta} < \log|z| < \frac{\pi}{2\eta} \right\}
\]
equipped with the metric
\[
\omega_\eta = \frac{\eta^2 \sqrt{-1}\,dz \wedge d\bar{z}}{2 \cos^2(\eta t) |z|^2}
\]
where $t = \log|z|$. The Gaussian curvature of $\omega_\eta$ is $-1$. The central circle $\gamma = \{ |z| = 1 \}$ is a smooth geodesic loop of length $2\pi\eta$, and the injectivity radius at points on $\gamma$ is $\pi\eta$; at other points, it is strictly greater than $\pi\eta$.

The Hermitian line bundle under consideration is the trivial bundle, equipped with the Hermitian metric $h_\eta = \cos^2(\eta t)$.

Through each point $p \in \gamma$, there is a unique geodesic $\sigma_p(u)$ perpendicular to $\gamma$, parametrized by arc length $u$ with $\sigma_p(0) = p$ and $\frac{du}{dt} > 0$. A straightforward calculation shows that
\[
\cosh u = \frac{1}{\cos(\eta t)},
\]
so
\[
t = \frac{1}{\eta} \tan^{-1}(\sinh u), \qquad \frac{dt}{du} = \frac{1}{\eta \cosh u}.
\]

For each $a$, we have
\[
\int_{\C_\eta^*} |z|^{2a} h_\eta^{k+1} \omega_\eta = 2\pi \eta^2 \int_{-\pi/(2\eta)}^{\pi/(2\eta)} e^{2a t} \cos^{2k}(\eta t)\, dt.
\]
We define
\[
J(a, k+1, \eta) = \eta^2 \int_{-\pi/(2\eta)}^{\pi/(2\eta)} e^{2a t} \cos^{2k}(\eta t)\, dt.
\]

The Bergman kernel function at $\log|z|=t$ is then
\[
\rho_{k+1}(z) = \frac{\cos^{2k+2}(\eta t)}{2\pi} \sum_{a=-\infty}^{\infty} \frac{e^{a t}}{J(a, k+1, \eta)}.
\]
In particular, at $t = 0$ (i.e., $z = 1$),
\[
\rho_{k+1}(1) = \frac{1}{2\pi} \sum_{a=-\infty}^{\infty} \frac{1}{J(a, k+1, \eta)}.
\]

When $\eta = 1$, we have
\[
J(a, k+1, 1) = \int_{-\pi/2}^{\pi/2} e^{2a t} \cos^{2k} t\, dt.
\]
For simplicity, we will write $J_a = J(a, k+1, 1)$.

\begin{proposition}
For $a > 0$ and $k\geq 1$, we have
\begin{equation}
J_a > \frac{1}{4a}\left(e^{\frac{a}{\sqrt{k}}} - 1\right).
\end{equation}
\end{proposition}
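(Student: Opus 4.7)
The plan is to get the lower bound by restricting the integration to the sub-interval $[0, 1/(2\sqrt{k})]$, which is where $e^{2at}$ grows by the desired factor $e^{a/\sqrt{k}}$ and where $\cos^{2k}t$ is still bounded below by a fixed positive constant. Since $a > 0$, the integrand on the discarded set is non-negative, so dropping it only weakens the inequality. The key quantitative input will be that the length $1/(2\sqrt{k})$ of this interval is calibrated so that $t^2 \leq 1/(4k)$, which produces the right balance against the $2k$-th power in $\cos^{2k}t$.

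Concretely, I would proceed as follows. First, the standard elementary inequality $\cos t \geq 1 - t^2/2$ (valid for all real $t$ by looking at $g(t) = \cos t - 1 + t^2/2$ with $g(0)=g'(0)=0$ and $g'' \geq 0$) gives, on $[0, 1/(2\sqrt{k})]$,
\[
\cos t \;\geq\; 1 - \frac{1}{8k}, \qquad \text{hence} \qquad \cos^{2k} t \;\geq\; \Bigl(1 - \frac{1}{8k}\Bigr)^{2k}.
\]
Next I would check that $(1 - 1/(8k))^{2k} > 1/2$ for every integer $k \geq 1$. The quantity is increasing in $k$ (take logarithms and differentiate, or use the standard monotonicity of $(1-1/n)^n$), and at $k=1$ one has $(7/8)^2 = 49/64 > 1/2$; as $k \to \infty$ it tends to $e^{-1/4} \approx 0.779$. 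So the constant $1/2$ is safely below this value for all $k \geq 1$.

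With this pointwise lower bound in hand, a direct computation finishes the proof:
\[
J_a \;\geq\; \int_{0}^{1/(2\sqrt{k})} e^{2at}\cos^{2k}t\,dt \;\geq\; \cos^{2k}\!\Bigl(\tfrac{1}{2\sqrt{k}}\Bigr)\int_{0}^{1/(2\sqrt{k})} e^{2at}\,dt \;>\; \frac{1}{2}\cdot\frac{e^{a/\sqrt{k}}-1}{2a} \;=\; \frac{1}{4a}\bigl(e^{a/\sqrt{k}}-1\bigr).
\]
There is no real obstacle here: the argument is a one-shot restrict-and-estimate. The only slightly delicate point is the verification that $(1-1/(8k))^{2k} > 1/2$ holds uniformly from $k=1$ onward, but this is immediate from the $k=1$ base case together with monotonicity in $k$.
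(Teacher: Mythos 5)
Your proposal is correct and follows essentially the same route as the paper: restrict to $[0,\tfrac{1}{2\sqrt{k}}]$, bound $\cos^{2k}t \ge (1-\tfrac{1}{8k})^{2k} > \tfrac12$ there, and integrate $e^{2at}$ explicitly. The only difference is that you spell out the verification of $(1-\tfrac{1}{8k})^{2k}>\tfrac12$, which the paper states without detail.
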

\begin{proof}
When $|t| < \frac{1}{2\sqrt{k}}$, $\cos^{2k} t > (1 - \frac{1}{8k})^{2k} > \frac{1}{2}$. Thus, for $a > 0$,
\begin{align*}
J_a > \int_{0}^{\frac{1}{2\sqrt{k}}} e^{2a t} \cdot \frac{dt}{2} = \frac{1}{4a}\left(e^{\frac{a}{\sqrt{k}}} - 1\right).
\end{align*}
\end{proof}

Thus, $1/J_a$ decays exponentially. Since
\[
J(a, k+1, \eta) = \eta \int_{-\pi/2}^{\pi/2} e^{2a t/\eta} \cos^{2k} t\, dt = \eta J_{a/\eta},
\]
it follows that $1/J(a, k+1, \eta)$, as a function of $a$, also decays exponentially.

Applying the Poisson summation formula, we obtain
\[
\sum_{a=-\infty}^{\infty} \frac{1}{J(a, k+1, \eta)} = \sum_{\xi=-\infty}^{\infty} F(\xi, \eta),
\]
where
\[
F(\xi, \eta) = \int_{-\infty}^{\infty} e^{-2\pi i \xi a} \frac{da}{J(a, k+1, \eta)}.
\]
Since $J(a, k+1, \eta)$ is an even function of $a$, it follows that 
\[
F(\xi, \eta) = \int_{-\infty}^{\infty} \cos(2\pi \xi a) \frac{da}{J(a, k+1, \eta)}
\]
is an even function of $\xi$.

Moreover,
\begin{align*}
F(\xi, \eta) &= \int_{-\infty}^{\infty} e^{-2\pi i \eta \xi \frac{a}{\eta}} \frac{d(a/\eta)}{J_{a/\eta}} \\
&= \int_{-\infty}^{\infty} e^{-2\pi i \eta \xi a} \frac{da}{J_a} = F(\eta \xi, 1).
\end{align*}
For simplicity, we write $F(\xi) = F(\xi, 1)$. Therefore,
\begin{equation}
\sum_{a=-\infty}^{\infty} \frac{1}{J(a, k+1, \eta)} = \sum_{\xi=-\infty}^{\infty} F(\eta \xi).
\end{equation}

\begin{theorem}\label{thm-F0}
For $k\geq 1$, we have $F(0) = k + \frac{1}{2}$.
\end{theorem}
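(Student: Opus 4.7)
The plan is to compute $F(0)=\int_{-\infty}^{\infty}da/J_a$ by writing the quantity $\sum_{a\in\Z}1/J(a,k+1,\eta)$ in two different ways---as a Riemann sum approximating $F(0)$ and as $2\pi$ times the Bergman kernel of the hyperbolic cylinder at the central circle---and letting $\eta\to\infty$.

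First I would use the scaling $J(a,k+1,\eta)=\eta J_{a/\eta}$ recorded earlier in the section to rewrite
\[
\sum_{a\in\Z}\frac{1}{J(a,k+1,\eta)}=\frac{1}{\eta}\sum_{a\in\Z}\frac{1}{J_{a/\eta}},
\]
which is the Riemann sum of $f(x)=1/J_x$ with mesh $1/\eta$. The function $f$ is continuous, even, non-increasing on $[0,\infty)$ (since differentiating under the integral and splitting at $t=0$ gives $J_a'=\int_0^{\pi/2}4t\sinh(2at)\cos^{2k}t\,dt\geq 0$ for $a\geq 0$), and exponentially integrable (the elementary bound $J_a>(e^{a/\sqrt{k}}-1)/(4a)$ for $a>0$ proved just above the theorem extends by evenness to all $a\neq 0$). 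Hence standard Riemann-sum convergence for a continuous integrable function with a monotone decreasing integrable majorant at infinity gives
\[
\lim_{\eta\to\infty}\frac{1}{\eta}\sum_{a\in\Z}\frac{1}{J_{a/\eta}}=\int_{-\infty}^{\infty}\frac{da}{J_a}=F(0).
\]

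Next I would interpret the same quantity as $2\pi\rho_{k+1}(1)$, where $\rho_{k+1}$ is the Bergman kernel function of the hyperbolic cylinder $(\C_\eta^*,\omega_\eta)$ equipped with the trivial Hermitian line bundle $(\C\times\C_\eta^*,h_\eta)$ (which satisfies $i\Theta_{h_\eta}=\omega_\eta$, as can be verified by direct computation). The injectivity radius at $z=1$ on the central circle is exactly $\pi\eta$, so Theorem \ref{thm-disk-model} applied with weight $k+1\geq 2$ and $r(p)=\pi\eta$ yields
\[
\Bigl|\rho_{k+1}(1)-\tfrac{1}{2\pi}(k+\tfrac12)\Bigr|<14(k+1)^{3/2}e^{-2k\log\cosh(\pi\eta/2)}\longrightarrow 0,
\]
so the same sum also tends to $k+\tfrac12$.

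Equating the two limits forces $F(0)=k+\tfrac12$. The main technical point is the Riemann-sum convergence, which is controlled by the exponentially decaying monotone majorant for $1/J_\cdot$; a subsidiary check is that Theorem \ref{thm-disk-model} genuinely applies to the non-compact, infinite-volume cylinder, which it does since the statement is valid on any complete hyperbolic Riemann surface carrying a Hermitian line bundle of the required curvature.
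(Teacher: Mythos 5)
Your proposal is correct. The second half of your argument is exactly the paper's: both identify $\frac{1}{2\pi}\sum_{a\in\Z}1/J(a,k+1,\eta)$ with the cylinder Bergman kernel $\rho_{\eta,k+1}(1)$ at the central circle, apply Theorem~\ref{thm-disk-model} with injectivity radius $\pi\eta$ (and weight $k+1\geq 2$), and let $\eta\to\infty$. Where you genuinely diverge is in showing that this same sum tends to $F(0)$: the paper invokes the Poisson summation formula, $\sum_a 1/J(a,k+1,\eta)=\sum_\xi F(\eta\xi)$, and then kills the $\xi\neq 0$ terms using the a priori bound $|F(\xi)|\leq \frac{3}{4\xi^2}F(0)$ obtained by two integrations by parts (using $|\dot J_a/J_a|<\pi$, $|\ddot J_a/J_a|<\pi^2$); you instead observe that $\frac1\eta\sum_a 1/J_{a/\eta}$ is a Riemann sum for $\int_\R da/J_a=F(0)$ and prove its convergence by elementary sum--integral comparison, using that $1/J_x$ is even, continuous, non-increasing on $[0,\infty)$ (your computation $\dot J_a=\int_0^{\pi/2}4t\sinh(2at)\cos^{2k}t\,dt\geq0$ is right) and exponentially decaying by the bound $J_a>\frac{1}{4a}(e^{a/\sqrt k}-1)$. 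Your route avoids Fourier analysis entirely for this step and is more self-contained; the paper's route costs little in context because Poisson summation and the decay of $F$ are needed anyway for the subsequent exact formula (Theorem~\ref{thm-f-b}) and the cylinder computation, whereas your Riemann-sum lemma serves only this one statement. Your subsidiary checks (that Theorem~\ref{thm-disk-model} applies to the complete, infinite-volume cylinder, and that the hypothesis $(\cosh\frac{\pi\eta}{2})^{2k}>7\sqrt{k+1}$ holds for large $\eta$) are the right ones and are satisfied.
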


We now prove this theorem.

First, we establish the following estimate:

\begin{proposition}
For $\xi \neq 0$,
\begin{equation}
|F(\xi)| \leq \frac{3}{4\xi^2} F(0).
\end{equation}
\end{proposition}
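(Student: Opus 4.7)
The plan is to prove this Fourier decay estimate by integrating by parts twice in the integral defining $F(\xi)$, which converts the oscillatory factor $e^{-2\pi i \xi a}$ into $1/\xi^2$ at the cost of differentiating $1/J_a$ twice, and then bounding $|(1/J_a)''|$ pointwise by a multiple of $1/J_a$.

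First I would verify that the boundary terms vanish so that two integrations by parts are legitimate. The previous proposition gives $J_a > (e^{a/\sqrt{k}}-1)/(4a)$ for $a>0$, and since $J_a=J_{-a}$ the function $1/J_a$ decays exponentially as $|a|\to\infty$; the same kind of decay holds for its derivatives once the next step is in place. Two integrations by parts then yield
\[
F(\xi) \;=\; -\,\frac{1}{4\pi^2\xi^2}\int_{-\infty}^{\infty} e^{-2\pi i\xi a}\Bigl(\frac{1}{J_a}\Bigr)''\,da.
\]

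Next I would establish the pointwise bound $|(1/J_a)''|\le 3\pi^2/J_a$. A direct computation gives
\[
\Bigl(\frac{1}{J_a}\Bigr)'' \;=\; -\,\frac{J_a''}{J_a^2}+\frac{2(J_a')^2}{J_a^3}.
\]
The key observation is that $J_a'/J_a$ and $J_a''/J_a$ are the first and second moments of the function $2t$ against the probability measure $e^{2at}\cos^{2k}t\,dt/J_a$ supported on $[-\pi/2,\pi/2]$. Because $|2t|\le \pi$ on that interval, we obtain both $J_a''/J_a\le \pi^2$ and $(J_a'/J_a)^2\le \pi^2$, so the triangle inequality gives the claimed bound.

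Combining the two pieces yields
\[
|F(\xi)| \;\le\; \frac{1}{4\pi^2\xi^2}\int_{-\infty}^{\infty}\Bigl|\Bigl(\frac{1}{J_a}\Bigr)''\Bigr|\,da \;\le\; \frac{3\pi^2}{4\pi^2\xi^2}\int_{-\infty}^{\infty}\frac{da}{J_a} \;=\; \frac{3}{4\xi^2}\,F(0),
\]
which is the desired estimate. I do not foresee any real obstacle: the only nontrivial ingredient is the probabilistic interpretation of $J_a'$ and $J_a''$, and this is immediate from differentiating under the integral sign, while the boundary-term justification is straightforward from the exponential decay of $1/J_a$.
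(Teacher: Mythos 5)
Your proposal is correct and follows essentially the same route as the paper: two integrations by parts to gain the factor $1/\xi^2$, followed by the pointwise bound $\bigl|(1/J_a)''\bigr|\le 3\pi^2/J_a$ obtained from the moment estimates $|\dot J_a/J_a|\le\pi$ and $\ddot J_a/J_a\le\pi^2$ for the measure $e^{2at}\cos^{2k}t\,dt$ on $[-\pi/2,\pi/2]$. Your explicit verification of the vanishing boundary terms is a detail the paper leaves implicit, but the argument is the same.
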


\begin{proof}
Let $\dot{J}_a \triangleq \frac{\partial J_a}{\partial a} = \int_{-\pi/2}^{\pi/2} 2t\, e^{2a t} \cos^{2k} t\, dt$. Clearly, $|\frac{\dot{J}_a}{J_a}| < \pi$. Similarly, $|\frac{\ddot{J}_a}{J_a}| < \pi^2$. For $\xi > 0$, integration by parts shows
\begin{align*}
F(\xi) &= \frac{1}{2\pi\xi} \int_{-\infty}^{\infty} \sin(2\pi\xi a) \frac{\dot{J}_a}{J_a^2} da \\
&= \frac{1}{(2\pi\xi)^2} \int_{-\infty}^{\infty} \cos(2\pi\xi a) \left( \frac{\ddot{J}_a}{J_a^2} - 2\frac{\dot{J}_a^2}{J_a^3} \right) da.
\end{align*}
Therefore,
\[
|F(\xi)| \leq \frac{1}{(2\pi\xi)^2} \cdot 3\pi^2 \int_{-\infty}^{\infty} \frac{da}{J_a} = \frac{3}{4\xi^2} F(0).
\]
\end{proof}

As a direct corollary, $\lim_{\eta \to +\infty} \sum_{|\xi| \geq 1} |F(\eta \xi)| = 0$. Therefore,
\[
\lim_{\eta \to +\infty} \rho_{\eta, k+1}(1) = \frac{1}{2\pi} F(0).
\]
On the other hand, since the injectivity radius of $1 \in \C^*_\eta$ is $\pi\eta$, by Theorem~\ref{thm-disk-model},
\[
\lim_{\eta \to +\infty} \rho_{\eta, k+1}(1) = \frac{1}{2\pi} \left(k + \frac{1}{2}\right).
\]
Thus, we have proved Theorem~\ref{thm-F0}.

\subsection{Main result}
The main result of this section is the following theorem:
\begin{theorem}\label{thm-f-b}
	For the Fourier transform of $\frac{1}{J(a,k+1,1)}$, for $k\geq 1$, we have
	\[
	F(b) = \left(k+\frac{1}{2}\right) \cosh^{-2(k+1)}(\pi b) \quad \text{for any } b \text{ such that }|\Im b| < \frac12.
	\]
\end{theorem}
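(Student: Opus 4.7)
The plan is to compute $F(b)$ directly as a Fourier transform after finding a closed-form expression for $J_a$. The strategy has three algebraic steps plus an analytic-continuation step.

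I would first derive a closed form for $J_a^{(k)} \eqd \int_{-\pi/2}^{\pi/2} e^{2at}\cos^{2k}t\,dt$ by a recursion in $k$. Integrating $\frac{d}{dt}(e^{2at}\cos^{2k}t)$ over $[-\pi/2,\pi/2]$ (the boundary contributions vanish for $k\geq 1$) yields the identity $a J^{(k)}_a = k\int e^{2at}\cos^{2k-1}t\sin t\,dt$. A second integration by parts applied to $\frac{d}{dt}(e^{2at}\cos^{2k-1}t\sin t)$ expresses the same cross integral in terms of $J^{(k)}_a$ and $J^{(k-1)}_a$; eliminating the cross term yields the two-term recursion
\[
J^{(k)}_a \;=\; \frac{k(2k-1)}{2(k^2+a^2)}\, J^{(k-1)}_a,
\]
and iterating from $J^{(0)}_a = \sinh(\pi a)/a$ produces
\[
J_a \;=\; \frac{(2k)!\,\sinh(\pi a)}{4^k\, a\,\prod_{j=1}^k(j^2+a^2)}.
\]

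Next I would rewrite $1/J_a$ using gamma functions. The reflection identity $\Gamma(1+ia)\Gamma(1-ia) = \pi a/\sinh(\pi a)$ combined with the telescoping product $\prod_{j=1}^k(j^2+a^2) = \Gamma(k+1+ia)\Gamma(k+1-ia)/[\Gamma(1+ia)\Gamma(1-ia)]$ collapses the expression to
\[
\frac{1}{J_a} \;=\; \frac{4^k}{\pi\,(2k)!}\,\Gamma(k+1+ia)\,\Gamma(k+1-ia).
\]
The final computational ingredient is the classical Ramanujan-type Fourier identity
\[
\int_{-\infty}^\infty \frac{e^{2ivt}}{\cosh^{2\sigma}v}\,dv \;=\; \frac{2^{2\sigma-1}\,\Gamma(\sigma+it)\,\Gamma(\sigma-it)}{\Gamma(2\sigma)},
\]
obtained by the substitution $u = (1+\tanh v)/2$ in $B(\sigma+it,\sigma-it)$; Fourier inversion yields the dual form
\[
\int_{-\infty}^\infty \Gamma(\sigma+ia)\Gamma(\sigma-ia)\,e^{-2iva}\,da \;=\; \frac{\pi\,\Gamma(2\sigma)}{2^{2\sigma-1}\,\cosh^{2\sigma}(v)}.
\]
Taking $\sigma = k+1$ and $v = \pi b$ and combining with the formula for $1/J_a$, the factorials collapse and one reads off $F(b) = (k+\tfrac12)\cosh^{-2(k+1)}(\pi b)$ for real $b$.

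The defining integral for $F(b)$ converges absolutely precisely when $|\Im b| < 1/2$ (matching the rate $|1/J_a|\sim |a|^{2k+1}e^{-\pi|a|}$ as $|a|\to\infty$), and both sides of the identity are holomorphic on this strip, so equality extends from $\R$ by analytic continuation. The main technical obstacle is the integration-by-parts step of Step~1: the clean recursion only emerges after two successive integrations by parts combined with the careful elimination of the cross-term $\int e^{2at}\cos^{2k-1}t\sin t\,dt$, with close attention to powers of two and factorials propagating through Step~3. A reassuring consistency check is that substituting $b=0$ recovers $F(0) = k+\tfrac12$, matching Theorem~\ref{thm-F0}.
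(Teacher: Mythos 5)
Your proposal is correct, and I checked the key identities: the two-step integration by parts does give $J^{(k)}_a=\frac{k(2k-1)}{2(k^2+a^2)}J^{(k-1)}_a$ with vanishing boundary terms for $k\ge 1$, the closed form $J_a=\frac{(2k)!\sinh(\pi a)}{4^k a\prod_{j=1}^k(j^2+a^2)}$ checks out (e.g.\ $k=1$ gives $\frac{\sinh(\pi a)}{2a(1+a^2)}$), the reflection/telescoping step gives $1/J_a=\frac{4^k}{\pi(2k)!}\Gamma(k+1+ia)\Gamma(k+1-ia)$, and the beta-integral pair with $\sigma=k+1$, $v=\pi b$ collapses the constants to exactly $k+\tfrac12$; the decay $|1/J_a|\asymp |a|^{2k+1}e^{-\pi|a|}$ justifies holomorphy of the integral in $|\Im b|<\tfrac12$ and the extension by the identity theorem. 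However, your route is genuinely different from the paper's. The paper never computes $J_a$ in closed form (indeed it cites the ``absence of a direct formula for the Fourier transform of $1/J$'' as the main difficulty): it builds the peak section on the twisted hyperbolic cylinder, applies Poisson summation, and lets the cylinder parameter $\eta\to\infty$, using the off-diagonal disk-model estimate (Theorem~\ref{thm-disk-kk}) to identify the limit and the exponential-decay result (Theorem~\ref{thm-exp-decay}, itself resting on Paley--Wiener and an inclusion--exclusion over square-free integers) to kill the tail; this first pins down $|F(b)|$ for real $b$, then positivity of $F(0)$ and nonvanishing remove the modulus, and log-convexity of $J_a$ with asymptotic slopes $\pm\pi$ gives holomorphy in the strip. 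Your argument is purely classical analysis, shorter and self-contained, and gives strictly more: the explicit Gamma-product formula for $J_a$ immediately re-proves Theorem~\ref{thm-jak} without Paley--Wiener, since the zeros of $J$ sit at the poles of $\Gamma(k+1\pm ia)$, i.e.\ at $|\Im a|\ge k+1$. What the paper's proof buys instead is independence from any closed-form evaluation and a geometric interpretation of $F$ via Bergman kernels on cylinders, which is the mechanism reused in the proof of the main theorem; but as a proof of Theorem~\ref{thm-f-b} itself, your computation is complete and arguably cleaner.
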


As a direct corollary, we can prove Theorem \ref{thm-jak}:

\begin{proof}[Proof of Theorem \ref{thm-jak}]
	By Theorem \ref{thm-f-b}, $e^{|\lambda\xi|}F(\xi)\in L^2(\R)$ for any $|\lambda|<k+1$. By the Paley-Wiener theorem, $\frac{1}{J(a,k+1,1)}$ extends holomorphically to the strip $\{z\in\C: |\Im a|<k+1\}$. Thus, $J(a,k+1,1)$ has no zeros for $|\Im a|<k+1$.
\end{proof}

The following two subsections are devoted to the proof of this main result.
\subsection{Exponential Decay} 
Denote $d(\eta) = 2\log(\cosh(\frac{\pi\eta}{2}))$.

\begin{theorem}\label{thm-exp-decay}
	When $k\geq 1 $ and $\eta>0$ satisfies $k\tanh(\pi\eta) \geq 1/2$ and \[(\cosh(\pi\eta))^{2k}>7\sqrt{k+1},\] we have
	\[
	|F(\eta)| < 7(k+1)^2\left(e^{-d(\eta)k} + 3e^{-d(2\eta)k}\right).
	\]
\end{theorem}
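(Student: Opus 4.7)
The plan is to apply Theorem~\ref{thm-disk-model} to the family of enlarged hyperbolic cylinders $\C^*_{m\eta}$ for $m=1,2,3,\ldots$ and to combine the resulting estimates, via a M\"obius-type inversion, to extract $F(\eta)$ itself. The central geodesic of $\C^*_{m\eta}$ passes through $z=1$ with injectivity radius $m\pi\eta$, so the disk-model estimate of Section~\ref{sec-disk} applies on each such cylinder; the hypotheses of the theorem directly guarantee this for $m=1,2$, while for $m\geq 3$ it follows automatically since $\cosh$ is increasing.

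First, on each $\C^*_{m\eta}$, the Poisson summation identity developed earlier in this section gives
\[
2\pi\rho_{m\eta,k+1}(1)=F(0)+2T_m,\qquad T_m\eqd\sum_{\xi\geq 1}F(m\eta\xi),
\]
and Theorem~\ref{thm-F0} identifies $F(0)=k+\tfrac12$. Invoking Theorem~\ref{thm-disk-model} (with $k$ replaced by $k+1$) then yields
\[
|T_m|<14\pi(k+1)^{3/2}e^{-d(m\eta)k},\qquad m\geq 1.
\]

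Next, rewriting $T_m=\sum_{n\geq 1,\,m\mid n}F(n\eta)$ and using $\sum_{d\mid n}\mu(d)=[n=1]$ gives the exact M\"obius identity $F(\eta)=\sum_{m\geq 1}\mu(m)T_m$. Applying the triangle inequality and summing,
\[
|F(\eta)|\leq 14\pi(k+1)^{3/2}\Bigl(e^{-d(\eta)k}+\sum_{m\geq 2}e^{-d(m\eta)k}\Bigr).
\]
Because $d(m\eta)=2\log\cosh(m\pi\eta/2)$ grows essentially linearly in $m$, the tail $\sum_{m\geq 2}e^{-d(m\eta)k}$ is geometric; under the hypothesis $(\cosh\pi\eta)^{2k}>7\sqrt{k+1}$ it is bounded by $2e^{-d(2\eta)k}$, and the factor $3=1+2$ in front of $e^{-d(2\eta)k}$ in the statement corresponds to the $m=2$ contribution plus the $m\geq 3$ geometric tail.

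The principal obstacle I anticipate is the final bookkeeping on constants. The triangle-inequality bound above produces a prefactor of order $14\pi(k+1)^{3/2}$, whereas the stated form is $7(k+1)^2$; the extra $(k+1)^{1/2}$ has to be absorbed using the hypothesis $(\cosh\pi\eta)^{2k}>7\sqrt{k+1}$, which supplies exactly a matching $\sqrt{k+1}$ of gain, or else by exploiting cancellations among the alternating M\"obius signs. A secondary, routine but necessary, verification is that the conditions of Theorem~\ref{thm-disk-model} hold uniformly on every cylinder $\C^*_{m\eta}$ under the two stated hypotheses on $\eta$ and $k$.
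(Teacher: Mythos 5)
Your proposal follows essentially the same route as the paper's own proof: enlarge the cylinder to $\C^*_{m\eta}$, use Poisson summation together with Theorems~\ref{thm-disk-model} and~\ref{thm-F0} to control $\sum_{n\in\mathbb{Z}^*}F(nm\eta)$, isolate $F(\eta)$ by M\"obius inversion, and then estimate the tail $\sum_{m\ge 2}e^{-d(m\eta)k}$. The paper's alternating sum over square-free integers, with sign $(-1)^{\phi(\lambda)-1}=-\mu(\lambda)$, is exactly your identity $F(\eta)=\sum_{m\ge1}\mu(m)T_m$, so the core mechanism is identical.

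Two remarks on the comparison. First, the one ingredient of the paper's argument your sketch omits is the justification of the regrouping: the paper's Step~1 (Proposition~\ref{prop-exp-decay}, proved via the Paley--Wiener theorem) shows $e^{\delta|\xi|}F(\xi)$ is bounded precisely so that the double series can be rearranged absolutely. Your interchange $\sum_m\mu(m)\sum_{m\mid n}F(n\eta)=\sum_n F(n\eta)\sum_{m\mid n}\mu(m)$ needs $\sum_n d(n)\,|F(n\eta)|<\infty$, where $d(n)$ is the divisor count; in your formulation this is fillable even from the earlier bound $|F(\xi)|\le \frac{3}{4\xi^2}F(0)$, but it must be stated. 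Second, on constants: the hypothesis $(\cosh\pi\eta)^{2k}>7\sqrt{k+1}$ cannot ``absorb'' the factor $2\pi/(k+1)^{1/2}$ in your prefactor $14\pi(k+1)^{3/2}$ --- it only makes $e^{-d(\eta)k}$ small in absolute terms, which does not change the ratio of your bound to the stated one; note, however, that the paper performs the same silent replacement when it asserts $\bigl|\sum_{n\in\mathbb{Z}^*}F(n\lambda\eta)\bigr|\le 14(k+1)^2e^{-d(\lambda\eta)k}$ (the disk-model estimate literally gives $28\pi(k+1)^{3/2}$), so your accounting exposes a looseness shared with the paper rather than a defect of your route. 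Finally, the roles of the two hypotheses are swapped in your write-up: in the paper the bound $\sum_{\lambda\ge2}e^{-d(\lambda\eta)k}\le 3e^{-d(2\eta)k}$ comes from Lemma~\ref{lem-concave} together with $k\tanh(\pi\eta)\ge\tfrac12$, while the condition $(\cosh\pi\eta)^{2k}>7\sqrt{k+1}$ is what licenses Theorem~\ref{thm-disk-model} on the doubled cylinder (and, strictly speaking, the $m=1$ application needs the stronger condition on $\cosh(\pi\eta/2)$, a point your proposal inherits from the paper).
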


We divide the proof into two steps.

\subsubsection*{Step 1: Exponential Decay of $F(\xi)$}

\begin{proposition}\label{prop-exp-decay}
	For any $\delta \in (0,1)$, the function $e^{\delta|\xi|}F(\xi)$ is bounded for $\xi \in \mathbb{R}$.
\end{proposition}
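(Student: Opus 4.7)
The plan is to show that $1/J_a$ extends to a holomorphic function on a horizontal strip around the real axis, and then to apply a Paley--Wiener contour-shift argument to convert that analyticity into exponential decay of $F(\xi)$.

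First I would observe that $J_a=\int_{-\pi/2}^{\pi/2}e^{2at}\cos^{2k}t\,dt$ is entire in $a$, since the integrand is entire and the integration interval is compact. Writing $a=x+iy$ with $x,y\in\R$, the elementary inequality $|e^{2iyt}-1|=2|\sin(yt)|\leq 2|yt|$ gives
\[
|J_a-J_x|\leq \int_{-\pi/2}^{\pi/2}|e^{2iyt}-1|\,e^{2xt}\cos^{2k}t\,dt\leq 2|y|\int_{-\pi/2}^{\pi/2}|t|\,e^{2xt}\cos^{2k}t\,dt\leq \pi|y|\,J_x.
\]
Since $J_x>0$ for all real $x$, this implies $|J_a|\geq(1-\pi|y|)J_x>0$ whenever $|y|<1/\pi$. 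Hence $J_a$ has no zeros in the strip $\{|\Im a|<1/\pi\}$, and $1/J_a$ is holomorphic there.

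Second, a Laplace-type asymptotic near the endpoints gives $J_x\sim \Gamma(2k+1)\,e^{\pi|x|}/(2|x|)^{2k+1}$ as $|x|\to\infty$, so $1/J_x\in L^1(\R)$ and decays exponentially. By the bound above, the same remains true of $1/J_{x-i\epsilon}$ for any fixed $|\epsilon|<1/\pi$. For $\xi>0$ I would shift the contour in
\[
F(\xi)=\int_{\R}e^{-2\pi i\xi a}\,\frac{da}{J_a}
\]
downward to $\{\Im a=-\epsilon\}$. Since $1/J_{R+iy}\to 0$ as $R\to\pm\infty$ uniformly in $|y|\leq \epsilon$, the vertical sides of the rectangle contribute nothing, and Cauchy's theorem gives
\[
F(\xi)=e^{-2\pi\epsilon\xi}\int_{\R}e^{-2\pi i\xi x}\,\frac{dx}{J_{x-i\epsilon}},\qquad |F(\xi)|\leq C(\epsilon)\,e^{-2\pi\epsilon\xi},
\]
with $C(\epsilon)=\|1/J_{\cdot-i\epsilon}\|_{L^1}<\infty$. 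For $\xi<0$ one shifts upward to obtain the same bound with $|\xi|$ on the right. Given $\delta\in(0,1)$, choose any $\epsilon\in(\delta/(2\pi),1/\pi)$, which is nonempty because $\delta/(2\pi)<1/(2\pi)<1/\pi$; this gives $|F(\xi)|\leq C(\epsilon)\,e^{-\delta|\xi|}$ and hence the proposition.

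The only genuinely substantive step is establishing the zero-free strip for $J_a$. The crude estimate $|J_a-J_x|\leq \pi|y|\,J_x$ already yields a strip of width $1/\pi$, comfortably wider than the $1/(2\pi)$ needed for $\delta\in(0,1)$; I do not anticipate trouble there, nor in the routine Laplace asymptotic that ensures $L^1$-integrability on each shifted line. One must of course be careful not to invoke Theorem \ref{thm-f-b} or Theorem \ref{thm-jak}, since both ultimately depend on this proposition.
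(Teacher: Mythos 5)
Your proof is correct, and it takes a genuinely different route from the paper. You obtain a zero-free strip for $J_a$ of half-width $1/\pi$ from the crude bound $|e^{2iyt}-1|\le 2|yt|$, and then convert analyticity of $1/J_a$ directly into pointwise exponential decay of $F$ by shifting the contour to $\Im a=\mp\epsilon$ and using an $L^1$ bound on the shifted line; since the Fourier convention carries the factor $2\pi$, half-width $>\tfrac{1}{2\pi}$ already suffices for every $\delta<1$, so your narrower strip is enough, and your endpoint Laplace asymptotic (or, alternatively, the paper's earlier lower bound $J_a>\tfrac{1}{4a}(e^{a/\sqrt{k}}-1)$) guarantees the needed integrability and the vanishing of the vertical sides. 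The paper instead uses the sharper observation $\Re J_{a+ib}=\int\cos(2bt)e^{2at}\cos^{2k}t\,dt\ge \cos(\delta\pi)J_a$ type bound (in its normalization, $\cos(bt)\ge\cos(\tfrac{\delta\pi}{2})$ for $|b|\le\delta<1$), which yields a strip of half-width approaching $1$, and then applies the $L^2$ Paley--Wiener theorem to conclude $e^{\delta|\xi|}F\in L^2$; because that only gives an $L^2$ statement, the paper must also control $F'$ the same way and run an ODE/Cauchy--Schwarz argument ($H'-\delta H=K$) to upgrade to the pointwise boundedness asserted in the proposition. Your contour-shift argument delivers the pointwise bound in one step and avoids both the Paley--Wiener theorem and that upgrade; what the paper's estimate buys is a wider strip, which is what it later exploits (with the exact formula) near the end of Section 4, but it is not needed for this proposition.
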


To prove this, we recall the Paley-Wiener theorem (see, e.g., \cite{krantz2002primer}):

\begin{theorem}[Paley-Wiener]\label{thm-p-w}
	Let $f \in L^2(\mathbb{R})$. The following are equivalent:
	\begin{itemize}
		\item[(1)] There exists a function $F$ and constants $a, C > 0$ such that $F$ is holomorphic in the strip $\{z \in \mathbb{C} : |\Im z| < a\}$, $F(x+i0) = f(x)$ for all real $x$, and
		\[
		\int_{\mathbb{R}} |F(x+iy)|^2 dx \leq C, \quad \forall |y| < a.
		\]
		\item[(2)] The function $e^{a|\xi|}\hat{f}(\xi)$ lies in $L^2(\mathbb{R})$.
	\end{itemize}
\end{theorem}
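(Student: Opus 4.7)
The plan is to prove each direction by the standard Fourier--Plancherel route: the bridge is the identity that the Fourier transform of the horizontal slice $F(\,\cdot\,+iy)$ equals $e^{-2\pi\xi y}\hat{f}(\xi)$, which turns the horizontal-line $L^2$ bound on $F$ into a weighted $L^2$ bound on $\hat{f}$, and vice versa. I use the paper's Fourier convention $\hat g(\xi)=\int g(x)e^{-2\pi i x\xi}\,dx$; as a result, the natural matching between strip width and exponential weight carries a factor of $2\pi$ that I absorb into the constant.

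For $(2)\Rightarrow(1)$, assume $e^{a|\xi|}\hat f(\xi)\in L^2(\mathbb{R})$ and define
\[
F(z)\eqd \int_{\mathbb{R}}\hat f(\xi)\,e^{2\pi i\xi z}\,d\xi, \qquad z=x+iy,\ 2\pi|y|<a.
\]
The integrand is bounded by $|\hat f(\xi)|\,e^{2\pi|y||\xi|}$, which lies in $L^1$ by Cauchy--Schwarz against the $L^2$ function $e^{a|\xi|}\hat f$, so the integral converges absolutely, uniformly on compact subsets of the strip. Holomorphicity of $F$ then follows from Morera's theorem after Fubini, and $F(x+i0)=f(x)$ is Fourier inversion. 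By Plancherel on each horizontal line $\Im z=y$,
\[
\int_{\mathbb{R}}|F(x+iy)|^2\,dx=\int_{\mathbb{R}}|\hat f(\xi)|^2 e^{-4\pi\xi y}\,d\xi,
\]
and since $e^{-4\pi\xi y}\le e^{2a|\xi|}$ for $2\pi|y|\le a$, the right side is uniformly bounded, giving the required $L^2$ control.

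For $(1)\Rightarrow(2)$, the bridge identity $(F(\,\cdot\,+iy_0))^{\wedge}(\xi)=e^{-2\pi\xi y_0}\hat f(\xi)$ a.e.\ is proved by testing against a Schwartz function: given $\varphi\in\mathcal{S}$, Parseval reduces the identity to the statement that the integral of $F(z)g(z)$ along the horizontal line $\Im z=y_0$ equals the integral along the real line, where $g$ is the inverse Fourier transform of an appropriate twist of $\varphi$. This is a Cauchy-theorem argument on a tall thin rectangle with horizontal sides on $\Im z=0$ and $\Im z=y_0$ and vertical sides at $\Re z=\pm R$; one shows that the vertical contributions vanish as $R\to\infty$ along a suitable subsequence. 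Granted the identity, Plancherel gives
\[
\int_{\mathbb{R}}|\hat f(\xi)|^2 e^{-4\pi\xi y_0}\,d\xi\le C
\]
uniformly in $y_0$. Splitting at $\xi=0$ and sending $y_0\uparrow a/(2\pi)$ (resp.\ $y_0\downarrow -a/(2\pi)$) with monotone convergence controls the $\xi<0$ (resp.\ $\xi>0$) piece, yielding $e^{a|\xi|}\hat f(\xi)\in L^2$.

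The main obstacle is the vanishing of the vertical sides of the Cauchy contour in the $(1)\Rightarrow(2)$ direction, because the hypothesis provides only $L^2$ control on horizontal slices, not pointwise decay in $\Re z$. The workaround is to interchange the contour integration with the $\varphi$-pairing by Fubini: the contribution from $\Re z=R$ becomes the integral of $F(R+iy)$ against a Schwartz tail of $g(R+iy)$, which by Cauchy--Schwarz is bounded by $\|F(\,\cdot\,+iy)\|_{L^2}$ times the $L^2$-tail of the Schwartz function on a short interval near $R$. Chebyshev's inequality applied to the uniformly bounded quantity $\int_0^{y_0}\|F(\,\cdot\,+iy)\|_{L^2}^2\,dy$ then produces a sequence $R_n\to\infty$ along which the vertical contributions vanish, as needed. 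An alternative is to regularize by the entire multiplier $e^{-\varepsilon z^2}$, which preserves holomorphicity and yields rapid decay on vertical lines; apply Cauchy unconditionally to the regularization, then pass $\varepsilon\to 0$ using the uniform horizontal $L^2$ bounds on $F$ and dominated convergence.
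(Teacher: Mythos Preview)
The paper does not prove this theorem; it is quoted as a classical result with a reference to \cite{krantz2002primer}, so there is no proof in the paper to compare against.

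Your outline is the standard textbook argument and is correct in substance. Two small remarks. First, with the paper's Fourier convention $\hat g(\xi)=\int g(x)e^{-2\pi i x\xi}\,dx$ the exact pairing between strip width and exponential weight carries a factor of $2\pi$; the theorem as stated in the paper is slightly off by this factor, and you handled it by shrinking the strip rather than ``absorbing into a constant.'' This is harmless for the paper's application, which only uses the qualitative implication. Second, in $(1)\Rightarrow(2)$ your vanishing argument for the vertical sides is fine once stated precisely: from $\int_0^{y_0}\int_{\mathbb{R}}|F(x+iy)|^2\,dx\,dy\le C y_0<\infty$ and Fubini one gets that $x\mapsto\int_0^{y_0}|F(x+iy)|^2\,dy$ lies in $L^1(\mathbb{R})$, hence has $\liminf$ equal to $0$ along sequences $R_n\to\pm\infty$; combined with the Schwartz decay of $g$ on the strip, Cauchy--Schwarz kills the vertical contributions. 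The $e^{-\varepsilon z^2}$ regularization you mention is an equally valid alternative.
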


For any $\delta \in (0,1)$, if $|b| \leq \delta$, then $\cos(bt) \geq \cos(\frac{\delta\pi}{2}) =: C_\delta > 0$. Thus,
\[
\Re J_{a+bi} = \int_{-\pi/2}^{\pi/2} \cos(bt) e^{a t} \cos^{2k} t\, dt \geq C_\delta J_a,
\]
so $|J_{a+bi}| \geq C_\delta J_a$. Therefore,
\[
\int_{\mathbb{R}} \left| \frac{1}{J_{a+ib}} \right|^2 da \leq \frac{1}{C_\delta^2} \int_{\mathbb{R}} \frac{1}{J_a^2} da, \quad \forall |b| \leq \delta.
\]
By the Paley-Wiener theorem, $e^{\delta|\xi|}F(\xi) \in L^2(\mathbb{R})$.

Similarly, since
\[
F'(\xi) = -2\pi i \int_{-\infty}^{\infty} e^{-2\pi i a \xi} \frac{a\, da}{J(a)} = -2\pi i\, \widehat{\left(\frac{a}{J(a)}\right)}(\xi),
\]
and
\[
\int_{\mathbb{R}} \left| \frac{a+bi}{J_{a+ib}} \right|^2 da \leq \frac{1}{C_\delta^2} \int_{\mathbb{R}} \frac{a^2 + \delta^2}{J_a^2} da, \quad \forall |b| \leq 1-\delta,
\]
we also have $e^{\delta|\xi|}F'(\xi) \in L^2(\mathbb{R})$.

It remains to show that $e^{\delta \xi} F(\xi)$ is bounded on $[0, +\infty)$. Let $H(\xi) = e^{\delta \xi} F(\xi)$ and $K(\xi) = e^{\delta \xi} F'(\xi)$. Then both $H(\xi)$ and $K(\xi)$ are in $L^2([0, +\infty))$, and
\[
H' - \delta H = K.
\]
Solving this ODE gives
\[
H(\xi) = e^{\delta \xi} \left( H(0) + \int_0^\xi e^{-\delta x} K(x) dx \right).
\]
Since $K(\xi) \in L^2([0, +\infty))$, the integral $\int_0^{+\infty} e^{-\delta x} |K(x)| dx$ converges. Let $A = \lim_{\xi \to \infty} \int_0^\xi e^{-\delta x} K(x) dx$. Since $H(\xi) \in L^2([0, +\infty))$, we must have $H(0) = -A$. Thus,
\[
H(\xi) = -e^{\delta \xi} \int_\xi^{\infty} e^{-\delta x} K(x) dx = -\int_0^{\infty} e^{-\delta x} K(x+\xi) dx.
\]
By Cauchy-Schwarz inequality,
\[
|H(\xi)| \leq \|K\|_{L^2([0, +\infty))} \left( \int_0^{\infty} e^{-2\delta x} dx \right)^{1/2}.
\]
Therefore, $e^{\delta|\xi|} F(\xi)$ is bounded, as claimed.

\
\subsubsection*{Step 2: Estimating the Remainder}

Let 
\[
S = \left\{ \lambda \in \mathbb{Z} \;\middle|\; \lambda > 1,\; p^2 \nmid \lambda\ \forall\ p > 1 \right\}
\]
be the set of square-free integers greater than $1$. For each positive $\lambda \in \mathbb{Z}^*$, let $\phi(\lambda)$ denote the number of distinct prime factors of $\lambda$. Clearly,
\begin{equation}\label{e-phi}
	\phi(\lambda) \leq \log_2 \lambda.
\end{equation}

Consider the alternating series
\begin{equation}\label{e-alt}
	\sum_{\lambda \in S} (-1)^{\phi(\lambda)-1} \sum_{n \in \mathbb{Z}^*} F(n\lambda\eta).
\end{equation}
By \eqref{e-phi}, each $F(m\eta)$ with integer $|m| > 1$ appears in \eqref{e-alt} at most $2^{\phi(m)} \leq m$ times. By Proposition~\ref{prop-exp-decay}, the series \eqref{e-alt} converges absolutely.

Recall that $\sum_{l=0}^n \binom{n}{l} (-1)^l = 0$ for $n \geq 1$, so $\sum_{l=1}^n \binom{n}{l} (-1)^{l-1} = 1$. Thus,
\[
\sum_{|\xi| \geq 2} F(\xi\eta) = \sum_{\lambda \in S} (-1)^{\phi(\lambda)-1} \sum_{n \in \mathbb{Z}^*} F(n\lambda\eta).
\]
By Theorems~\ref{thm-disk-model} and~\ref{thm-F0}, we have
\[
\left| \sum_{n \in \mathbb{Z}^*} F(n\lambda\eta) \right| \leq 14 (k+1)^2  \, e^{-d(\lambda\eta)k}.
\]
Therefore,
\begin{align*}
	|2F(\eta)| &\leq 14(k+1)^2  \left( e^{-d(\eta)k} + \sum_{\lambda \in S} e^{-d(\lambda\eta)k} \right) \\
	&\leq 14(k+1)^2  \left( e^{-d(\eta)k} + \sum_{\lambda \geq 2} e^{-d(\lambda\eta)k} \right).
\end{align*}

To estimate the sum, we use the following technical lemma (see \cite{sun2024apde}):

\begin{lemma}\label{lem-concave}
	Let $f(x)$ be a concave function. Suppose $f'(x_0) < 0$, then
	\[
	\int_{x_0}^\infty e^{f(x)} dx \leq \frac{e^{f(x_0)}}{-f'(x_0)}.
	\]
\end{lemma}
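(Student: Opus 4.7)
The plan is to dominate the integrand by a pure exponential using the tangent-line characterization of concavity. Concretely, since $f$ is concave, at any point $x_0$ in its domain the tangent line lies above the graph, i.e.
\[
f(x) \leq f(x_0) + f'(x_0)(x - x_0) \qquad \text{for all } x \geq x_0.
\]
(If $f$ is only assumed concave without differentiability, one should first argue that the derivative at $x_0$ exists in the almost-everywhere sense and that any supporting line at $x_0$ dominates $f$; here the hypothesis already names $f'(x_0)$, so this point is harmless.)

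Exponentiating and integrating from $x_0$ to $\infty$ gives
\[
\int_{x_0}^\infty e^{f(x)}\, dx \;\leq\; e^{f(x_0)} \int_{x_0}^\infty e^{f'(x_0)(x - x_0)}\, dx.
\]
Because $f'(x_0) < 0$, the right-hand integral converges and equals $1/(-f'(x_0))$, yielding exactly the claimed bound $e^{f(x_0)}/(-f'(x_0))$.

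There is no genuine obstacle here: the statement is a one-line consequence of the supporting-line inequality for concave functions together with the elementary fact $\int_0^\infty e^{-\alpha t}\,dt = 1/\alpha$ for $\alpha > 0$. The only mild subtlety is making sure the concavity hypothesis is used in a form that allows pointwise comparison with a linear upper bound on $[x_0,\infty)$, but this is standard convex-analysis and does not require any of the surrounding Bergman-kernel machinery.
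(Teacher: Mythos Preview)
Your argument is correct and is exactly the standard proof of this elementary estimate. The paper itself does not prove the lemma but merely cites it from \cite{sun2024apde}, so there is no ``paper's own proof'' to compare with; your tangent-line bound followed by integrating the dominating exponential is the expected one-line argument.
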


Since $d(x)$ is convex and $d'(x) = \pi \tanh \frac{x\pi}{2}$, we have
\begin{align*}
	\sum_{\lambda \geq 2} e^{-d(\lambda\eta)k} 
	&< e^{-d(2\eta)k} + \int_{2}^{\infty} e^{-d(\lambda\eta)k} d\lambda \\
	&< \left( 1 + \frac{1}{k\tanh(\pi\eta)} \right) e^{-d(2\eta)k} \\
	&\leq 3 e^{-d(2\eta)k}
\end{align*}
for $k\tanh(\pi\eta) \geq 1/2$.

Therefore, we have proved Theorem~\ref{thm-exp-decay}.

\
\subsection{Exact formula}

We now prove Theorem \ref{thm-f-b}.

Let
\[
s_1(z) = \frac{1}{2\pi\sqrt{\rho_{k+1}(1)}} \sum_{a=-\infty}^{\infty} \frac{z^a}{J(a,k+1,\eta)}.
\]
Then $s_1$ is a peak section at $z_0=1$, since $\|s_1\|_{h_\eta}=1$ and $|s_1(1)|_{h_\eta} = \sqrt{\rho_{k+1}(1)}$.

For $z = e^{-i\pi\theta}$,
\[
s_1(e^{-i\pi\theta}) = \frac{1}{2\pi\sqrt{\rho_{k+1}(1)}} \sum_{a=-\infty}^{\infty} \frac{e^{-i\pi \theta a}}{J(a,k+1,\eta)}.
\]
The Fourier transform of $\frac{e^{-i\pi \theta a}}{J(a,k+1,\eta)}$ is
\[
\int_{-\infty}^{\infty} \frac{e^{-i\pi \theta a - 2\pi \xi a}}{J(a,k+1,\eta)}\, da = F(\eta(\xi + \tfrac{\theta}{2})).
\]
By the Poisson summation formula,
\[
2\pi\sqrt{\rho_{k+1}(1)}\, s_1(e^{-i\pi\theta}) = \sum_{\xi=-\infty}^{\infty} F(\eta(\xi + \tfrac{\theta}{2})).
\]
For a fixed $b>0$, let $\theta = \frac{2b}{\eta}$. Then
\[
2\pi\sqrt{\rho_{k+1}(1)}\, s_1(e^{-i\pi\theta}) = F(b) + \sum_{|\xi|\geq 1} F(\eta\xi + b).
\]
By Theorem \ref{thm-exp-decay}, when $k\geq 1$, $\lim_{\eta\to+\infty} \sum_{|\xi|\geq 1} F(\eta\xi + b) = 0$, so
\[
\lim_{\eta\to+\infty} s_1(e^{-i\pi\theta}) = \frac{1}{\sqrt{(k+1/2)2\pi}} F(b).
\]

On the other hand, since $\cos t = 1$ at $z = e^{-i\pi\theta}$, the norm $|s_1(e^{-i\pi\theta})|_{h_\eta}$ equals the absolute value of $s_1(e^{-i\pi\theta})$. Since $d(1, e^{-i\pi\theta}) = \eta\pi\theta = 2b\pi$, by Theorem \ref{thm-disk-kk},
\[
\lim_{\eta\to+\infty} |s_1(e^{-i\pi\theta})| = \sqrt{\frac{k+1/2}{2\pi}}\,  \cosh^{-2(k+1)}(\pi b) .
\]
Therefore,
\[
|F(b)| = (k+1/2)\,  \cosh^{-2(k+1)}(\pi b) .
\]
So $F(\xi)$ does not vanish.
Since $F(0)>0$, we have
\[
F(b) = (k+1/2)\,  \cosh^{-2(k+1)}(\pi b) .
\]
This proves Theorem \ref{thm-f-b} for $b>0$.

The function $\log J_a$ is convex in $a$. To see this, let $d\mu = \cos^{2k} t\, dt$; then 
\[
\frac{\partial \log J_a}{\partial a} = \frac{\int_{-\pi/2}^{\pi/2} 2t\, e^{2a t} d\mu}{\int_{-\pi/2}^{\pi/2} e^{2a t} d\mu} = \frac{\int_{-\pi/2}^{\pi/2} 2t\, e^{2a t} d\mu}{J_a},
\]
and by the Cauchy-Schwarz inequality,
\begin{align*}
\frac{\partial^2 \log J_a}{\partial a^2}
= \frac{
    \int_{-\pi/2}^{\pi/2} e^{2a t} d\mu \int_{-\pi/2}^{\pi/2} (2t)^2 e^{2a t} d\mu
    - \left( \int_{-\pi/2}^{\pi/2} 2t\, e^{2a t} d\mu \right)^2
}{
    \left( \int_{-\pi/2}^{\pi/2} e^{2a t} d\mu \right)^2
} > 0
\end{align*}
Since $\lim_{a\to\infty}J_a=+\infty$, it is not hard to see that \[ \lim_{a\to+\infty} \frac{\int_{-\pi/2}^{\pi/2} 2t\, e^{2a t} d\mu}{\int_{-\pi/2}^{\pi/2} e^{2a t} d\mu} = \pi. \]
So \[ \lim_{a\to+\infty}\frac{\partial \log J_a}{\partial a} = \pi, \lim_{a\to-\infty}\frac{\partial \log J_a}{\partial a} =- \pi \]
Therefore, $F(b)=\int_{-\infty}^{\infty}e^{-2\pi b\sqrt{-1}}\frac{da}{J_a}$ is holomorphic for $|\Im b|<\frac12$, since $\dbar F(b) = 0$.
Theorem \ref{thm-f-b} follows from this.

The following proposition is a preparation for the proof of the main theorem in the case of complex hyperbolic cylinder.
\begin{proposition}\label{prop-f-twist} 
	Let \(t_0=\log|z_0|\) and \(u=\sinh^{-1}(\tan(\eta t_0))\). For \(k\geq 1\) one has
	\[
	\cos^{2k+2}(\eta t_0)\,
	F\!\bigl(\eta\xi + i\eta\tfrac{t_0}{\pi}\bigr)
	= \frac{\bigl(k+\tfrac12\bigr)\,
	e^{-i(k+1)\vartheta_\xi}}
	{\bigl(\cosh^2(\eta\pi\xi)\cosh^2 u-\sinh^2 u\bigr)^{\,k+1}}.
	\]
	Here the phase \(\vartheta_\xi\) is determined (up to \(2\pi\)) by
	\[
	\cos\vartheta_\xi
	=1-\frac{2\sinh^2(\eta\pi\xi)\sinh^2 u}
	{\cosh^2(\eta\pi\xi)+\sinh^2(\eta\pi\xi)\sinh^2 u},
	\qquad
	\operatorname{sign}\!\bigl(\sin\vartheta_\xi\bigr)=\operatorname{sign}(\xi u).
	\]

\end{proposition}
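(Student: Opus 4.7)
The plan is to apply Theorem~\ref{thm-f-b} directly to the complex argument $b=\eta\xi+i\eta t_0/\pi$, so that $\pi b=\eta\pi\xi+i\eta t_0$, and then polar-decompose the resulting $\cosh$. Using $\cosh(x+iy)=\cos y\cosh x+i\sin y\sinh x$, I would write
\[
\cosh(\eta\pi\xi+i\eta t_0)=\cosh(\eta\pi\xi)\cos(\eta t_0)+i\sinh(\eta\pi\xi)\sin(\eta t_0)=\rho_\xi e^{i\psi_\xi},
\]
with $\rho_\xi^2=\cosh^2(\eta\pi\xi)\cos^2(\eta t_0)+\sinh^2(\eta\pi\xi)\sin^2(\eta t_0)=\cosh^2(\eta\pi\xi)-\sin^2(\eta t_0)$ and $\tan\psi_\xi=\tanh(\eta\pi\xi)\tan(\eta t_0)$. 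Since $k+1$ is an integer, $\cosh^{2(k+1)}(\pi b)=\rho_\xi^{2(k+1)}e^{i2(k+1)\psi_\xi}$, so Theorem~\ref{thm-f-b} yields $F(\eta\xi+i\eta t_0/\pi)=(k+\tfrac12)\rho_\xi^{-2(k+1)}e^{-i(k+1)\vartheta_\xi}$ after setting $\vartheta_\xi:=2\psi_\xi$.

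The next step is to convert everything to the geodesic-normal coordinate $u$. From $u=\sinh^{-1}(\tan(\eta t_0))$ one gets $\sinh u=\tan(\eta t_0)$ and $\cosh u=\sec(\eta t_0)$, hence $\sin^2(\eta t_0)=\cos^2(\eta t_0)\sinh^2 u$. Substituting,
\[
\rho_\xi^2=\cos^2(\eta t_0)\bigl(\cosh^2(\eta\pi\xi)\cosh^2 u-\sinh^2 u\bigr).
\]
Raising to the $(k+1)$-st power and multiplying the identity for $F$ by the prefactor $\cos^{2k+2}(\eta t_0)\,e^{2\pi i\mathfrak{m}_{k+1}\xi}$ exactly cancels the cosine factor from $\rho_\xi^{2(k+1)}$ in the denominator and tacks the phase $2\pi\mathfrak{m}_{k+1}\xi$ onto the exponential, giving the claimed right-hand side.

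For the phase, I would verify $\cos\vartheta_\xi=1-2\sin^2\psi_\xi$ directly, then compute
\[
\sin^2\psi_\xi=\frac{\sinh^2(\eta\pi\xi)\sin^2(\eta t_0)}{\rho_\xi^2}=\frac{\sinh^2(\eta\pi\xi)\sinh^2 u}{\cosh^2(\eta\pi\xi)+\sinh^2(\eta\pi\xi)\sinh^2 u},
\]
using the identity $\cosh^2(\eta\pi\xi)\cosh^2 u-\sinh^2 u=\cosh^2(\eta\pi\xi)+\sinh^2(\eta\pi\xi)\sinh^2 u$, which follows from $\cosh^2 u-\sinh^2 u=1$ and $\cosh^2(\eta\pi\xi)-\sinh^2(\eta\pi\xi)=1$. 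This matches the displayed formula for $\cos\vartheta_\xi$. For the sign prescription, $\sin\vartheta_\xi=2\sin\psi_\xi\cos\psi_\xi$ has the sign of $\sinh(\eta\pi\xi)\sin(\eta t_0)\cosh(\eta\pi\xi)\cos(\eta t_0)$; since $|\eta t_0|<\pi/2$ forces $\cos(\eta t_0)>0$ and $\cosh(\eta\pi\xi)>0$ always, this reduces to the sign of $\xi\sin(\eta t_0)$, which agrees with the sign of $\xi u$ because $u$ and $\tan(\eta t_0)=\sinh u$ share a sign.

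I do not anticipate any real obstacle here: the proposition is essentially the polar decomposition of $\cosh^{-2(k+1)}$ combined with the change of variables $t_0\mapsto u$. The only point requiring mild care is the choice of branch for $\vartheta_\xi$, but since $k+1$ is an integer only the residue $\pmod{2\pi}$ matters, and the sign-of-sine condition makes $\vartheta_\xi$ unambiguous modulo $2\pi$.
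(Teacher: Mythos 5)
Your proposal is correct and follows essentially the same route as the paper: apply Theorem~\ref{thm-f-b} at $b=\eta\xi+i\eta t_0/\pi$ (valid since $|\eta t_0|<\pi/2$ gives $|\Im b|<\tfrac12$), polar-decompose $\cosh(\eta\pi\xi+i\eta t_0)$, and convert via $\cosh u=1/\cos(\eta t_0)$, $\sinh u=\tan(\eta t_0)$. The only cosmetic difference is that you factor $\cosh$ and set $\vartheta_\xi=2\psi_\xi$, while the paper works directly with the real part and modulus of $\cosh^2$; the computations and sign analysis agree.
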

\begin{proof}
Since
\begin{align*}
\cosh^2(x+iy)
&= \cos^2 y\,\cosh^2 x - \sin^2 y\,\sinh^2 x + i\sin(2y)\sinh x\cosh x,
\end{align*}
Take \(x=\eta\pi\xi\) and \(y=\eta t_0\), using
\(\cos(\eta t_0)=\frac{1}{\cosh u}\) and \(\sin(\eta t_0)=\tanh u\), we obtain
\begin{align*}
\Re\!\bigl(\cosh^2(\eta\pi\xi + i\eta t_0)\bigr)
&= \frac{\cosh^2(\eta\pi\xi)-\sinh^2(\eta\pi\xi)\sinh^2 u}{\cosh^2 u},\\[4pt]
\bigl|\cosh^2(\eta\pi\xi + i\eta t_0)\bigr|
&= \frac{\cosh^2(\eta\pi\xi)+\sinh^2(\eta\pi\xi)\sinh^2 u}{\cosh^2 u}.
\end{align*}
Hence
\begin{align*}
\frac{\Re\cosh^2(\eta\pi\xi + i\eta t_0)}
{\bigl|\cosh^2(\eta\pi\xi + i\eta t_0)\bigr|}
&= \frac{\cosh^2(\eta\pi\xi)-\sinh^2(\eta\pi\xi)\sinh^2 u}
{\cosh^2(\eta\pi\xi)+\sinh^2(\eta\pi\xi)\sinh^2 u}\\
&= 1-\frac{2\sinh^2(\eta\pi\xi)\sinh^2 u}
{\cosh^2(\eta\pi\xi)+\sinh^2(\eta\pi\xi)\sinh^2 u}.
\end{align*}
Therefore there exists an angle \(\vartheta_\xi\) with
\[
\cos\vartheta_\xi
=1-\frac{2\sinh^2(\eta\pi\xi)\sinh^2 u}
{\cosh^2(\eta\pi\xi)+\sinh^2(\eta\pi\xi)\sinh^2 u},
\]
and we may write the polar form
\[
\cosh^2(\eta\pi\xi + i\eta t_0)
= \bigl(\cosh^2(\eta\pi\xi)-\tanh^2 u\bigr)\,e^{i\vartheta_\xi}.
\] The sign of $\sin \vartheta_\xi$ is the same as the sign of $\sinh(\eta\pi\xi)\sinh u$.
Consequently
\[
\frac{\cos^{2}(\eta t_0)}{\cosh^2(\eta\pi\xi + i\eta t_0)}
= \frac{e^{-i\vartheta_\xi}}
{\cosh^2(\eta\pi\xi)\cosh^2 u-\sinh^2 u}\,.
\]
Since \[
F(\eta\pi\xi+i\eta \tfrac{t_0}{\pi}) = \left(k + \tfrac{1}{2}\right) \left(\cosh(\eta\pi\xi+i\eta t_0)\right)^{-2(k+1)} ,
\] we get the result.
\end{proof}

\subsection{Holonomy along a geodesic loop}
Let $h$ be a Hermitian metric on the trivial bundle on $\C_\eta^*$ whose curvature is $-i\omega_\eta$. Let $\nabla$ be the Chern connection of $h$.
\begin{theorem}\label{thm-holonomy-cylinder}
	Let \(x\in\C_\eta^*\) and set \(t=\log|x|\), \(u=\operatorname{arcsinh}(\tan(\eta t))\).
	Give the central circle \(\gamma_0=\{|z|=1\}\) the counterclockwise orientation.
	Let \(\gamma:[0,\ell]\to\C_\eta^*\) be a geodesic loop based at \(x\) and denote its winding number by
	\[
	m=\frac{1}{2\pi i}\int_\gamma\frac{dz}{z}\in\mathbb{Z}.
	\]
	If the holonomy of the Chern connection \(\nabla\) along \(\gamma_0\) equals \(e^{2\pi i\alpha_0}\),
	then the holonomy along \(\gamma\) is
	\[
	\operatorname{Hol}_\gamma
	= e^{2\pi i\alpha_\gamma}
	= e^{ i\bigl(2\pi m\alpha_0+\vartheta_\gamma\bigr)},
	\]
	where the phase \(\vartheta_\gamma\in\mathbb{R}\) (well-defined modulo \(\mathbb{Z}\)) satisfies
	\[
	\cos\vartheta_\gamma
	=1-\frac{2\sinh^2(m\pi\eta)\sinh^2 u}
	{\cosh^2(m\pi\eta)+\sinh^2(m\pi\eta)\sinh^2 u},
	\qquad
	\operatorname{sign}\bigl(\sin\vartheta_\gamma\bigr)=\operatorname{sign}(m u).
	\]
	Moreover, the length \(\ell\) of \(\gamma\) is given by
	\begin{equation}\label{e-length-gamma}
		\cosh^2\frac{\ell}{2}
	=\cosh^2(m\pi\eta)\cosh^2 u-\sinh^2 u.
	\end{equation}
\end{theorem}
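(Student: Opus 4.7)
The plan is to lift to the universal cover $\D$, writing $\C_\eta^* = \D/\langle g\rangle$ where $g$ is the hyperbolic isometry whose axis projects to the central circle $\gamma_0$ and whose translation length is $2\pi\eta$. Fixing a lift $\tilde x$ of $x$, Proposition~\ref{prop-correspondece} gives that $\gamma$ lifts to the unique geodesic $\tilde\gamma$ from $\tilde x$ to $g^m\tilde x$. Both $\tilde x$ and $g^m\tilde x$ lie at perpendicular distance $|u|$ from the axis, with the feet of the two perpendiculars separated by $2\pi m\eta$ along it. Formula~\eqref{e-length-gamma} then follows from the classical hyperbolic distance formula $\cosh\ell = \cosh^2 u\,\cosh(2\pi m\eta) - \sinh^2 u$ between two points equidistant from a geodesic, combined with $\cosh\ell = 2\cosh^2(\ell/2) - 1$.

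For the holonomy, I would represent the Chern connection upstairs as $d - i\alpha$ with $\alpha$ a real $1$-form satisfying $d\alpha = \omega_0$, and choose the automorphy factor of $g$ so that the induced downstairs holonomy along $\gamma_0$ equals $e^{2\pi i\alpha_0}$. The holonomy along $\gamma$ then factors as $e^{2\pi i m\alpha_0}\cdot e^{i\vartheta_\gamma}$, where the geometric phase $\vartheta_\gamma$ (mod $2\pi$) is computed via Stokes from the region $\Sigma\subset\D$ bounded by $\tilde\gamma$, the segment of the axis joining the two perpendicular feet, and the two perpendicular geodesic segments of length $|u|$. The integrals of $\alpha$ over the two perpendicular segments cancel by the symmetry across the perpendicular bisector of the base, so $\vartheta_\gamma\equiv\pm\,\mathrm{Area}(\Sigma)\pmod{2\pi}$, with the sign dictated by the induced orientation of $\partial\Sigma$.

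The region $\Sigma$ is a Saccheri quadrilateral with base of length $2\pi m\eta$, two right base angles, two equal perpendicular sides of length $|u|$, and summit $\tilde\gamma$ of length $\ell$; denote each summit angle by $\alpha\in(0,\pi/2)$. Gauss--Bonnet in constant curvature $-1$ gives $\mathrm{Area}(\Sigma) = 2\pi - (\pi + 2\alpha) = \pi - 2\alpha$, whence $\cos\vartheta_\gamma = -\cos 2\alpha = 1 - 2\cos^2\alpha$. A direct computation in the upper half-plane model (axis on the imaginary axis, $\tilde x = e^{i\theta_0}$ with $\cos\theta_0 = \tanh|u|$, so that $\tilde\gamma$ is an explicit Euclidean semicircle with center and radius expressible in $L_0 = 2\pi m\eta$ and $\theta_0$) yields $\cos\alpha = \sinh(m\pi\eta)\,|\sinh u|/\cosh(\ell/2)$; substituting $\cosh^2(\ell/2) = \cosh^2(m\pi\eta) + \sinh^2(m\pi\eta)\sinh^2 u$ (equivalent to the length formula) produces the stated expression for $\cos\vartheta_\gamma$. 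For the sign of $\sin\vartheta_\gamma = \sin 2\alpha$, reversing either $m$ or $u$ reflects $\Sigma$ (across the axis, respectively the perpendicular bisector of the base), flipping the boundary orientation and hence the signed area; thus $\sin\vartheta_\gamma$ is odd in each of $m$ and $u$, and positivity in the model case $m,u>0$ yields $\operatorname{sign}(\sin\vartheta_\gamma) = \operatorname{sign}(mu)$. The main delicate point will be the bookkeeping of orientations, in order to confirm that the Stokes step lands on $\vartheta_\gamma = \pi - 2\alpha$ rather than $2\alpha - \pi$ and that the sign of $\sin\vartheta_\gamma$ flips as required under reflections.
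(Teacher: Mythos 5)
Your proposal is correct and follows essentially the same route as the paper: lift to the disk, reduce the holonomy comparison to a Stokes/curvature computation over the Saccheri quadrilateral bounded by $\tilde\gamma$, the axis segment and the two perpendiculars, evaluate its area by Gauss--Bonnet as $\pi-2\alpha$, and finish with hyperbolic trigonometry (your summit-angle identity $\cos\alpha=\sinh(m\pi\eta)\,\sinh|u|/\cosh(\ell/2)$ is equivalent to the paper's Lambert-quadrilateral relations, and your equidistant-points distance formula is equivalent to \eqref{e-length-gamma}). The only point to tighten is the cancellation of the two perpendicular contributions: it should be justified by the fact that these are the two lifts of the same downstairs segment, identified by the deck transformation $g^m$, with the connection form chosen as the pullback of a form on the cylinder (hence $g$-invariant), rather than by a reflection symmetry across the perpendicular bisector, which a general choice of $\alpha$ with $d\alpha=\omega_0$ need not possess.
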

\begin{proof}
	$|u|$ is the distance from $x$ to the circle $\{|z|=1\}$. We can assume that $u\geq 0$.
	We can also assume that $m\geq 1$. 
Let $\pi:\D\to \C_\eta$ be a covering map. We can assume that the real line is mapped to $\gamma_0$. 
Let $\gamma_1$ be the geodesic segment from $x$ to a point in $\gamma_0$ that is orthogonal to $\gamma_0$. Then $\gamma_1$ is lifted to a geodesic segment $\tilde{\gamma}_1$ in $\D$ that is orthogonal to the real line. Let $q$ and $p$ be the two ends of $\tilde{\gamma}_1$. Let $\tilde{\gamma}$ be the lift of $\gamma$, that starts from $q$. Let $q'$ be the other end of $\tilde{\gamma}$. Let $\tilde{\gamma}'_1$ in $\D$ be the lift of $\gamma_1$ that has $q'$ as one endpoint. Let $p'$ be the other endpoint of $\tilde{\gamma}'_1$. Clearly, $p'$ is also in the real line.

By a hyperbolic isometry, we can assume that $0$ is the midpoint of the line segment $\overline{pq}$. Then Figure \ref{fig:quadrilateral} is an illutration of the quadrilateral $Q$ formed by $\tilde{\gamma}_1$, $\tilde{\gamma}$, $\tilde{\gamma}'_1$ and the real line segment $\overline{pp'}$. 

When $m=1$, by Stokes formula, we have \[\int_{\gamma_1}\partial \varphi- \int_{\gamma}\partial \varphi=i\text{area}(Q). \] By the Gauss-Bonnet theorem, the area of the right half of the quadrilateral $Q$ is \[\frac{\pi}{2}-\phi,\] where $\phi$ is the acute angle in the figure. 
So $\text{area}(Q)=\pi-2\phi$, $\cos\vartheta=-\cos(2\phi)$ and $\sin\vartheta=\sin(2\phi)>0$.
	\begin{figure}[ht]
    \centering
    \includegraphics[width=0.65\textwidth]{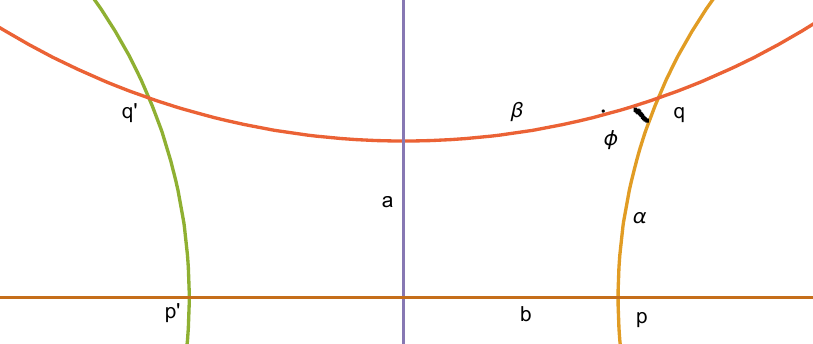}
    \caption{Quadrilateral used in the discussion of the twisted hyperbolic cylinder.}
    \label{fig:quadrilateral}
\end{figure}
\begin{align*}
\cos\phi &= \sinh a\,\sinh b
	\;=\; \tanh\alpha\,\tanh\beta,\\
\cosh a &= \tanh\beta\coth b.
\end{align*}

Hence
\[
\sinh^2 a\,\sinh^2 b = \tanh^2\alpha\,\tanh^2\beta.
\]

It follows that
\begin{align*}
\tanh^2\alpha\,\tanh^2\beta
&= \bigl(\tanh^2\beta\,\coth^2 b - 1\bigr)\sinh^2 b\\
&= \tanh^2\beta\,\cosh^2 b - \sinh^2 b.
\end{align*}

Therefore
\[
\tanh^2\beta=\frac{\sinh^2 b}{\cosh^2 b-\tanh^2\alpha},
\qquad
\cosh^2\beta=\cosh^2 b\,\cosh^2\alpha-\sinh^2\alpha.
\]

Consequently,
\begin{align*}
\cos 2\phi &= 2\cos^2\phi-1 = 2\tanh^2\alpha\,\tanh^2\beta -1\\
&= 2\tanh^2\alpha\frac{\sinh^2 b}{\cosh^2 b-\tanh^2\alpha}-1\\
&= \frac{2\sinh^2\alpha\,\sinh^2 b}{\cosh^2 b\,\cosh^2\alpha-\sinh^2\alpha}-1\\
&= \frac{2\sinh^2\alpha\,\sinh^2 b}{\sinh^2 b\,\sinh^2\alpha+\cosh^2 b}-1.
\end{align*}
Let $\vartheta_\gamma=\text{area}(Q)$, then $\cos(\vartheta_\gamma)=1- \frac{2\sinh^2\alpha\,\sinh^2 b}{\sinh^2 b\,\sinh^2\alpha+\cosh^2 b}$ and $ \sin(\vartheta_\gamma)=\sin(2\phi)>0$. 
Since $\alpha=u$ and $b=\eta\pi$, this proves the case \(m=1\).

When \(m>1\) we pull back \(\partial\varphi\) to the quadrilateral \(Q\). Then the diffence $\int_{\gamma_1}\partial \varphi- m\int_{\gamma}\partial \varphi$ is still equal to $i\text{area}(Q)$.  The same computation with \(\alpha=u\) and \(b=m\eta\pi\) yields the general case.

	\end{proof}

\section{Complex hyperbolic cylinder model}\label{sec-comp-hyper-cyl}
\subsection{Complex hyperbolic cylinder}
Given $\lambda > 1$ and $\vartheta=(\vartheta_1, \ldots, \vartheta_{n-1}) $ with $-\pi\leq \vartheta_j\leq\pi$, we define $A_{\lambda,\vartheta} \in \mathrm{Aut}(\mathfrak{H}^n)$ by
\[
A_{\lambda,\vartheta}(z', z_n) = (\lambda e^{i \vartheta}z', \lambda^2 z_n),
\]
where $e^{i \vartheta} z' = (e^{i \vartheta_1} z_1, \ldots, e^{i \vartheta_{n-1}} z_{n-1})$. 
Let $S_{\lambda,\vartheta} \subset \mathrm{Aut}(\mathfrak{H}^n)$ be the cyclic subgroup generated by $A_{\lambda,\vartheta}$. Consider the quotient space
\[
X_{\lambda,\vartheta} \triangleq \mathfrak{H}^n / S_{\lambda,\vartheta}.
\]
The action of $S_{\lambda,\vartheta}$ is free and properly discontinuous, so $X_{\lambda,\vartheta}$ is a complex manifold, and the quotient map \[Q: \mathfrak{H}^n \to X_{\lambda,\vartheta}\] is a covering map. Since the metric $\omega_H$ is invariant under $S_{\lambda,\vartheta}$, $X_{\lambda,\vartheta}$ inherits a Kähler form $\omega_{\lambda,\vartheta}$ such that $q^* \omega_{\lambda,\vartheta} = \omega_H$.

Following section~\ref{sec-hyper-cyl}, for a positive number $\eta$, let $\C_\eta^* \subset \C^*$ denote the annulus defined by $|\log |w_n|| < \frac{\pi}{2\eta}$. Set \begin{equation}
\eta = \frac{\log \lambda}{\pi}.
\end{equation}

Define a holomorphic map
\[
\Phi_{\lambda,\vartheta}: \mathfrak{H}^n \to \C^{n-1} \times \C_\eta^*
\]
by
\begin{equation}
	(z', z_n) \mapsto \left( z_1z_n^{-1/2-\frac{\vartheta_1 \sqrt{-1}}{2\log \lambda}},\cdots, z_{n-1}z_n^{-1/2-\frac{\vartheta_{n-1} \sqrt{-1}}{2\log \lambda}}, z_n^{-\frac{\pi \sqrt{-1}}{\log \lambda}} \right),
\end{equation}
where $z_n^{-\frac{\pi i}{\log \lambda}}$ is defined using the principal branch of the logarithm.
\begin{proposition}
	If $x,y\in \mathfrak{H}^n$, then $\Phi_{\lambda,\vartheta}(x) = \Phi_{\lambda,\vartheta}(y)$ if and only if $y = A_{\lambda,\vartheta}^k(x)$ for some $k \in \Z$.
\end{proposition}
\begin{proof}
	Suppose $y = A_{\lambda,\vartheta}(x)$. Then we have $y_n=\lambda^2 x_n $ and \[y_j=\lambda x_j e^{i \vartheta_j}, j<n.\] So $\frac{y_j}{\sqrt{y_n}}  = \frac{x_j}{\sqrt{x_n}},$ and $y_n^{\frac{\pi i}{\log \lambda}}=x_n^{\frac{\pi i}{\log \lambda}}$. And since $y_n^{-\frac{\vartheta_j \sqrt{-1}}{2\log \lambda}}=x_n^{-\frac{\vartheta_j \sqrt{-1}}{2\log \lambda}}e^{-\frac{\vartheta_j \sqrt{-1}}{2\log \lambda}\log \lambda^2}$ for $j<n$, we have $\Phi_{\lambda,\vartheta}(x) = \Phi_{\lambda,\vartheta}(y)$.

Conversely, suppose $\Phi_{\lambda,\vartheta}(x) = \Phi_{\lambda,\vartheta}(y)$. Then $y_n^{\frac{\pi i}{\log \lambda}}=x_n^{\frac{\pi i}{\log \lambda}}$, so $y_n = \lambda^{2k} x_n$ for some $k \in \Z$. Moreover,
	\[
	y_jy_n^{-1/2-\frac{\vartheta_j \sqrt{-1}}{2\log \lambda}} = x_jx_n^{-1/2-\frac{\vartheta_j \sqrt{-1}}{2\log \lambda}}, \quad j < n,
	\]implies \[y_j=\lambda^k x_j e^{k i \vartheta_j}, j<n.\] Thus, $y = A_{\lambda,\vartheta}^k(x)$.
\end{proof}
So $\Phi_{\lambda,\vartheta}$ descends to a holomorphic injective map $\iota_{\lambda,\vartheta}: X_{\lambda,\vartheta} \to \C^{n-1} \times \C_\eta^*$.

Let $(w_1, \ldots, w_{n-1}, w_n)$ be the coordinates on $\C^{n-1} \times \C_\eta^*$. Then
we have the following proposition.
\begin{proposition}\label{prop-biholog}
	$\Phi_{\lambda,\vartheta}$ is a local biholomorphism.
\end{proposition}
\begin{proof}
	We only need to check the Jacobian of $\Phi_{\lambda,\vartheta}$. It is straightforward to get that
	\[
 \frac{dw_n}{w_n}=-\frac{\pi i}{\log \lambda} \frac{dz_n}{z_n},
\]
and for $j < n$,
\begin{align*}
\frac{dw_j}{w_j} =  \frac{dz_j}{z_j}-(\frac12+ \frac{\vartheta_j i}{2\log \lambda} )\frac{dz_n}{z_n}.
\end{align*} 

Therefore, \begin{align*}
	dw_1 \wedge \cdots \wedge dw_n &= -(\prod_{j=1}^{n}\frac{w_j}{z_j})\frac{\pi i}{\log \lambda} dz_1 \wedge \cdots \wedge dz_n\\
	&= -\frac{\pi i}{\log \lambda}z_n^{-(n+1)/2-\frac{\sum_{j=1}^{n-1}\vartheta_j i}{2\log \lambda}-\frac{\pi i}{\log \lambda}} dz_1 \wedge \cdots \wedge dz_n.
\end{align*}
Thus, $\Phi_{\lambda,\vartheta}$ is a local biholomorphism.
\end{proof}
Therefore, $\iota_{\lambda,\vartheta}$ is a biholomorphism from $X_{\lambda,\vartheta}$ onto its image $\Phi_{\lambda,\vartheta}(\mathfrak{H}^n)$. In the following, we identify $X_{\lambda,\vartheta}$ with $\Phi_{\lambda,\vartheta}(\mathfrak{H}^n) \subset \C^{n-1} \times \C_\eta^*$.

\subsection{Setting-up on $X_{\lambda,\vartheta}$}
Write $z_n = |z_n| e^{i\theta_n}$, so $t_n \triangleq \log |w_n| = \frac{\pi}{\log \lambda} \theta_n$ and $\Re z_n = |z_n| \cos \theta_n$. Therefore,
\[
\Re z_n = |z_n| \cos\left(\frac{\log \lambda}{\pi} t_n\right).
\]

Since $|w_j|^2 = |z_j|^2 |z_n|^{-1} e^{\frac{\vartheta_j}{\log \lambda} \theta_n}$, we have
\begin{align}
	2\Re z_n - |z'|^2 &= |z_n| \left( 2\cos\left(\frac{\log \lambda}{\pi} t_n\right) - \sum_{j=1}^{n-1} |w_j|^2 e^{-\frac{\vartheta_j}{\log \lambda} \theta_n} \right) \\
	&= |z_n| \left( 2\cos\left(\frac{\log \lambda}{\pi} t_n\right) - \sum_{j=1}^{n-1} |w_j|^2 e^{-\frac{\vartheta_j}{\pi} t_n} \right).
\end{align}
Define \begin{equation}
	\kappa = 2\cos\left(\frac{\log \lambda}{\pi} t_n\right) - \sum_{j=1}^{n-1} |w_j|^2 e^{-\frac{\vartheta_j}{\pi} t_n}.
\end{equation}

Then
\[
\omega_{\lambda,\vartheta} = -2\sqrt{-1} \partial \bar{\partial} \log \kappa,
\]
\begin{proposition}\label{prop-X-s-psh}
	$-\log \kappa$ is a strictly plurisubharmonic exhaustion function of $X_{\lambda,\vartheta}$. Therefore, $X_{\lambda,\vartheta}$ is a Stein manifold.
\end{proposition}
\begin{proof}
	We only need to show that for each $0<c<1$, the set \[E_c=\{w\in X_{\lambda,\vartheta}\big| \kappa\geq c \} \] is compact. 

	Recall that $R_n=\{(0,t):t\in\R_{>0}\}$ is a real ine in $\mathfrak{H}^n$. Let $\Sigma_1$ and $\Sigma_{\lambda^2}$ be the images of the normal bundles of $R_n$ at $t=1$ and $t=\lambda^2$, respectively, under the exponential map. Let $F$ be the region bounded by $\Sigma_1$ and $\Sigma_{\lambda^2}$. Then, by Proposition \ref{prop-hn-normal}, the restriction of $\Phi_{\lambda,\vartheta}$ to $F$ is surjective onto $X_{\lambda,\vartheta}$.  Let $F_c=\Phi_{\lambda,\vartheta}^{-1}(E_c)\cap F$. Then \[F_c=\{z\in F\big|2\Re z_n - |z'|^2\geq c|z_n| \}. \] Since $\Re z_n\leq \lambda^2$ on $F$, we immediately get that $\Im z_n$ is bounded. We claim that $\exists \delta>0$ such that $\Re z_n\geq \delta$ for $z\in F_c$. Otherwise $|z|$ can be arbitrarily small, and would not be contained in $F$. Therefore, $F_c$ is compact. And so $E_c$ is compact.
\end{proof}

 $X_{\lambda,\vartheta}$ is clearly homotopic  to $S^1$, so every holomorphic line bundle over $X_{\lambda,\vartheta}$ is trivial. Since the map $\mathfrak{H}^1\to\C^*_\eta$ is surjective, it is easy to check that $X_{\lambda,\vartheta}$ is the open subset of $\C^{n-1} \times \C_\eta^*$ defined by $\kappa > 0$.

Denote by $|dz|^2=i^n dz_1\wedge d\bar{z}_1\wedge \cdots \wedge dz_n\wedge d\bar{z}_n$. Then the computation in the proof of Proposition \ref{prop-biholog} shows that \[|dz|^2=(\frac{\log \lambda}{\pi})^2|z_n|^{n+1}e^{(2\pi-\sum_{j=1}^{n-1} \vartheta_j)\frac{2t_n}{\pi}}|dw|^2. \]
So the volume form is 
\begin{equation}\label{e-omega-lambda-n}
	\frac{\omega_{\lambda,\vartheta}^n}{n!}=\frac{2^{n}\eta ^2}{\kappa^{n+1}|w_n|^2}e^{-\sum_{j=1}^{n-1} \vartheta_j\frac{t_n}{\pi}} |dw|^2.
\end{equation}
\begin{definition}
	We call the circle $\beta_1=\{(0,w_n)\in \C^{n-1}\times \C_\eta^*| |w_n|=1\}$, oriented counterclockwise, the central geodesic of $X_{\lambda,\vartheta}$.
\end{definition}

Let $h_{\lambda,\vartheta}=(\frac{\kappa}{2})^2$. Let $\hcal_{\text{cyl},k}$ be the Bergman space of holomorphic functions $f$ on $X_{\lambda,\vartheta}$ such that  \[\|f\|^2_{\text{cyl},k}\eqd\int_{X_{\lambda,\vartheta}}|f|^2h_{\lambda,\vartheta}^k\frac{\omega_{\lambda,\vartheta}^n}{n!}<\infty. \]
Let $\rho_{\text{cyl},k}$ be the Bergman kernel function of $\hcal_{\text{cyl},k}$. More generally, for a fixed number $\alpha\in [0,1)$, let $\mathfrak{m}_k=k\alpha-[k\alpha]$ be the decimal part. We can consider the Bergman space $\hcal_{\text{cyl},\alpha,k}$ consisting of holomorphic functions $f$ on $X_{\lambda,\vartheta}$ such that  \[\|f\|^2_{\text{cyl},\alpha,k}\eqd\int_{X_{\lambda,\vartheta}}|f|^2h_{\lambda,\vartheta}^k|w_n|^{-2\mathfrak{m}_k}\frac{\omega_{\lambda,\vartheta}^n}{n!}<\infty. \]
We call the Bergman kernel function $\rho_{\text{cyl},\alpha,k}$ of $\hcal_{\text{cyl},\alpha,k}$ the twisted Bergman kernel function. We should remark here that the effect of twisting $h_{\lambda,\vartheta}^k$ with $|w_n|^{-2\mathfrak{m}_k}$ is to change the holonomy around $\beta_1$ to be $e^{2\pi k\alpha i}$.
\begin{lemma}\label{lem-twist-metric}
	Let $h_{\alpha}=e^{-\phi}$ be a smooth function on $X_{\lambda,\vartheta}$, considered as a metric on the trivial line bundle, such that $\iddbar \phi=\omega_{\lambda,\vartheta}$ and the holonomy around $\beta_1$ is $e^{2\pi \alpha i}$. Then there exists a holomorphic function $g$ on $X_{\lambda,\vartheta}$ such that \[|g|^2h_{\alpha}=h_{\lambda,\vartheta}|w_n|^{-2\alpha}.\]
\end{lemma}
\begin{proof}
	By twisting both sides with $|w_n|^{2\alpha}$, we can assume that $\alpha=0$. So there exists a pluriharmonic function $u$ on $X_{\lambda,\vartheta}$ such that $\phi=-\log h_{\lambda,\vartheta}+u$. The holonomy condition implies that $\int_{\beta_1} \partial u=0$. Thus $u(0,\cdot)$ is the real part of a holomorphic function $f$ on $\C_\eta^*$. Since $Y_{w_n}\eqd (\C^{n-1}\times \{w_n\})\cap X_{\lambda,\vartheta}$ is a ellipsoid for each fixed $w_n$, we can extend $f$ to a holomorphic function $F_{w_n}$ on $Y_{w_n}$, such that $\Re F_{w_n}=u$. So we get a function $G(\cdot,w_n)\eqd F_{w_n}(\cdot)$ on $X_{\lambda,\vartheta}$. $G$ is clearly continuous. On the other hand, in a simply connected neighborhood $U$ of any point $(0,w_n)$, there is a unique holomorphic function $\tilde{G}$ such that $\Re \tilde{G}=u$ and $\tilde{G}\big|_{U\cap( \{0\}\times \C^*_\eta)}=f$. Since the conjugate pluriharmonic function $v$ of $u$ is unique up to an imaginary constant, we have $G=\tilde{G}$ on $U$, hence holomorphic on $U$. The same argument can be repeated to show that $G$ is holomorphic on $X_{\lambda,\vartheta}$.
\end{proof}
This lemma implies that the Bergman kernel function for $(X_{\lambda,\vartheta},h_{\alpha}^k,\frac{\omega_{\lambda,\vartheta}^n}{n!})$ is the same as $(X_{\lambda,\vartheta},h_{\lambda,\vartheta}^k|w_n|^{-2\mathfrak{m}_k},\frac{\omega_{\lambda,\vartheta}^n}{n!})$.

The main result of this section and Section~\ref{sec-holonomy} is the following theorem.
\begin{theorem}\label{thm-cyl-model-main}
	Theorem \ref{thm-main} holds for $(X_{\lambda,\vartheta},\omega_{\lambda,\vartheta})$, with $k_0=n+1$.
\end{theorem}

\subsection{Computation of the Bergman kernel function} To avoid complication of notations in the process of our computations, we first show the computations for the case when $\alpha=0$, namely the Hermitian metric for the line bundle being $h_{\lambda,\vartheta}^k$. It is then easy to get the general case from there.

Let
\[
\Z^n_H \eqd \{a=(a_1,\dots,a_n)\mid a_j\in\Z_{\ge 0}\ (1\le j\le n-1),\ a_n\in\Z\}.
\]
For \(w\in X_{\lambda,\vartheta}\) and \(a\in\Z^n_H\) write the multi-index power
\[
w^a \eqd \prod_{j=1}^n w_j^{a_j}.
\]
Since the measure \(h_{\lambda,\vartheta}^{k}\,\omega_{\lambda,\vartheta}^n\) is invariant under the standard \((S^1)^n\)-action, the monomials \(w^a\) are orthogonal:
\[
\langle w^a,w^b\rangle_{L^2(h_{\lambda,\vartheta}^k)}=0\qquad\text{whenever }a\neq b.
\]

For \(a=(a',a_n)\) with \(a'=(a_1,\dots,a_{n-1})\) we set
\[
I(a)\eqd \int_{X_{\lambda,\vartheta}} |w^a|^2 \,h_{\lambda,\vartheta}^{k+1}\,\frac{\omega_{\lambda,\vartheta}^n}{n!}.
\]

Using \(h_{\lambda,\vartheta}=(\kappa/2)^2\) and the expression for \(\omega_{\lambda,\vartheta}^n/n!\) obtained above, this integral can be written as
\begin{multline*}
	I(a',a_n)
= \int_{X_{\lambda,\vartheta}}
|w'^{a'}|^2|w_n^{a_n}|^2
\Big(\cos\!\Big(\frac{\log\lambda}{\pi}t_n\Big)
-\tfrac{1}{2}\sum_{j=1}^{n-1}|w_j|^2 e^{-\frac{\vartheta_j}{\pi}t_n}\Big)^{2k-n+1} \\ \times
\frac{\eta^2}{2|w_n|^2}\,e^{-\sum_{j=1}^{n-1} \vartheta_j\frac{t_n}{\pi}}\,|dw|^2.
\end{multline*}

The integral can be rearranged as
\begin{multline*}
\frac{\eta^2}{2}\int_{\C^*_\eta}
\Bigg[
\int_{\substack{\sum_{j=1}^{n-1}|w_j|^2 e^{-\frac{\vartheta_j}{\pi}t_n}<2\cos\!\big(\eta t_n\big)}}
|w'^{a'}|^2\left(1-\frac{\sum_{j=1}^{n-1}|w_j|^2 e^{-\frac{\vartheta_j}{\pi}t_n}}
{2\cos\!\big(\eta t_n\big)}\right)^{2k-n+1}\\
\times
e^{-\sum_{j=1}^{n-1}\vartheta_j\frac{t_n}{\pi}}\,|dw'|^2
\Bigg]
\cos^{2k-n+1}\!\bigg(\eta t_n\bigg)\;
|w_n|^{2a_n}\;\frac{|dw_n|^2}{|w_n|^2}.
\end{multline*}
Denote $|a'|=\sum_{j=1}^{n-1}a_j$.
Let $y_j=(2\cos\!\big(\eta t_n\big))^{-1/2}e^{-\frac{\vartheta_j}{2\pi}t_n}w_j $, then the inner integral becomes \[(2\cos\!\big(\eta t_n\big))^{n-1+|a'|}e^{\frac{t_n}{\pi}\sum_{j=1}^{n-1}a_j\vartheta_j}\int_{\sum_{j=1}^{n-1}|y_j|^2<1}\prod_{j=1}^{n-1}|y_j|^{2a_j}
(1-\sum_{j=1}^{n-1}|y_j|^2)^{2k-n+1}\,|dy'|^2.\] 
We denote\[G_{2k+1}(a')=\int_{y'\in \B_{n-1}}\prod_{j=1}^{n-1}|y_j|^{2a_j}(1-\sum_{j=1}^{n-1}|y_j|^2)^{2k-n+1}|dy'|^2 \]
which, when $a'=\mathbf{0}$, is equal to
\begin{align*}(\cos\!\big(\eta t_n\big))^{n-1+|a'|}\frac{(4\pi)^{n-1}(2k-n+1)!}{(2k)!}.
\end{align*}

Therefore,
\begin{eqnarray*}
\frac{I(a',a_n)}{2^{n-1+|a'|}}&=&\frac{\eta^2}{2}G_{2k+1}(a')\int_{\C^*_\eta}\cos^{2k+|a'|}\!\big(\eta t_n\big)\;|w_n|^{2a_n+\sum_{j=1}^{n-1}a_j\frac{\vartheta_j}{\pi}}\;\frac{|dw_n|^2}{|w_n|^2}\\
&=&2\pi G_{2k+1}(a') J(a_n+\sum_{j=1}^{n-1}a_j\frac{\vartheta_j}{2\pi}, k+\frac{|a'|}{2}+1,\eta)\\
\end{eqnarray*}

Consider the Bergman space  \[(\B^{n-1},(1-|z'|^2)^{2k+1},\frac{2^{n-1}}{(1-|z'|^2)^n}|dz'|^2 ).\]
On one hand, we know that the Bergman kernel equals $\frac{(2k)!}{(4\pi)^{n-1}(2k+1-n)!}$. On the other hand, this can be computed by using $\{(z')^{a'}\}_{a'\in\Z_{\geq 0}^{n-1}}$ as orthogonal basis:
\[(1-|z'|^2)^{2k+1}\sum_{a'\in\Z_{\geq 0}^{n-1}}\frac{|(z')^{a'}|^2}{2^{n-1}G_{2k+1}(a')}  \]

By using the Taylor expansion of $(1-|z'|^2)^{-2k-1}$, we get
\[\frac{1}{G_{2k+1}(a')} =\frac{(2k)!}{(2\pi)^{n-1}(2k+1-n)!}\binom{2k+|a'|}{|a'|} \frac{|a'|!}{a'!}, \]
where
  $a'!=\prod_{j=1}^{n-1}(a_j!)$.

So the Bergman kernel function of $(X_{\lambda,\vartheta},h_{\lambda,\vartheta}^k,\frac{\omega_{\lambda,\vartheta}^n}{n!})$ at $w=(w',w_n)$ is given by
\begin{eqnarray*}
\rho_{\text{cyl},k+1}(w)&=&(\frac{\kappa}{2})^{2k+2}\sum_{a'\in\Z_{\geq 0}^{n-1}}2^{1-n-|a'|}|w'^{a'}|^2\sum_{a_n\in\Z}\frac{|w_n|^{2a_n}}{I(a',a_n)}\\
=(\frac{\kappa}{2})^{2k+2}&\sum_{a'\in\Z_{\geq 0}^{n-1}}&\frac{2^{1-n-|a'|}|w'^{a'}|^2}{2\pi G_{2k+1}(a')}\sum_{a_n\in\Z}\frac{|w_n|^{2a_n}}{J(a_n+\sum_{j=1}^{n-1}a_j\frac{\vartheta_j}{2\pi}, k+\frac{|a'|}{2}+1,\eta)}.
\end{eqnarray*}


By Theorem \ref{thm-f-b} \[F_{k+1, \eta}(\xi)=F_{k+1, 1}(\eta\xi)=(k+\frac12)\cosh^{-2(k+1)}(\pi\eta\xi ). \]

So \begin{eqnarray*}
	&&\sum_{a_n\in\Z}\frac{|w_n|^{2a_n}}{J(a_n+\sum_{j=1}^{n-1}a_j\frac{\vartheta_j}{2\pi}, k+\frac{|a'|}{2}+1,\eta)}\\
	&=&\sum_{\xi\in \Z}e^{i\xi \sum_{j=1}^{n-1}a_j\vartheta_j}|w_n|^{\sum_{j=1}^{n-1}a_j\frac{\vartheta_j}{\pi}}F_{k+\frac{|a'|}{2}+1,\eta}(\xi+\frac{t_n}{\pi}i)\\
	&=&\sum_{\xi\in \Z}e^{i\xi \sum_{j=1}^{n-1}a_j\vartheta_j}|w_n|^{-\sum_{j=1}^{n-1}a_j\frac{\vartheta_j}{\pi}}(k+\frac{|a'|+1}{2})\cosh^{-2(k+1+\frac{|a'|}{2})}(\pi\eta\xi+\eta t_n i ) 
\end{eqnarray*}
Since \[(k+\frac{|a'|+1}{2})\binom{2k+|a'|}{|a'|} =(k+\frac12)\binom{2k+1+|a'|}{|a'|} , \]
we get that 
\begin{multline*}
	\sum_{a'\in\Z_{\geq 0}^{n-1}}\frac{e^{i\xi \sum_{j=1}^{n-1}a_j\vartheta_j}}{|w_n|^{\sum_{j=1}^{n-1}a_j\frac{\vartheta_j}{\pi}}}\frac{2^{1-n-|a'|}|w'^{a'}|^2}{ G_{2k+1}(a')}(k+\frac{|a'|+1}{2})\cosh^{-|a'|}(\pi\eta\xi+\eta t_n i ) \\
	=(k+\frac12)\frac{(2k)!}{ (4\pi)^{n-1}(2k+1-n)!}(1-\frac{\sum_{j=1}^{n-1}e^{i\xi \vartheta_j}|w_j|^2|w_n|^{-\frac{\vartheta_j}{\pi}}}{2\cosh(\pi\eta\xi+\eta t_n i )})^{-2k-2}
\end{multline*}

Therefore, the Bergman kernel function of $(X_{\lambda,\vartheta},h_{\lambda,\vartheta}^k,\frac{\omega_{\lambda,\vartheta}^n}{n!})$ at $w=(w',w_n)$ is given by
\begin{eqnarray*}
\rho_{\text{cyl},k+1}(w)&=&(\frac{\kappa}{2})^{2k+2}(k+\frac12)\frac{(2k)!}{ 2\pi(4\pi)^{n-1}(2k+1-n)!}\\
&\times&\sum_{\xi\in \Z}(\cosh(\pi\eta\xi+\eta t_n i )-\frac12\sum_{j=1}^{n-1}e^{i\xi \vartheta_j}|w_j|^2|w_n|^{-\frac{\vartheta_j}{\pi}})^{-2k-2}.
\end{eqnarray*}
When $\xi=0$, \[(\frac{\kappa}{2})^{2k+2}(\cosh(\eta t_n i )-\frac12\sum_{j=1}^{n-1}|w_j|^2|w_n|^{-\frac{\vartheta_j}{\pi}})^{-2k-2}=1.\] 
So the main term of the Bergman kernel function is \[(k+\frac12)\frac{(2k)!}{ 2\pi(4\pi)^{n-1}(2k+1-n)!}=\frac{1}{(4\pi)^n} \frac{(2k+1)!}{(2k-n+1)!}.\]

For general $\alpha\in [0,1)$,  we set
\[
I_\alpha(a)\eqd \int_{X_{\lambda,\vartheta}} |w^a|^2 \,h_{\lambda,\vartheta}^{k+1}|w_n|^{-2\mathfrak{m}_{k+1}}\,\frac{\omega_{\lambda,\vartheta}^n}{n!}.
\]
Then, clearly, $I_\alpha(a',a_n)=I(a',a_n-\mathfrak{m}_{k+1})$. Thus, $\rho_{\text{cyl},\alpha,k+1}(w)$ equals
\begin{eqnarray*}
&=&(\frac{\kappa}{2})^{2k+2}|w_n|^{-2\mathfrak{m}_{k+1}}\sum_{a'\in\Z_{\geq 0}^{n-1}}2^{1-n-|a'|}|w'^{a'}|^2\sum_{a_n\in\Z}\frac{|w_n|^{2a_n}}{I(a',a_n-\mathfrak{m}_{k+1})}\\
&=&(\frac{\kappa}{2})^{2k+2}\sum_{a'\in\Z_{\geq 0}^{n-1}}\frac{2^{1-n-|a'|}|w'^{a'}|^2}{2\pi G_{2k+1}(a')}\sum_{a_n\in\Z}\frac{|w_n|^{2(a_n-\mathfrak{m}_{k+1})}}{J(a_n-\mathfrak{m}_{k+1}+\sum_{j=1}^{n-1}a_j\frac{\vartheta_j}{2\pi}, k+\frac{|a'|}{2}+1,\eta)}.
\end{eqnarray*}
Consequently, we get \begin{multline}\label{e-cyl-twist-bergman}
	\rho_{\text{cyl},\alpha,k+1}(w)=(\frac{\kappa}{2})^{2k+2}(k+\frac12)\frac{(2k)!}{ 2\pi(4\pi)^{n-1}(2k+1-n)!}\\
\times\sum_{\xi\in \Z}(\cosh(\pi\eta\xi+\eta t_n i )-\frac12\sum_{j=1}^{n-1}e^{i\xi \vartheta_j}|w_j|^2|w_n|^{-\frac{\vartheta_j}{\pi}})^{-2k-2}e^{-2\pi\xi\mathfrak{m}_{k+1} i }.
\end{multline}

\section{Computation of the holonomy}\label{sec-holonomy}
\subsection{Formulas on the ball model}

	Let $z=(z_1,z_2,\dots,z_n)$ be the coordinates of $\B^n$. Write $|z|^2=\sum_{i=1}^n |z_i|^2$.
	Let $\varphi_0=-2\log (1-|z|^2)$. Then, \[\partial \varphi_0 =\frac{2\bar{z}\cdot dz}{1-|z|^2}.\]
\begin{lemma}\label{lem-ball-holonomy}
	Let $x,y\in \B^n$. Let $[x,y]$ be the oriented geodesic from $x$ to $y$. Then
	\[\int_{[x,y]} \partial \varphi_0=\log \frac{1-|x|^2}{1-|y|^2}+2\sqrt{-1}\arg(1-x\cdot \bar{y}).\]
\end{lemma}
\begin{proof}
	Recall that the identification map from $\B^n$ to $\CH^n$ is given by $z\mapsto [z,1]$. We denote by $A:\CH^n\to \B^n$ the inverse map. Let $\tilde{x}=\sqrt{\frac{1}{1-|x|^2}}[x,1]$, $\tilde{y}=\sqrt{\frac{1}{1-|y|^2}}[y,1]$. Then we have $\langle \tilde{x},\tilde{x}\rangle=-1$, $\langle \tilde{y},\tilde{y}\rangle=-1$ and
	 \[\langle \tilde{x},\tilde{y}\rangle=\frac{x\cdot \bar{y}-1}{(1-|x|^2)(1-|y|^2)}.\]
	 Let $\phi=\arg(1-x\cdot \bar{y})$. Then \[\langle \tilde{x},e^{i\phi}\tilde{y}\rangle=\frac{(x\cdot \bar{y}-1)e^{-i\phi}}{(1-|x|^2)(1-|y|^2)}<0.\]
	 Let $P=\tilde{x}$, $Q=e^{i\phi}\tilde{y}$, then the real line segment $[P,Q]$, given by $X(t)=(\frac12-t)P+(\frac12+t)Q$, $t\in [-\frac12,\frac12]$, is mapped to the geodesic $[x,y]$, with $t$ not the length parameter. Write $P=[P_1,\cdots,P_{n+1}]$, $Q=[Q_1,\cdots,Q_{n+1}]$ and $X(t)=[X_1,\cdots,X_{n+1}]$. Then \[AX(t)=(\frac{X_1(t)}{X_{n+1}(t)},\cdots,\frac{X_n(t)}{X_{n+1}(t)}).\]
	So $A^*(dz_i)=\frac{dX_i(t)}{X_{n+1}(t)}-\frac{X_idX_{n+1}(t)}{X^2_{n+1}(t)}$ and \[A^*(\bar{z}_idz_i)=\frac{\overline{X_i(t)}dX_i(t)}{|X_{n+1}(t)|^2}-\frac{|X_{i}(t)|^2dX_{n+1}(t)}{|X_{n+1}(t)|^2X_{n+1}(t)}.\]
	Thus,
	\begin{eqnarray*}
		A^*(\partial \varphi_0 )&=&\frac{2|X_{n+1}(t)|^2}{|X_{n+1}(t)|^2-\sum_{j=1}^{n}|X_{i}(t)|^2}\sum_{j=1}^{n}\left(\frac{\overline{X_i(t)}dX_i(t)}{|X_{n+1}(t)|^2}-\frac{|X_{i}(t)|^2dX_{n+1}(t)}{|X_{n+1}(t)|^2X_{n+1}(t)}\right)\\
		&=&\frac{2}{|X_{n+1}(t)|^2-\sum_{j=1}^{n}|X_{i}(t)|^2}\sum_{j=1}^{n}\left(\overline{X_i(t)}dX_i(t)-\frac{|X_{i}(t)|^2dX_{n+1}(t)}{X_{n+1}(t)}\right)\\
		&=&2\frac{dX_{n+1}(t)}{X_{n+1}(t)}+\frac{2\left(\sum_{j=1}^{n}\overline{X_j(t)}dX_j(t)-\overline{X_{n+1}(t)}dX_{n+1}(t)\right)}{|X_{n+1}(t)|^2-\sum_{j=1}^{n}|X_{i}(t)|^2}.
	\end{eqnarray*}
	
	We have\begin{eqnarray*}
		|X_{n+1}(t)|^2-\sum_{j=1}^{n}|X_{i}(t)|^2&=&\langle (\frac12-t)P+(\frac12+t)Q,(\frac12-t)P+(\frac12+t)Q \rangle\\ &=&-(\frac12+2t^2)+(\frac12-2t^2)\langle P,Q \rangle,
	\end{eqnarray*}
	and 
	\[\overline{X_j(t)}dX_j(t)=(\frac{1}{2}(\bar{P}_j+\bar{Q}_j)+t(\bar{Q}_j-\bar{P}_j))(Q_j-P_j)dt.\]
	So we can calculate the integral
	\begin{multline*}
		\int_{[P,Q]}A^*(\partial \varphi_0 )=\int_{-1/2}^{1/2}2\frac{dX_{n+1}(t)}{X_{n+1}(t)}\\ +\int_{-1/2}^{1/2}\frac{2(\sum_{j=1}^{n}\overline{X_j(t)}dX_j(t)-\overline{X_{n+1}(t)}dX_{n+1}(t))}{|X_{n+1}(t)|^2-\sum_{j=1}^{n}|X_{i}(t)|^2}.
	\end{multline*}
	
	By symmetry, the second integral equals \begin{eqnarray*}
		\int_{-1/2}^{1/2}\frac{(\sum_{j=1}^{n}((\bar{P}_j+\bar{Q}_j)(Q_j-P_j)-(\bar{P}_{n+1}+\bar{Q}_{n+1})(Q_{n+1}-P_{n+1})))dt}{|X_{n+1}(t)|^2-\sum_{j=1}^{n}|X_{i}(t)|^2}.
	\end{eqnarray*}
	The numerator of the integrand equals \[\langle Q,P\rangle-\langle P,Q\rangle =2\sqrt{-1}\Im(\langle Q,P\rangle)=0.\]
	Thus \[\int_{[P,Q]}A^*(\partial \varphi_0 )=\int_{-1/2}^{1/2}2\frac{dX_{n+1}(t)}{X_{n+1}(t)}=2\log \frac{Q_{n+1}}{P_{n+1}}=2\phi\sqrt{-1}+\log \frac{1-|x|^2}{1-|y|^2}.\]
\end{proof}

\subsection{Holonomy computation}
Let $T$ be a point on the central geodesic $\beta_1=\{w\in X_{\lambda,\vartheta}\big| w'=0,|w_n|=1 \}$. For each oriented loop $\gamma\subset \C^{n-1}\times \C^*$, we define the winding number of $\gamma$ as
\begin{equation}
	\text{wind}(\gamma)=\frac{1}{2\pi i}\int_{\gamma}\frac{1}{w_n}dw_n.
\end{equation}

Let $\beta_m$ be the geodesic loop based at $T$ with winding number $m$ (just winding along $\beta_1$, counterclockwise, $m$ times). Clearly, the holonomy of $\nabla$ along $\beta_m$ does not depend on the choice of $T$. So we denote it by $\text{Hol}_{\nabla}(m\beta_1)$.
\begin{theorem}\label{thm-holonomy-computation}
	Let $p\in X_{\lambda,\vartheta}$, $m\in\Z^*$. Let $\gamma_m$ be the geodesic loop based at $p$ with winding number $m$. Then the holonomy of $\nabla$ along $\gamma_m$ satisfies:
	\begin{equation}
		\arg \text{Hol}_{\nabla}(\gamma_m)-\arg \text{Hol}_{\nabla}(m\beta_1)=-2\arg\bigg(\cosh\!\big(\pi\eta m + \mathrm{i}\,\eta t_n\big)
	- \displaystyle\sum_{j=1}^{n-1}\frac{e^{\mathrm{i}m\vartheta_j}\,|w_j|^2}
	{2\,|w_n|^{\vartheta_j/\pi}}\bigg)
	\end{equation}
\end{theorem}
\begin{proof}

Let $q\in \{w'=0\}\subset X_{\lambda,\vartheta}$ satisfy the following condition:
\[d(p,q)=\min_{q'\in \{w\in X_{\lambda,\vartheta}\big| w'=0\}} d(p,q'). \]
By Lemma \ref{lem-dist-line}, such $q$ is unique.
Let $[p,q]$ be the geodesic segment from $p$ to $q$ with length $d(p,q)$.
We can assume that $m\geq 1$. Let $\tilde{p}$ be a lift of $p$ to $\mathfrak{H}_n$.
Then $[p,q]$ is lifted to a geodesic segment from $\tilde{p}$ to a point $\tilde{q}\in \{z\in \mathfrak{H}^n\big| z'=0\}$. And $\gamma_m$ lifts to the geodesic $\tilde{\gamma}$ from $\tilde{p}$ to $A_{\lambda,\vartheta}^m(\tilde{p})$. $[p,q]$ is also lifted to a geodesic segment from $A_{\lambda,\vartheta}^m(\tilde{p})$ to a point $\tilde{q}'\in \{z\in \mathfrak{H}^n\big| z'=0\}$, which is just $A_{\lambda,\vartheta}^m(\tilde{q})$. 

The image of the geodesic segment $[\tilde{q},\tilde{q}']$ under $\Phi_{\lambda,\vartheta}$ is then a geodesic loop $\sigma_m$ based at $q$ with winding number $m$. Since the restriction of $\nabla$ to the copy of $\C^*_\eta=\{ w\in X_{\lambda,\vartheta}\big| w'=0\}$ is just the Chern connection for $(\C^*_\eta,\cos^2(\eta t_n))$. Then by Proposition \ref{prop-f-twist} and Theorem \ref{thm-holonomy-cylinder}, we have \[\arg \text{Hol}_{\nabla}(\sigma_m)-\arg \text{Hol}_{\nabla}(m\beta_1)=-2\arg(\cosh\!\big(\pi\eta m + \mathrm{i}\,\eta t_n\big)).\]
So, we only need to show that \[\arg \text{Hol}_{\nabla}(\gamma_m)-\arg \text{Hol}_{\nabla}(\sigma_m)=-2\arg\bigg(1
	- \displaystyle\sum_{j=1}^{n-1}\frac{e^{\mathrm{i} m \vartheta_j}\,|w_j|^2|w_n|^{-\vartheta_j/\pi}}
	{2\,\cosh\!\big(\pi\eta m  + \mathrm{i}\,\eta t_n\big)}\bigg) \]


Let $\tilde{q}=(0,z_n)$, then by Proposition \ref{prop-dist-line}, the $n$-th coordinate $\tilde{p}$ is also $z_n$. So $\tilde{p}=(z',z_n)$.

Clearly, $\rho_{k+1}(w)$ is $(S^1)^n$-symmetric, so we can assume that the image of $\tilde{q}$ in $(\B^n,x)$, under the Cayley transform $\ccal_{HB}$, satisfies the condition that $\ccal_{HB}(\tilde{q})$ and $\ccal_{HB}(\tilde{q}')$ are symmetric about the imaginery axis of $x_n$, namely $\ccal_{HB}(\tilde{q})=(0,\alpha)$ and $\ccal_{HB}(\tilde{q}')=(0,-\bar{\alpha})$. Let $\alpha=u+\sqrt{-1}v$. Then, since $\ccal_{HB}$ maps the origin to $(0,1)$ and the infinity to $(0,-1)$, it is clear that $u>0$. From the equations \[z_n=\frac{1-\alpha}{2(1+\alpha)}, \quad \frac{1-2\lambda^{2m}z_n}{1+2\lambda^{2m}z_n}=-u+\sqrt{-1}v, \]
we get that \begin{equation}
	\lambda^{2m}=\frac{(1+u)^2+v^2}{(1-u)^2+v^2}, \quad \Re(z_n)=\frac{1-u^2-v^2}{2((1+u)^2+v^2)}, \quad \Im(z_n)=\frac{-v}{(1+u)^2+v^2}.
\end{equation}

Since $|w_j|^2=\frac{|z_j|^2}{|z_n|}e^{\frac{\vartheta_j}{\pi}t_n}$, for $j\leq n-1$, we have
\begin{eqnarray*}
	\frac{\sum_{j=1}^{n-1}e^{i m  \vartheta_j}|w_j|^2}{2|w_n|^{\frac{\vartheta_j}{\pi}}\cosh(\pi\eta m +\eta t_n i )}&=& \frac{\sum_{j=1}^{n-1}e^{i m  \vartheta_j}|z_j|^2}{2|z_n|\cosh(\pi\eta m +\eta t_n i )}\\
	&=& \frac{\sum_{j=1}^{n-1}e^{i m  \vartheta_j}|z_j|^2}{2(\cosh( m \log\lambda)\Re(z_n)+i \sinh( m \log\lambda)\Im(z_n) )}\\
	&=& \frac{\lambda^ m \sum_{j=1}^{n-1}e^{i m  \vartheta_j}|z_j|^2}{(1+\lambda^{2 m })\Re(z_n)+i(\lambda^{2 m }-1)\Im(z_n) }.
\end{eqnarray*}
So \begin{eqnarray*}
	&&\frac{\lambda^{m}e^{i m \vartheta_j}|z_j|^2}{(1+\lambda^{2m})\Re(z_n)+i(\lambda^{2m}-1)\Im(z_n) }\\
	&=&\frac{2|1+\alpha|^2|1-\alpha|^2\lambda^{m}e^{i m \vartheta_j}|z_j|^2}{(|1+\alpha|^2+|1-\alpha|^2)(1-|\alpha|^2)+((|1+\alpha|^2-|1-\alpha|^2))(\bar{\alpha}-\alpha)}\\
&=&\frac{|1+\alpha|^2|1-\alpha|^2}{1-|\alpha|^4+(\bar{\alpha}^2-\alpha^2)}\lambda^{m}e^{i m \vartheta_j}|z_j|^2=\frac{1-\bar{\alpha}^2}{1+\bar{\alpha}^2}\lambda^{m}e^{i m \vartheta_j}|z_j|^2.
\end{eqnarray*}
In summary, we have 
\begin{equation}\label{e-alpha-bar}
	\displaystyle\sum_{j=1}^{n-1}\frac{e^{\mathrm{i} m \vartheta_j}\,|w_j|^2|w_n|^{-\vartheta_j/\pi}}
	{2\,\cosh\!\big(\pi\eta m  + \mathrm{i}\,\eta t_n\big)}=\frac{1-\bar{\alpha}^2}{1+\bar{\alpha}^2}\lambda^{m}e^{i m \vartheta_j}|z_j|^2.
\end{equation}

 Since $L$ is trivial, for simplicity, we also use $\nabla$ to denote the corresponding $(1,0)$-form of $\nabla$. So we have \[i\big(\arg \text{Hol}_{\nabla}(\gamma_m)-\arg \text{Hol}_{\nabla}(\sigma_m)\big) =\int_{[\tilde p, A_{\lambda,\vartheta}^m(\tilde p)]}\Phi_{\lambda,\vartheta}^*\nabla - \int_{[\tilde q, A_{\lambda,\vartheta}^m(\tilde q)]}\Phi_{\lambda,\vartheta}^*\nabla.\]
By Stokes formula, we have \begin{eqnarray*}
	\int_{[\tilde p, A_{\lambda,\vartheta}^m(\tilde p)]}\Phi_{\lambda,\vartheta}^*\nabla &-& \int_{[\tilde q, A_{\lambda,\vartheta}^m(\tilde q)]}\Phi_{\lambda,\vartheta}^*\nabla+\int_{[A_{\lambda,\vartheta}^m(\tilde p), A_{\lambda,\vartheta}^m(\tilde q)]}\Phi_{\lambda,\vartheta}^*\nabla-\int_{[\tilde p, \tilde q]}\Phi_{\lambda,\vartheta}^*\nabla\\= i\int_{\Sigma}\omega=&&\\
\int_{[\tilde p, A_{\lambda,\vartheta}^m(\tilde p)]}\partial \phi_0& -& \int_{[\tilde q, A_{\lambda,\vartheta}^m(\tilde q)]}\partial \phi_0+\int_{[A_{\lambda,\vartheta}^m(\tilde p), A_{\lambda,\vartheta}^m(\tilde q)]}\partial \phi_0-\int_{[\tilde p, \tilde q]}\partial \phi_0.	
\end{eqnarray*}
Since both $[\tilde p, \tilde q]$ and $[A_{\lambda,\vartheta}^m(\tilde p), A_{\lambda,\vartheta}^m(\tilde q)]$ are lifts of the geodesic segment $[p,q]$, we have \[\int_{[A_{\lambda,\vartheta}^m(\tilde p), A_{\lambda,\vartheta}^m(\tilde q)]}\Phi_{\lambda,\vartheta}^*\nabla-\int_{[\tilde p, \tilde q]}\Phi_{\lambda,\vartheta}^*\nabla=0. \]
And, by Lemma \ref{lem-ball-holonomy}, it is straightforward to check that \[\int_{[A_{\lambda,\vartheta}^m(\tilde p), A_{\lambda,\vartheta}^m(\tilde q)]}\partial \phi_0-\int_{[\tilde p, \tilde q]}\partial \phi_0=0.\]

Thus,
\begin{equation}\label{e-hol-partial-phi}
	i\big(\arg \text{Hol}_{\nabla}(\gamma_m)-\arg \text{Hol}_{\nabla}(\sigma_m)\big)=\int_{[\tilde p, A_{\lambda,\vartheta}^m(\tilde p)]}\partial \phi_0 - \int_{[\tilde q, A_{\lambda,\vartheta}^m(\tilde q)]}\partial \phi_0.
\end{equation}

By Lemma \ref{lem-ball-holonomy} again, we have
\[
\int_{\mathcal{C}_{HB}\big([\tilde p, A_{\lambda,\vartheta}^m(\tilde p)]\big)} \partial\varphi_0
=2i\,\arg\!\Bigg(
1
-\frac{4\lambda^{m}\sum_{j=1}^{n-1} e^{-i m\vartheta_j}\,|z_j|^2}
{(1+2z_n)(1+2\lambda^{2m}\overline{z}_n)}
-\frac{(1-2z_n)(1-2\lambda^{2m}\overline{z}_n)}
{(1+2z_n)(1+2\lambda^{2m}\overline{z}_n)}
\Bigg).
\]
Since $\frac{2}{1+2z_n}=1+\alpha$ and $\frac{2}{1+2\lambda^{2m}\overline{z}_n}=1-\alpha$, we have
\[-\frac{4\lambda^{m}\sum_{j=1}^{n-1} e^{-i m\vartheta_j}\,|z_j|^2}
{(1+2z_n)(1+2\lambda^{2m}\overline{z}_n)}=(1-\alpha^2)\lambda^{m}\sum_{j=1}^{n-1} e^{-i m\vartheta_j}\,|z_j|^2, \]
and $\frac{(1-2z_n)(1-2\lambda^{2m}\overline{z}_n)}{(1+2z_n)(1+2\lambda^{2m}\overline{z}_n)}=-\alpha^2$.
Thus \begin{equation}
	\int_{\mathcal{C}_{HB}\big([\tilde p, A_{\lambda,\vartheta}^m(\tilde p)]\big)} \partial\varphi_0
=2i\,\arg\!\Bigg(
1
-(1-\alpha^2)\lambda^{m}\sum_{j=1}^{n-1} e^{-i m\vartheta_j}\,|z_j|^2
+\alpha^2
\Bigg).
\end{equation}
Similarly 
\begin{equation}
	\int_{\mathcal{C}_{HB}\big([\tilde q, A_{\lambda,\vartheta}^m(\tilde q)]\big)} \partial\varphi_0
=2i\,\arg\!\Big(
1+\alpha^2
\Big).
\end{equation}
Thus \begin{equation}
	\int_{\mathcal{C}_{HB}\big([\tilde p, A_{\lambda,\vartheta}^m(\tilde p)]\big)} \partial\varphi_0-\int_{\mathcal{C}_{HB}\big([\tilde q, \tilde q']\big)} \partial\varphi_0=2i\,\arg\!\Bigg(1
-\frac{1-\alpha^2}{1+\alpha^2}\lambda^{m}\sum_{j=1}^{n-1} e^{-i m\vartheta_j}\,|z_j|^2
\Bigg).
\end{equation}
Since the right hand side of \eqref{e-alpha-bar} is the conjugate of $\frac{1-\alpha^2}{1+\alpha^2}\lambda^{m}\sum_{j=1}^{n-1} e^{-i m\vartheta_j}\,|z_j|^2$, by \eqref{e-hol-partial-phi},
we have proved the Theorem.
\end{proof}

Let $\ell_m$ denote the length of the geodesic segment $[\tilde{p},A_{\lambda,\vartheta}^m(\tilde{p})]$. Let \[\tilde{p}=(z_1,\cdots,z_n),\] then \[A_{\lambda,\vartheta}^m(\tilde{p})=(\lambda^m e^{im\vartheta_1} z_1,\cdots,\lambda^m e^{im\vartheta_{n-1}} z_{n-1},\lambda^{2m} z_n). \] 
So we have \begin{equation}\label{eqn-length-gamma-m}
	\cosh\frac{\ell_m}{2}=\frac{|\lambda^{2m}\bar{z}_n+ z_n-\lambda^m \sum_{j=1}^{n-1}e^{-im\vartheta_j}|z_j|^2|}{\lambda^m(2\Re z_n-\sum_{j=1}^{n-1}|z_j|^2)}.
\end{equation}
\begin{proposition}\label{prop-length-gamma-m}
	\[
	\bigg(\cosh\frac{\ell_m}{2}\bigg)^{-1}
	= \frac{\kappa}{2\left|\,
	\cosh\!\big(\pi\eta m - \mathrm{i}\,\eta t_n\big)
	- \displaystyle\sum_{j=1}^{n-1}\frac{e^{-\mathrm{i}m\vartheta_j}\,|w_j|^2}
	{2\,|w_n|^{\vartheta_j/\pi}}
	\right|}.
	\]
\end{proposition}
\begin{proof}
	The right hand side equals 
	\begin{eqnarray*}
		&&\frac{2|z_n|\cos\left(\frac{\log \lambda}{\pi} t_n\right) - \sum_{j=1}^{n-1} |z_n||z_j|^2 e^{-\frac{\vartheta_j}{\pi} t_n}}{\big|2|z_n|\cosh(\pi\eta m -\eta t_n i )-\frac{\sum_{j=1}^{n-1}e^{-i m  \vartheta_j}|z_n||w_j|^2}{|w_n|^{\frac{\vartheta_j}{\pi}}}\big|}\\
		&=&\frac{2\Re(z_n) - \sum_{j=1}^{n-1} |z_j|^2 }{\big|2\Re(z_n)\cosh(\pi\eta m )-i2\Im(z_n)\sinh(\pi\eta m )-\sum_{j=1}^{n-1}e^{-i m  \vartheta_j}|z_j|^2\big|} \\
		&=&\frac{\lambda^m(2\Re(z_n) - \sum_{j=1}^{n-1} |z_j|^2) }{\big||\Re(z_n)(1+\lambda^{2m})-i\Im(z_n)(\lambda^{2m}-1)-\lambda^m\sum_{j=1}^{n-1}e^{-i m  \vartheta_j}|z_j|^2\big|}\\
		&=&\frac{\lambda^m(2\Re(z_n) - \sum_{j=1}^{n-1} |z_j|^2) }{\big||z_n+\lambda^{2m}\bar{z_n}-\lambda^m\sum_{j=1}^{n-1}e^{-im \vartheta_j}|z_j|^2\big|}.
	\end{eqnarray*}
So the conclusion follows from \eqref{eqn-length-gamma-m}.
\end{proof}

\begin{proof}[Proof of Theorem \ref{thm-cyl-model-main}]
	By Lemma \ref{lem-twist-metric}, it suffices to prove that the Bergman kernel $\rho_{\text{cyl},\alpha,k}$ of $(X_{\lambda,\vartheta},h_{\lambda,\vartheta}^k|w_n|^{-2\mathfrak{m}_k},\frac{\omega_{\lambda,\vartheta}^n}{n!})$ satisfies the conclusion of the main theorem. Since the holonomy of the Chern connection of $h_{\lambda,\vartheta}^{k+1}w_n|^{-2\mathfrak{m}_{k+1}}$ along the geodesic loop $\beta_1$ is just $e^{2\pi \mathfrak{m}_{k+1}i}$, by Formula \ref{e-cyl-twist-bergman}, Theorem \ref{thm-holonomy-computation} and Proposition \ref{prop-length-gamma-m}, we get the conclusion. 
	
\end{proof}

\section{Complex hyperbolic cusp of type I}\label{sec-cusp-I}
By Theorem \ref{thm-parabolic-conjugacy}, we can assume that $P$ is in the standard form for type I or type II. 
If $P=N(0,b)U$ with $U\in U(n-1)$, we can perform a unitary change of coordinates so that $U=\diag(e^{i\vartheta_1},\cdots,e^{i\vartheta_{n-1}})$. So
\begin{equation}\label{eqn-standard-parabolic-1}
	P(z',z_n)=(e^{i\vartheta}z',z_n+bi).
\end{equation}

If $P=N(v,0)U$ with \[U=\begin{bmatrix}
	U'& 0 \\
	0 & I_m
  \end{bmatrix},\quad U'\in U(n-m-1),\] and $ v=(a_1,\cdots,a_{n-1}) \text{ with } a_j=0\text{ for }j\leq n-m-1$, we can perform a unitary change of coordinates so that 
  \begin{itemize}
	\item $a_j=0$ for $j<n-1$;
	\item $a_{n-1}=bi$ for some $b>0$;
	\item $U=\diag(e^{i\vartheta_1},\cdots,e^{i\vartheta_{n-2}},1)$ with $\vartheta_j=0$ for $j\geq n-m$.
  \end{itemize}
    So \begin{equation}\label{eqn-standard-parabolic-2}
	P(z',z_n)=(e^{i\vartheta_1}z_1,\cdots,e^{i\vartheta_{n-2}}z_{n-2},z_{n-1}+bi ,z_n-ibz_{n-1}+\frac12b^2).
\end{equation}

\subsection{Construction of complex hyperbolic cusp of type I}
Given \[\vartheta=(\vartheta_1,\cdots,\vartheta_{n-1}), \quad \vartheta_j\in [-\pi,\pi], j=1,\cdots,n-1,\]and $b>0$, define $A^I_{\vartheta,b}\in \Aut(\mathfrak{H}_n)$ by \[A^I_{\vartheta,b}(z',z_n)=(e^{i\vartheta}z',z_n+b i).\] Let $S^I_{\vartheta,b}$ be the cyclic group generated by $A^I_{\vartheta,b}$. Let $X^I_{\vartheta,b}$ be the quotient space $\mathfrak{H}_n/S^I_{\vartheta,b}$. 

Define a holomorphic map $\Psi^I_{\vartheta,b}:\mathfrak{H}_n\to \C^{n-1}\times \C^*$ by \begin{equation}
	\Psi^I_{\vartheta,b}(z',z_n)=(z_1 e^{-\frac{\vartheta_1}{b}z_n},\cdots,z_{n-1} e^{-\frac{\vartheta_{n-1}}{b}z_n},e^{\frac{2\pi}{b}z_n}).
\end{equation}

\begin{proposition}
	$\Psi^{I}_{\vartheta,b}(x_1,\cdots,x_{n-1},x_n)=\Psi^{I}_{\vartheta,b}(y_1,\cdots,y_{n-1},y_n)$ if and only if $y=(A^{I}_{\vartheta,b})^m(x)$ for some $m\in \Z$.
\end{proposition}
\begin{proof}
	If $y=(A^{I}_{\vartheta,b})^m(x)$, then it is easy to verify that $\Psi^{I}_{\vartheta,b}(x)=\Psi^{I}_{\vartheta,b}(y)$. Conversely, if $\Psi^{I}_{\vartheta,b}(x)=\Psi^{I}_{\vartheta,b}(y)$, then we immediately get that $y_{n}=x_{n}+mbi$ for some $m\in \Z$. Thus \[y_j e^{-\frac{\vartheta_j}{b}y_{n}}=x_j e^{-\frac{\vartheta_j}{b}x_{n}}\] implies that $y_j=x_j e^{\frac{\vartheta_j}{b}mbi}$ for $j\leq n-1$. So we have
	$y=(A^{I}_{\vartheta,b})^m(x)$ for some $m\in \Z$.
\end{proof}
Let $w=(w',w_n)$ be the coordinates of $\C^{n-1}\times \C^*$. Let $\tau_n\eqd \log |w_n|$. Then $\tau_n=\frac{2\pi}{b}\Re z_n$. 
\begin{proposition}\label{prop-biholog-I}
	$\Psi^{I}_{\vartheta,b}$ is a local biholomorphism.
\end{proposition}
\begin{proof}
	We only need to check the Jacobian of $\Psi^{I}_{\vartheta,b}$. It is straightforward to calculate that
	\[
 \frac{dw_n}{w_n}=\frac{2\pi }{b} dz_n,\quad dw_j=e^{-\frac{\vartheta_j}{b}z_n}dz_j-z_je^{-\frac{\vartheta_j}{b}z_n}\frac{\vartheta_j}{b}dz_n,\quad j<n.
\]
So \begin{equation}\label{eqn-jacobian-I}
	\frac{1}{w_n}dw_1\wedge \cdots \wedge dw_n= \frac{2\pi }{b}(\prod_{j=1}^{n-1}e^{-\frac{\vartheta_j}{b}z_n})dz_1 \wedge \cdots \wedge dz_n.
\end{equation}

Thus, $\Psi^{I}_{\vartheta,b}$ is a local biholomorphism.
\end{proof}
Therefore, we can identify $\ccal^{I}_{\vartheta,b}$ with the image of $\Psi^{I}_{\vartheta,b}$. Since 
\[ |z_j|^2 =|w_j|^2e^{2\frac{\vartheta_j}{b}\Re z_n}= |w_j|^2e^{\frac{\vartheta_j}{\pi}\tau_n}, j<n,\]
we have
\[
2\Re z_n-\sum_{j=1}^{n-1}|z_j|^2 =\frac{b}{\pi}\tau_n-\sum_{j=1}^{n-1}|w_j|^2e^{\frac{\vartheta_j}{\pi}\tau_n}.
\]
Let $\kappa_I=\frac{b}{\pi}\tau_n-\sum_{j=1}^{n-1}|w_j|^2e^{\frac{\vartheta_j}{\pi}\tau_n}$, then $\ccal^{I}_{\vartheta,b}$ is the subset of $\C^{n-1}\times \C^*$ defined by $\kappa_I>0$. Let $\omega_{I}$ be the \kahler form on $\ccal^{I}_{\vartheta,b}$ inherited from $\mathfrak{H}_n$. Clearly, we have \begin{equation}
	-2\iddbar \log \kappa_I=\omega_{I}.
\end{equation}
\begin{proposition}
	$-\log \kappa_I+\tau_n$ is a strictly plurisubharmonic exhaustion function on $\ccal^{I}_{\vartheta,b}$. So $\ccal^{I}_{\vartheta,b}$ is a Stein manifold.
\end{proposition}
\begin{proof}
	We only need to show that, for $0<c<1$, the set $E_c=\{w\in \ccal^{I}_{\vartheta,b} \big| -\log \kappa_I+\tau_n\leq c \}$ is compact. The condition is equivalent to $\kappa_I\geq e^{-c}e^{\tau_n}$.
	Similar to the proof of Proposition \ref{prop-X-s-psh}, we define \[F\eqd \{z\in \mathfrak{H}^n \big|0\leq \Im z_n\leq b  \}. \]
	Then the restriction of $\Psi^{I}_{\vartheta,b}$ to $F$ is surjective onto $\ccal^{I}_{\vartheta,b}$. Let $F_c=(\Psi^{I}_{\vartheta,b})^{-1}(E_c)\cap F$. Then \[F_c=\{z\in F\big|2\Re z_n-\sum_{j=1}^{n-1}|z_j|^2 \geq e^{-c}e^{\frac{2\pi}{b}\Re z_n}\}. \]
	So $\Re z_n$ is bounded, hence $|z'|$ being bounded. And since $\Re z_n\geq \frac12e^{-c}$, we conclude that $F_c$ is compact. Thus $E_c$ is also compact.
\end{proof}
And since $\ccal^{I}_{\vartheta,b}$ is clearly homotopic  to $S^1$, every holomorphic line bundle over $\ccal^{I}_{\vartheta,b}$ is trivial. Since $\Psi^{I}_{\vartheta,b}$ can be extended to $\C^n$, it is easy to see that $\ccal^{I}_{\vartheta,b}$ is the subset of $\C^{n-1}\times \C^*$ defined by $\kappa_I>0$.

Denote by $\Psi^{I,-1}_{\vartheta,b}$ a local inverse of $\Psi^{I}_{\vartheta,b}$.
By \eqref{eqn-jacobian-I}, we have \begin{eqnarray*}
	\frac{\omega_{I}^n}{n!}&=& (\Psi^{I,-1}_{\vartheta,b})^*\frac{\omega_{H}^n}{n!} \\
	&=& \frac{2^{n-2}b^2 }{\pi^2(\frac{b}{\pi}\tau_n-\sum_{j=1}^{n-1}|w_j|^2e^{\frac{\vartheta_j}{\pi}\tau_n})^{n+1}}e^{2\Re \frac{z_n}{b}\sum_{j=1}^{n-1}\vartheta_j} \frac{|dw|^2}{|w_n|^2}\\
	&=& \frac{2^{n-2}b^2 }{\pi^2(\frac{b}{\pi}\tau_n-\sum_{j=1}^{n-1}|w_j|^2e^{\frac{\vartheta_j}{\pi}\tau_n})^{n+1}}|w_n|^{ \frac{1}{\pi}\sum_{j=1}^{n-1}\vartheta_j-2}|dw|^2 .
\end{eqnarray*}

\

Let $\hcal_{I,k}$ be the Bergman space consisting of holomorphic functions $f$ on $\ccal^{I}_{\vartheta,b}$ such that
\begin{equation}
	\|f\|^2_{I,k}=\int_{\ccal^{I}_{\vartheta,b}} |f|^2 \kappa_I^{2k} \frac{\omega_{I}^n}{n!}<+\infty.
	\end{equation}
Let $\rho_{I,k}$ be the Bergman kernel function of $\hcal_{I,k}$.	

For a fixed number $\alpha\in [0,1)$, recall that $\mathfrak{m}_k=k\alpha-[k\alpha]$ is the decimal part. We can consider the Bergman space $\hcal_{I,\alpha,k}$ consisting of holomorphic functions $f$ on $X_{\lambda,\vartheta}$ such that  \[\|f\|^2_{I,\alpha,k}\eqd\int_{\ccal^{I}_{\vartheta,b}} |f|^2 \kappa_I^{2k} |w_n|^{-2\mathfrak{m}_k}\frac{\omega_{I}^n}{n!}<\infty. \]
We denote by $\rho_{I,\alpha,k}$ the Bergman kernel function of $\hcal_{I,\alpha,k}$.

The main result of this section is the following theorem.
\begin{theorem}\label{thm-main-cusp-model-I}
	Theorem \ref{thm-main} holds for $(\ccal^{I}_{\vartheta,b},\omega_I)$, with $k_0=n+1$.
\end{theorem}

\subsection{Approximation by complex hyperbolic cylinders}

Define $E^{I}_{\vartheta,\lambda,b}:X_{\lambda,-\vartheta}\to \C^{n-1}\times \C^*$ by 
\begin{equation}
	E^{I}_{\vartheta,\lambda,b}(z_1,\cdots,z_{n-1},z_n)=\big(\sqrt{\frac{b}{2\log\lambda}}e^{-\frac{\vartheta_1\pi}{4\log \lambda}} z_1,\cdots,\sqrt{\frac{b}{2\log\lambda}}e^{-\frac{\vartheta_{n-1}\pi}{4\log \lambda}}z_{n-1},e^{\frac{\pi^2}{2\log\lambda}} z_n\big).
\end{equation}
\begin{proposition}\label{prop-biholom-I}
	$E^{I}_{\vartheta,\lambda,b}$ is a biholomorphism from $X_{\lambda,\vartheta}$ to an open subset of $\ccal^{I}_{\vartheta,b}$.
	\end{proposition}
\begin{proof}
	Clearly, $E^{I}_{\vartheta,\lambda,b}$ is injective and a local biholomorphism. Since $\tau_n=t_n+\frac{\pi^2}{2\log\lambda}$, we have $\frac{\log \lambda}{\pi}t_n=\frac{\log \lambda}{\pi}\tau_n-\frac{\pi}{2}$. Thus, \[\cos(\frac{\log \lambda}{\pi}t_n)=\sin(\frac{\log \lambda}{\pi}\tau_n). \]
	Since $|w_j|^2=\frac{b}{2\log\lambda}e^{-\frac{\vartheta_{j}\pi}{2\log \lambda}}|z_j|^2$, we have that $2\cos(\frac{\log \lambda}{\pi}t_n)-\sum_{j=1}^{n-1}|z_j|^2e^{\frac{\vartheta_j}{\pi}t_n}$ equals
	\begin{eqnarray*}
		&&2\sin(\frac{\log \lambda}{\pi}\tau_n)-\sum_{j=1}^{n-1}\frac{2\log\lambda}{b}e^{\frac{\vartheta_{j}\pi}{2\log \lambda}}|w_j|^2e^{\frac{\vartheta_j}{\pi}\tau_n-\frac{\vartheta_{j}\pi}{2\log \lambda}}\\
		&=&2\sin(\frac{\log \lambda}{\pi}\tau_n)-\frac{2\log\lambda}{b}\sum_{j=1}^{n-1}|w_j|^2e^{\frac{\vartheta_j}{\pi}\tau_n}.
	\end{eqnarray*}
	Thus, \begin{equation}\label{eq-cyl-cusp-I}
		2\cos(\frac{\log \lambda}{\pi}t_n)-\sum_{j=1}^{n-1}|z_j|^2e^{\frac{\vartheta_j}{\pi}t_n}=\frac{2\log\lambda}{b}\big(\frac{b}{\log\lambda}\sin(\frac{\log \lambda}{\pi}\tau_n)-\sum_{j=1}^{n-1}|w_j|^2e^{\frac{\vartheta_j}{\pi}\tau_n} \big).
	\end{equation}
	Since $\tau_n\geq 0$, we have $\frac{b}{\log\lambda}\sin(\frac{\log \lambda}{\pi}\tau_n)\leq \frac{b}{\pi}\tau_n$. So the proposition follows from \eqref{eq-cyl-cusp-I}.

\end{proof}
We denote by \begin{equation}
	\tilde{X}_{\lambda,I} = E^{I}_{\vartheta,\lambda,b}(X_{\lambda,-\vartheta}).
\end{equation}
Recall that the Hermitian metric on the trivial line bundle on $X_{\lambda,-\vartheta}$ is defined by $h_{\lambda,-\vartheta}=\frac14(2\cos(\frac{\log \lambda}{\pi}t_n)-\sum_{j=1}^{n-1}|z_j|^2e^{\frac{\vartheta_j}{\pi}t_n})^2$. On $\tilde{X}_{\lambda,I}$, we define a Hermitian metric on the trivial line bundle by rescaling that on $X_{\lambda,-\vartheta}$ as follows: \begin{equation}
	\tilde{h}_{\lambda,I}=(\frac{\log \lambda}{b})^2(2\cos(\frac{\log \lambda}{\pi}t_n)-\sum_{j=1}^{n-1}|z_j|^2e^{\frac{\vartheta_j}{\pi}t_n})^2.
\end{equation}
By \eqref{eq-cyl-cusp-I}, the following proposition is straightforward.
\begin{proposition}\label{prop-h-lambda-tilde-I}
	On each compact subset $K\subset \ccal^{I}_{\vartheta,b}$, we have that $\tilde{h}_{\lambda,I}$ converges to $h_{I}=\kappa_I^2$ in the $C^5$-topology as $\lambda\to 1^+$.
\end{proposition}
Let $\tilde{\omega}_{\lambda,I}$ be the pull-back of the \kahler form on $X_{\lambda,-\vartheta}$ to $\tilde{X}_{\lambda,I}$ via the inverse of $E^{I}_{\vartheta,\lambda,b}$.
This proposition implies that $\tilde{\omega}_{\lambda,I}$ converges to $\omega_I$ in $C^3(K)$ topology for each compact subset $K\subset \ccal^{I}_{\vartheta,b}$.
From \eqref{e-omega-lambda-n}, one easily get that 
\begin{equation}\label{e-tilde-omega-n}
	\frac{\tilde{\omega}_{\lambda,I}^n}{n!}=\frac{2^{n-2}b^2}{\pi^2\big(\frac{b}{\log\lambda}\sin(\frac{\log \lambda}{\pi}\tau_n)-\sum_{j=1}^{n-1}|w_j|^2e^{\frac{\vartheta_j}{\pi}\tau_n} \big)^{n+1}}|w_n|^{ \frac{1}{\pi}\sum_{j=1}^{n-1}\vartheta_j-2}|dw|^2 .
\end{equation}

\eqref{eq-cyl-cusp-I} also implies \begin{equation}\label{eqn-h-tilde-leq-h-I}
	\tilde{h}_{\lambda,I}\leq h_I \text{ on }\tilde{X}_{\lambda,I}.
\end{equation}
 Let $\tilde{\rho}_{\lambda,I,\alpha,k}$ denote the Bergman kernel function of $(\tilde{X}_{\lambda,I},\tilde{h}_{\lambda,I}^k|w_n|^{-2\mathfrak{m}_k},\frac{\tilde{\omega}_{\lambda,I}^n}{n!})$, and $\rho_{\lambda,I,\alpha,k}$ denote the Bergman kernel function of $(X_{{\lambda,-\vartheta}},h_{\lambda,-\vartheta}^k|z_n|^{-2\mathfrak{m}_k},\frac{\omega_{\lambda,-\vartheta}^n}{n!})$. Then, since rescaling the Hermitian metric by a constant factor does not change the Bergman kernel function, we have \begin{equation}\label{eqn-rho-tilde-rho}
	\tilde{\rho}_{\lambda,I,\alpha,k}(E^{I}_{\vartheta,\lambda,b}(p))= \rho_{\lambda,-\vartheta,\alpha,k}(p),
\end{equation}for each $p\in X_{\lambda,-\vartheta}$.

\begin{theorem}\label{thm-rho-tilde-convergence-I}
	Suppose that $2k\geq n+1$.
	For each point $p\in \ccal^{I}_{\vartheta,b}$, we have 
	\begin{equation}
		\lim_{\lambda\to 0^+}\tilde{\rho}_{\lambda,I,\alpha,k}(p)=\rho_{I,\alpha,k}(p).
	\end{equation}
\end{theorem}
\begin{proof}
	Let $r_p$ be the injectivity radius of $(\ccal^{I}_{\vartheta,b},\omega_I)$ at $p$. Then, for $\lambda-1$ small enough, the injectivity radius of $(\tilde{X}_{\lambda,I},\tilde{\omega}_{\lambda,I})$ at $p$ is greater than $\frac12r_p$. So $\rho_{\lambda,-\vartheta,\alpha,k}(p)$ is bounded. Let $\{\lambda_j\}$ be a sequence such that $\lambda_j\to 1^+$ and \[\lim_{j\to \infty}\tilde{\rho}_{\lambda_j,\vartheta,\alpha,k}(p)\] converges. Let $s_j$ be a peak section of $\hcal_{\lambda_j,\vartheta,\alpha,k}$ at $p$. On each compact subset $K\subset \ccal^{I}_{\vartheta,b}$, by Proposition \ref{prop-h-lambda-tilde}, the $L_2$-norm $\left\|s_j \right\|_{K,I,\alpha,k}$ of $s_j$ with respect to the metric $(h_I^kk|w_n|^{-2\mathfrak{m}_k},\frac{\omega_I^n}{n!})$ is $<2$ for $j$ sufficiently large. So we can find a subsequence of $s_j$ that converges uniformly on $K$. By a diagonal argument, we can find subsequence $s_{j_l}$ that converges uniformly on each compact $K\subset \ccal^{I}_{\vartheta,b}$. Let $s$ be the limit of $s_{j_l}$. Then on each compact subset $K\subset \ccal^{I}_{\vartheta,b}$, we have \[\left\|s \right\|_{K,I,k}=\lim_{l\to \infty}\left\|s_{j_l} \right\|_{K,\tilde{h}_{\lambda_{j_l},\vartheta}^k|w_n|^{-2\mathfrak{m}_k}}\leq 1. \]
	So \[\left\|s \right\|_{\ccal^{I}_{\vartheta,b},\alpha,k}\leq 1.\] Since the norm of $s$ at $p$ is the limit of $\sqrt{\tilde{\rho}_{\lambda,-\vartheta,\alpha,k}(p)}$,
	this implies that \[\rho_{I,\alpha,k}(p)\geq \lim_{l\to\infty}\tilde{\rho}_{\lambda_j,\vartheta,\alpha,k}(p).\]
	Thus, we have \begin{equation}
		\limsup_{\lambda\to 1^+}\tilde{\rho}_{\lambda,-\vartheta,\alpha,k}(p)\leq \rho_{I,\alpha,k}(p).
	\end{equation}

	On the other hand, let $s$ be a peak section of $\hcal_{I,k}$ at $p$. By Proposition \ref{prop-biholom-I} and Formulas \eqref{eqn-h-tilde-leq-h-I} and \eqref{e-tilde-omega-n}, we have $s $ is $L_2$-integrable with respect to $(X_{\lambda,-\vartheta},h_{\lambda,-\vartheta}^k|w_n|^{-2\mathfrak{m}_k},\frac{\omega_{\lambda,-\vartheta}^n}{n!})$ and \[ \left\|s \right\|_{X_{\lambda,-\vartheta},h_{\lambda,-\vartheta}^k|w_n|^{-2\mathfrak{m}_k}}\leq 1.\] So \[\left|s(p)\right|^2_{\tilde{h}_{\lambda,I}^k|w_n|^{-2\mathfrak{m}_k}}\leq \tilde{\rho}_{\lambda,-\vartheta,\alpha,k}(p).\]
	 This implies that \begin{equation}
		\rho_{I,k}(p)=\lim_{\lambda\to 1^+} \left|s(p)\right|^2_{\tilde{h}_{\lambda,I}^k|w_n|^{-2\mathfrak{m}_k}}\leq  \liminf_{\lambda\to 1^+}\tilde{\rho}_{\lambda,-\vartheta,\alpha,k}(p).
		\end{equation}
	 Therefore, the theorem follows.
	
\end{proof}

For each point $p\in \ccal^{I}_{\vartheta,b}$ and each $m\in \Z^*$, it is easy to see that there is exactly one oriented geodesic loop $\gamma_{p,m}^I$, with respect to $(\ccal^{I}_{\vartheta,b},\omega_I)$, based at $p$ such that the winding number of $\gamma$ at $p$ is $m$. More precisely, let $\tilde{p}$ be a lift of $p$ in $\mathfrak{H}_n$, then $\gamma_{p,m}^I$ is the projection of the geodesic segment $[\tilde{p},(A^{I}_{\vartheta,b})^m(\tilde{p})]$ to $\ccal^{I}_{\vartheta,b}$. Similarly, we also denote by $\gamma_{p,m,\lambda}$ the oriented geodesic loop, with respect to $(\tilde{X}_{\lambda,I},\tilde{\omega}_{\lambda,I})$, based at $p$ such that the winding number of $\gamma_{p,m,\lambda}$ at $p$ is $m$.
\begin{theorem}\label{thm:geodesic_loop-I}
	For each point $p\in \ccal^{I}_{\vartheta,b}$ and each $m\in \Z^*$, $\gamma_{p,m,\lambda}$ converges to $\gamma_{p,m}^I$ as $\lambda\to 1^+$.
\end{theorem}
\begin{proof}
	We first show that the lengh of $\gamma_{p,m,\lambda}$ converges to the length of $\gamma_{p,m}^I$ as $\lambda\to 1^+$. Let $p=(w',w_n)$ and let $\tilde{p}=(z',z_n)$ be a lift of $p$ in $\mathfrak{H}_n$. Let $\ell_{I,m}$ denote the length of the geodesic segment $[\tilde{p},(A^{I}_{\vartheta,b})^m(\tilde{p})]$. We have 
	\begin{eqnarray*}
		\cosh\frac{\ell_{I,m}}{2}&=&\frac{\big|z_n+\bar{z}_n-mbi-\sum_{j=1}^{n-1}e^{-im\vartheta_j}|z_j|^2 \big|}{2\Re z_n-\sum_{j=1}^{n-1}|z_j|^2}\\
		&=&\frac{\big|\frac{b}{\pi}\tau_n -mbi-\sum_{j=1}^{n-1}e^{-im\vartheta_j}|w_j|^2e^{\frac{\vartheta_j}{\pi}\tau_n}  \big|}{\frac{b}{\pi}\tau_n-\sum_{j=1}^{n-1}|w_j|^2e^{\frac{\vartheta_j}{\pi}\tau_n} }.
	\end{eqnarray*}
	
	Let $q\in X_{\lambda,-\vartheta}$ be the image of $p$ under the inverse of $E^{I}_{\vartheta,\lambda,b}$. Let $q=(x',x_n)$ and let $\tilde{q}=(z',z_n)$ be a lift of $q$ in $\mathfrak{H}_n$.
	Let $\ell_{m,\lambda}$ denote the length of the geodesic segment $[\tilde{q},A_{\lambda,-\vartheta}^m(\tilde{q})]$. We use the notation $z_n=|z_n|e^{i\theta_n}$ and $t_n=\log |x_n|$.

	By \eqref{eqn-length-gamma-m}, we have 
	\begin{eqnarray*}
		\cosh\frac{\ell_{m,\lambda}}{2}&=&\frac{\big|\lambda^{2m}\bar{z}_n+ z_n-\lambda^m \sum_{j=1}^{n-1}e^{im\vartheta_j}|z_j|^2 \big|}{\lambda^m(2\Re z_n-\sum_{j=1}^{n-1}|z_j|^2)}\\
		&=&\frac{\big|\lambda^{m}e^{-i\theta_n} +\lambda^{-m} e^{i\theta_n}-\sum_{j=1}^{n-1}e^{im\vartheta_j}|x_j|^2e^{\frac{\vartheta_j}{\log \lambda}\theta_n}  \big|}{2\cos(\frac{\log\lambda}{\pi}t_n)-\sum_{j=1}^{n-1}|x_j|^2e^{\frac{\vartheta_j}{\pi}t_n} }\\
		 &=&\frac{|f(\lambda,w)|}{\frac{2\log\lambda}{b}\big(\frac{b}{\log\lambda}\sin(\frac{\log \lambda}{\pi}\tau_n)-\sum_{j=1}^{n-1}|w_j|^2e^{\frac{\vartheta_j}{\pi}\tau_n} \big)},
	\end{eqnarray*}
where $f(\lambda,w)$ equals \[(\lambda^{m}+\lambda^{-m})\cos(\frac{\log\lambda}{\pi}t_n) -i(\lambda^{m}-\lambda^{-m})\sin(\frac{\log\lambda}{\pi}t_n)-\sum_{j=1}^{n-1}e^{im\vartheta_j}|x_j|^2e^{\frac{\vartheta_j}{\pi}t_n},\]
which further transforms to
\begin{equation}\label{eqn-f-lambda-w}
	(\lambda^{m}+\lambda^{-m})\sin(\frac{\log\lambda}{\pi}\tau_n)+i(\lambda^{m}-\lambda^{-m})\cos(\frac{\log\lambda}{\pi}\tau_n)-\frac{2\log\lambda}{b}\sum_{j=1}^{n-1}e^{im\vartheta_j}|w_j|^2e^{\frac{\vartheta_j}{\pi}\tau_n}.
\end{equation}

Then since \[lim_{\lambda\to 1^+}(\lambda^{m}+\lambda^{-m})\frac{b}{2\log\lambda}\sin(\frac{\log\lambda}{\pi}\tau_n)=\frac{b}{\pi}\tau_n, \]
and \[lim_{\lambda\to 1^+}(\lambda^{m}-\lambda^{-m})\frac{b}{2\log\lambda}\cos(\frac{\log\lambda}{\pi}\tau_n)=mb, \]
we have 
\[\lim_{\lambda\to 1^+}\cosh\frac{\ell_{m,\lambda}}{2}=\frac{\big|\frac{b}{\pi}\tau_n +mbi-\sum_{j=1}^{n-1}e^{im\vartheta_j}|w_j|^2e^{\frac{\vartheta_j}{\pi}\tau_n}  \big|}{\frac{b}{\pi}\tau_n-\sum_{j=1}^{n-1}|w_j|^2e^{\frac{\vartheta_j}{\pi}\tau_n} }=\cosh\frac{\ell_{I,m}}{2}.\]

 We fix $m$. For each $\eta>0$, we denote by $v_\lambda$ the initial unit tangent vector of $\gamma_{p,m,\lambda}$. If a sequence $\{\lambda_j\}$ converges to $1^+$, then by compactness of the unit tangent bundle, we can extract a subsequence $\{\lambda_{j_k}\}$ such that $v_{\lambda_{j_k}}$ converges to a unit vector $v$. Since $\tilde{\omega}_{\lambda,I}$ converges to $\omega_I$ in $C^3(K)$ topology for each compact subset $K\subset \ccal^{I}_{\vartheta,b}$, by the standard theory of ODE, $\gamma_{p,m,\lambda_{j_k}}$ converges to a geodesic $\gamma_v$, with respect to $(\ccal^{I}_{\vartheta,b},\omega_I)$, starting at $p$ with initial vector $v$. Since the length of $\gamma_{p,m,\lambda}$ converges, $\{\ell_{m,\lambda}\}_\lambda<\delta$ is bounded. Thus, $\gamma_v$ is a geodesic loop based at $p$.
Clearly $\gamma_v$ also has winding number $m$, hence, $\gamma_v=\gamma_{p,m}^I$. Since the limit is unique, we have that $v_\eta$ converges to the initial unit vector of $\gamma_{w,m}$ as $\eta\to 0$, and that $\gamma_{p,m,\lambda}$ converges to $\gamma_{p,m}^I$. This completes the proof.
\end{proof}

\begin{proof}[Proof of Theorem \ref{thm-main-cusp-model-I}]
	The conclusion of Lemma \ref{lem-twist-metric} also holds for $\ccal^{I}_{\vartheta,b}$. Thus, we can assume that the Hermitian metric on the trivial line bundle over $\ccal^{I}_{\vartheta,b}$ is $h_I|w_n|^{-2\alpha}$ for some $\alpha\in [0,1)$. And for $k\geq 1$, the Hermitian metric on the trivial line bundle over $\ccal^{I}_{\vartheta,b}$ is $h_I^k|w_n|^{-2\mathfrak{m}_k}$, where $\mathfrak{m}_k=k\alpha-[k\alpha]$.

	We adapt the notations in Theorem \ref{thm:geodesic_loop-I}. For each point $p\in \ccal^{I}_{\vartheta,b}$, and for each geodesic loop $\gamma_{p,m}$ (with respect to $\omega_I$), we denote by $e^{2\pi \alpha_{p,m}i}$ the holonomy of the Chern connection of $h_I|w_n|^{-2\alpha}$ along $\gamma_m$. Similarly, for each $\lambda>1$ and for each geodesic loop $\gamma_{p,m,\lambda}$ (with respect to $\tilde{\omega}_{\lambda,I}$), we denote by $e^{2\pi \alpha_{p,m,\lambda}i}$ the holonomy of the Chern connection of $\tilde{h}_{\lambda,I}|w_n|^{-2\alpha}$ along $\gamma_{p,m,\lambda}$. Then by Proposition \ref{prop-h-lambda-tilde-I} Theorem \ref{thm:geodesic_loop-I}, we have \[\lim_{\lambda\to 1^+}e^{2\pi \alpha_{p,m,\lambda}i}=e^{2\pi \alpha_{p,m}i}. \]

	By Theorem \ref{thm-rho-tilde-convergence-I}, we only need to show that \begin{equation}
		\lim_{\lambda\to 1^+}\sum_{m\in \Z^*}\cosh^{-2k}\frac{\ell_{I,m,\lambda}}{2}e^{-2\pi k\alpha_{p,m,\lambda} i}=\sum_{m\in \Z^*}\cosh^{-2k}\frac{\ell_{I,m}}{2}e^{-2\pi k\alpha_{p,m} i}.
		\label{eqn-rho-tilde-series-I}
		\end{equation}

	For $\log\lambda$ small enough, we have $\cos(\frac{\log\lambda}{\pi}\tau_n)>\frac12$.
	Since $\frac{\lambda^{m}-\lambda^{-m}}{2}=\sinh(m\log\lambda)$ and $\sinh x\geq x$ for $x\geq 0$, we have \[\frac{2}{2\log\lambda}\cos(\frac{\log\lambda}{\pi}\tau_n)>\frac{m}{2}. \]Thus, \eqref{eqn-f-lambda-w} implies that there exist $ \lambda_0>0, c>0$ independent of $m$ and $m_0$ such that \[\cosh\frac{\ell_{m,\lambda}}{2}>cm,\] for $m\geq m_0$ and $\lambda\leq \lambda_0$. Then, since $\sum m^{-2k}$ converges, by Theorem \ref{thm:geodesic_loop-I}, dominated convergence theorem implies \eqref{eqn-rho-tilde-series-I}. This completes the proof of Theorem \ref{thm-main-cusp-model-I}.
\end{proof}

\section{Complex hyperbolic cusp of type II}\label{sec-cusp-II}
\subsection{Construction of complex hyperbolic cusp of type II}

Given \[\vartheta=(\vartheta_1,\cdots,\vartheta_{n-2}), \quad \vartheta_j\in [-\pi,\pi], j=1,\cdots,n-2,\] and $b>0$, define $A^{II}_{\vartheta,b}\in \Aut(\mathfrak{H}_n)$ by \begin{equation*}
	A^{II}_{\vartheta,b}(z'',z_{n-1},z_n)=(e^{i\vartheta}z'',z_{n-1}+bi,z_n-ibz_{n-1}+\frac12b^2),
\end{equation*}
where $z''=(z_1,\cdots,z_{n-2})$. Let $S^{II}_{\vartheta,b}$ be the cyclic group generated by $A^{II}_{\vartheta,b}$. Let $\ccal^{II}_{\vartheta,b}$ be the quotient space $\mathfrak{H}_n/S^{II}_{\vartheta,b}$. 
Define $\Psi^{II}_{\vartheta,b}:\mathfrak{H}_n\to \C^{n-2}\times \C^*\times \mathfrak{H}_1$ by 
\begin{equation}
	\Psi^{II}_{\vartheta,b}(z_1,\cdots,z_{n-1},z_n)=(z_1 e^{-\frac{\vartheta_1}{b}z_{n-1}},\cdots,z_{n-2} e^{-\frac{\vartheta_{n-2}}{b}z_{n-1}},e^{\frac{2\pi}{b}z_{n-1}}, z_n+\frac{1}{2}z_{n-1}^2).
\end{equation}
\begin{proposition}
	$\Psi^{II}_{\vartheta,b}(x_1,\cdots,x_{n-1},x_n)=\Psi^{II}_{\vartheta,b}(y_1,\cdots,y_{n-1},y_n)$ if and only if $y=(A^{II}_{\vartheta,b})^m(x)$ for some $m\in \Z$.
\end{proposition}
\begin{proof}
	If $y=(A^{II}_{\vartheta,b})^m(x)$, then it is easy to verify that $\Psi^{II}_{\vartheta,b}(x)=\Psi^{II}_{\vartheta,b}(y)$. Conversely, if $\Psi^{II}_{\vartheta,b}(x)=\Psi^{II}_{\vartheta,b}(y)$, then we immediately get that $y_{n-1}=x_{n-1}+mbi$ for some $m\in \Z$. Thus \[y_j e^{-\frac{\vartheta_j}{b}y_{n-1}}=x_j e^{-\frac{\vartheta_j}{b}x_{n-1}}\] implies that $y_j=x_j e^{\frac{\vartheta_j}{b}mbi}$ for $j\leq n-2$. Finally, from \[y_n+\frac{1}{2}y_{n-1}^2=x_n+\frac{1}{2}x_{n-1}^2,\] we get that $y_n=x_n-imbx_{n-1}+\frac12 (mb)^2$. So we have
	$y=(A^{II}_{\vartheta,b})^m(x)$ for some $m\in \Z$.

\end{proof}
Let $w=(w',w_n)$ be the coordinates of $\C^{n-1}\times \C^*$. Let $\tau_{n-1}\eqd \log |w_{n-1}|$. Then $\tau_{n-1}=\frac{2\pi}{b}\Re z_{n-1}$. 
\begin{proposition}\label{prop-biholog-I}
	$\Psi^{II}_{\vartheta,b}$ is a local biholomorphism.
\end{proposition}
\begin{proof}
	We only need to check the Jacobian of $\Psi^{II}_{\vartheta,b}$. It is straightforward to calculate that
	\[
 \frac{dw_{n-1}}{w_{n-1}}=\frac{2\pi }{b} dz_{n-1},\quad dw_n=dz_n+z_{n-1}dz_{n-1}
\]
and \[dw_j=e^{-\frac{\vartheta_j}{b}z_{n-1}}dz_j-z_je^{-\frac{\vartheta_j}{b}z_{n-1}}\frac{\vartheta_j}{b}dz_n,\quad j<n-1.\]
So \begin{equation}\label{eqn-jacobian-II}
	\frac{1}{w_{n-1}}dw_1\wedge \cdots \wedge dw_n= \frac{2\pi }{b}(\prod_{j=1}^{n-2}e^{-\frac{\vartheta_j}{b}z_{n-1}})dz_1 \wedge \cdots \wedge dz_n.
\end{equation}

Thus, $\Psi^{II}_{\vartheta,b}$ is a local biholomorphism.
\end{proof}

Therefore, we can identify $\ccal^{II}_{\vartheta,b}$ with the image of $\Psi^{II}_{\vartheta,b}$. 

Since $2\Re w_n=2\Re z_n+(\Re z_{n-1})^2-(\Im z_{n-1})^2$, we have 
\[2\Re z_n-|z_{n-1}|^2=2\Re w_n-2(\Re z_{n-1})^2=2\Re w_n-\frac{b^2}{2\pi^2}\tau_{n-1}^2. \]
And since $|z_j|^2=|w_j|^2e^{\frac{2\vartheta_j}{b}\Re z_{n-1}}=|w_j|^2e^{\frac{\vartheta_j}{\pi}\tau_{n-1}}$, we have \begin{equation}
	2\Re z_n-\sum_{j=1}^{n-1}|z_j|^2=2\Re w_n-\frac{b^2}{2\pi^2}\tau_{n-1}^2-\sum_{j=1}^{n-2}|w_j|^2e^{\frac{\vartheta_j}{\pi}\tau_{n-1}}.
\end{equation}
Let $\kappa_{II}=2\Re w_n-\frac{b^2}{2\pi^2}\tau_{n-1}^2-\sum_{j=1}^{n-2}|w_j|^2e^{\frac{\vartheta_j}{\pi}\tau_{n-1}}$, then $\ccal^{II}_{\vartheta,b}$ is the subset of $\C^{n-1}\times \C^*$ defined by $\kappa_{II}>0$.
Let $\omega_{II}$ be the \kahler form on $\ccal^{II}_{\vartheta,b}$ inherited from $\mathfrak{H}^n$.

\begin{proposition}
	$-\log \kappa_{II}+2\Re w_n$ is a strictly plurisubharmonic exhaustion function on $\ccal^{II}_{\vartheta,b}$. So $\ccal^{II}_{\vartheta,b}$ is a Stein manifold.
\end{proposition}
\begin{proof}
	We only need to show that, for $0<c<1$, the set $E_c=\{w\in \ccal^{II}_{\vartheta,b} \big| -\log \kappa_{II}+2\Re w_n\leq c \}$ is compact. The condition is equivalent to $\kappa_{II}\geq e^{-c}e^{2\Re w_n}$.
	We define \[F\eqd \{z\in \mathfrak{H}^n \big|0\leq \Im z_{n-1}\leq b  \}. \]
	Then the restriction of $\Psi^{II}_{\vartheta,b}$ to $F$ is surjective onto $\ccal^{II}_{\vartheta,b}$. Let $F_c=(\Psi^{II}_{\vartheta,b})^{-1}(E_c)\cap F$. Then \[F_c=\{z\in F\big|2\Re z_n-\sum_{j=1}^{n-1}|z_j|^2 \geq e^{-c}e^{2\Re z_n+(\Re z_{n-1})^2-(\Im z_{n-1})^2}\}. \]
	So \[2\Re z_n\leq e^{-c-b^2}e^{2\Re z_n}. \] Thus, $2\Re z_n$ is bounded. So $|z'|$ is also bounded. Finally, since $\Re z_n\geq e^{-c-b^2}$, we conclude that $F_c$ is compact. Thus $E_c$ is also compact.
\end{proof}
And since $\ccal^{II}_{\vartheta,b}$ is clearly homotopic  to $S^1$, every holomorphic line bundle over $\ccal^{II}_{\vartheta,b}$ is trivial. Since $\Psi^{II}_{\vartheta,b}$ can be extended to $\C^n$, it is easy to see that $\ccal^{II}_{\vartheta,b}$ is the subset of $\C^{n-1}\times \C^*$ defined by $\kappa_{II}>0$.

Denote by $\Psi^{II,-1}_{\vartheta,b}$ a local inverse of $\Psi^{II}_{\vartheta,b}$.
By \eqref{eqn-jacobian-II}, we have \begin{eqnarray*}
	\frac{\omega_{II}^n}{n!}&=& (\Psi^{II,-1}_{\vartheta,b})^*\frac{\omega_{H}^n}{n!} = \frac{2^{n-2}b^2 }{\pi^2\kappa_{II}^{n+1}}e^{2\Re \frac{z_{n-1}}{b}\sum_{j=1}^{n-2}\vartheta_j} \frac{|dw|^2}{|w_{n-1}|^2}\\
	&=& \frac{2^{n-2}b^2 }{\pi^2\kappa_{II}^{n+1}}|w_{n-1}|^{ \frac{1}{\pi}\sum_{j=1}^{n-2}\vartheta_j-2}|dw|^2 .
\end{eqnarray*}

Let $\hcal_{II,\alpha,k}$ be the Bergman space consisting of holomorphic functions $f$ on $\ccal^{II}_{\vartheta,b}$ such that
\begin{equation}
	\|f\|^2_{II,\alpha,k}=\int_{\ccal^{II}_{\vartheta,b}} |f|^2 \kappa_{II}^{2k}|w_{n-1}|^{-2\mathfrak{m}_k} \frac{\omega_{II}^n}{n!}<+\infty.
	\end{equation}
Let $\rho_{II,\alpha,k}$ be the Bergman kernel function of $\hcal_{II,\alpha,k}$.

\

The main result of this section is the following theorem.
\begin{theorem}\label{thm-cusp-model-II}
	Theorem \ref{thm-main} holds for $(\ccal^{II}_{\vartheta,b},\omega_{II})$, with $k_0=n+1$.
\end{theorem}

\subsection{Approximation by complex hyperbolic cylinders}
We fix a $\ccal^{II}_{\vartheta,b}$. By abuse of notation, we also denote by $\vartheta=(\vartheta,0)$, i.e. we set $\vartheta_{n-1}=0$.
Let $N_n:\C^{n-1}\times \C^*\to \C^{n-1}\times \C^*$ be the biholomorphism defined by \begin{equation}
	N_n(z_1,\cdots,z_{n-1},z_n)=(\frac{1}{\sqrt{2}}z_1,\cdots,\frac{1}{\sqrt{2}}z_{n-1},z_n).
\end{equation}
We then identify the complex hyperbolic cylinder $X_{\lambda,-\vartheta}$ with $N_n(X_{\lambda,-\vartheta})$, namely, we consider $X_{\lambda,-\vartheta}$ as the subset of $\C^{n-1}\times \C^*$ defined by \begin{equation}
\cos\left(\frac{\log \lambda}{\pi} t_n\right) - \sum_{j=1}^{n-2} |z_j|^2 e^{\frac{\vartheta_j}{\pi} t_n}-|z_{n-1}|^2>0,
	\end{equation}
where $t_n=\log |z_n|$.

Define $E^{II}_{\vartheta,\lambda,b}:X_{\lambda,-\vartheta}\to \C^{n-1}\times \C^*$ by 
\begin{equation}
	E^{II}_{\vartheta,\lambda,b}(z_1,\cdots,z_{n-1},z_n)=(\frac{b}{\log\lambda}z_1,\cdots,\frac{b}{\log\lambda}z_{n-2},z_n,2(\frac{b}{\log\lambda})^2\frac{z_{n-1}+1}{1-z_{n-1}}).
\end{equation}
\begin{proposition}
	$E^{II}_{\vartheta,\lambda,b}$ is a biholomorphism from $X_{\lambda,-\vartheta}$ to an open subset of $\ccal^{II}_{\vartheta,b}$.
	\end{proposition}
\begin{proof}
Since $w_n=2(\frac{b}{\log\lambda})^2\frac{z_{n-1}+1}{1-z_{n-1}}$, we have \[z_{n-1}=\frac{w_n-2(\frac{b}{\log\lambda})^2}{w_n+2(\frac{b}{\log\lambda})^2}=\frac{2w_n}{w_n+2(\frac{b}{\log\lambda})^2}-1.\] 

Notice that $\tau_{n-1}=t_n$, we have \begin{eqnarray*}
	&&\cos\left(\frac{\log \lambda}{\pi} t_n\right) - \sum_{j=1}^{n-2} |z_j|^2 e^{\frac{\vartheta_j}{\pi} t_n}-|z_{n-1}|^2\\
    &=&1-2\sin^2\left(\frac{\log \lambda}{2\pi} \tau_{n-1}\right) -(\frac{\log \lambda}{b})^2 \sum_{j=1}^{n-2} |w_j|^2 e^{\frac{\vartheta_j}{\pi} \tau_{n-1}}-\big|\frac{2w_n}{w_n+2(\frac{b}{\log\lambda})^2}-1 \big|^2\\
	&=& \frac{8(\frac{b}{\log\lambda})^2\Re w_n}{|w_n+2(\frac{b}{\log\lambda})^2|^2}-2\sin^2\left(\frac{\log \lambda}{2\pi} \tau_{n-1}\right) -(\frac{\log \lambda}{b})^2 \sum_{j=1}^{n-2} |w_j|^2 e^{\frac{\vartheta_j}{\pi} \tau_{n-1}},
	\end{eqnarray*}
	where the last line can be rewritten as
	\begin{multline}\label{eq-cyl-cusp-II}
		(\frac{\log \lambda}{b})^2\bigg( \frac{8(\frac{b}{\log\lambda})^4\Re w_n}{|w_n+2(\frac{b}{\log\lambda})^2|^2}-2(\frac{b}{\log \lambda})^2\sin^2\left(\frac{\log \lambda}{2\pi} \tau_{n-1}\right) 
		- \sum_{j=1}^{n-2} |w_j|^2 e^{\frac{\vartheta_j}{\pi} \tau_{n-1}}\bigg).
	\end{multline}
	Let $x=\frac{\Re w_n}{2(\frac{b}{\log\lambda})^2}$ and $y=\frac{\log \lambda}{2\pi} \tau_{n-1}$, then the proposition follows from Lemma \ref{lem-inequality-sin} below.
	\end{proof}
\begin{lemma}\label{lem-inequality-sin}
	For $x\geq 0, y\geq 0$, if $\frac{x}{(1+x)^2}-\sin^2(y)\geq 0$, then \[\frac{x}{(1+x)^2}-\sin^2(y)\leq x-y^2.\]
\end{lemma}

\begin{proof}
	Suppose $\frac{x}{(1+x)^2}-\sin^2(y)=c>0$, then, $\exists c'>c$ such that \[\frac{x-c'}{(1+(x-c'))^2}= \frac{x}{(1+x)^2}-c.\] Replacing $x$ by $x-c'$, we only need to show that if $\frac{x}{(1+x)^2}-\sin^2(y)=0$, then $x-y^2\geq 0$. So we need to show that $\arcsin(\sqrt{\frac{x}{(1+x)^2}})\leq \sqrt{x}$.
	
	Let $f_1=\arcsin(\sqrt{\frac{x}{(1+x)^2}})$ and $f_2=\sqrt{x}$. Since $f_1(0)=f_2(0)$, we can compare their derivatives:
	\begin{eqnarray*}
		f_1'&=&\frac{1}{\sqrt{1-\frac{x}{(1+x)^2}}}\cdot\frac{1}{2\sqrt{\frac{x}{(1+x)^2}}}\cdot\frac{(1+x)^2-2x(1+x)}{(1+x)^4}\\ &=&\frac{1}{\sqrt{1+x+x^2}}\cdot\frac{1-x}{1+x}\cdot\frac{1}{2\sqrt{x}}\leq \frac{1}{2\sqrt{x}}=f_2'.
	\end{eqnarray*}
	So the conclusion follows.
	
\end{proof}
We denote by $\tilde{X}_{\lambda,II}$ the image of $X_{\lambda,-\vartheta}$ under $E^{II}_{\vartheta,\lambda,b}$. Let $\tilde{\omega}_{\lambda,II}$ be the pull back of $\omega_{\lambda,-\vartheta}$ by $(E^{II}_{\vartheta,\lambda,b})^{-1}$. We define \[\tilde{h}_{\lambda,I}=\bigg( \frac{8(\frac{b}{\log\lambda})^4\Re w_n}{|w_n+2(\frac{b}{\log\lambda})^2|^2}-2(\frac{b}{\log \lambda})^2\sin^2\left(\frac{\log \lambda}{2\pi} \tau_{n-1}\right) 
		- \sum_{j=1}^{n-2} |w_j|^2 e^{\frac{\vartheta_j}{\pi} \tau_{n-1}}\bigg)^2.\] 
	By \eqref{eq-cyl-cusp-II} and Lemma \ref{lem-inequality-sin}, the following proposition is straightforward.
\begin{proposition}\label{prop-h-lambda-tilde}
	On each compact subset $K\subset \ccal^{II}_{\vartheta,b}$, we have that $\tilde{h}_{\lambda,II}$ converges to $h_{II}=\kappa_{II}^2$ in the $C^5$-topology as $\lambda\to 1^+$. We also have \[\tilde{h}_{\lambda,II}\leq h_{II}\] at points in $\tilde{X}_{\lambda,II}$.
\end{proposition}

This proposition implies that $\tilde{\omega}_{\lambda,II}$ converges to $\omega_{II}$ in $C^3(K)$ topology for each compact subset $K\subset \ccal^{II}_{\vartheta,b}$.

Under the indentification $N_n$, we have \[\frac{\tilde{\omega}_{\lambda,II}^n}{n!}=\frac{2^{n-2}(\log\lambda)^2e^{\frac{t_n}{\pi}\sum_{j=1}^{n-2}\vartheta_j }}{\pi^2\big(\cos\left(\frac{\log \lambda}{\pi} t_n\right) - \sum_{j=1}^{n-2} |z_j|^2 e^{\frac{\vartheta_j}{\pi} t_n}-|z_{n-1}|^2 \big)^{n+1}|z_n|^2}|dz|^2. \]
Since \[dz_{n-1}=\frac{4(\frac{b}{\log\lambda})^2}{(w_n+2(\frac{b}{\log\lambda})^2)^2}dw_n,\]we have \[|dz_{n-1}|^2=(\frac{\log\lambda}{b})^4\big|1+\frac{(\log\lambda)^2}{2b^2}w_n \big|^{-2}|dw_n|^2 \]
Thus,
by \eqref{eq-cyl-cusp-II}, we have 
 \begin{equation}
	\frac{\tilde{\omega}_{\lambda,II}^n}{n!}=\frac{2^{n-2}b^2e^{\frac{\tau_{n-1}}{\pi}\sum_{j=1}^{n-2}\vartheta_j }}{\pi^2\tilde{h}_{\lambda,I}^{(n+1)/2}}\big|1+\frac{(\log\lambda)^2}{2b^2}w_n \big|^{-2}\frac{|dw|^2}{|w_{n-1}|^2}
\end{equation}

 Let $\tilde{\rho}_{\lambda,II,\alpha,k}$ denote the Bergman kernel function of $(\tilde{X}_{\lambda,II},\tilde{h}_{\lambda,II}^k|w_{n-1}|^{-2\mathfrak{m}_k} ,\frac{\tilde{\omega}_{\lambda,II}^n}{n!})$, and $\rho_{\lambda,II,\alpha,k}$ denote the Bergman kernel function of $(X_{{\lambda,-\vartheta}},h_{\lambda,-\vartheta}^k|z_n|^{-2\mathfrak{m}_k} ,\frac{\omega_{\lambda,-\vartheta}^n}{n!})$. Then,  we have \begin{equation}\label{eqn-rho-tilde-rho-II}
	\tilde{\rho}_{\lambda,II,\alpha,k}(E^{II}_{\vartheta,\lambda,b}(p))= \rho_{\lambda,II,\alpha,k}(p),
\end{equation}for each $p\in X_{\lambda,-\vartheta}$.

\begin{theorem}\label{thm-rho-tilde-convergence-II}
	Suppose that $2k\geq n+1$.
	For each point $p\in \ccal^{II}_{\vartheta,b}$, we have 
	\begin{equation}
		\lim_{\lambda\to 0^+}\tilde{\rho}_{\lambda,II,\alpha,k}(p)=\rho_{II,\alpha,k}(p).
	\end{equation}
\end{theorem}
\begin{proof}
	Since $\Re w_n>0$, we have $\big|1+\frac{(\log\lambda)^2}{2b^2}w_n \big|^{-2}<1$.
	Then the proof of this theorem completely parallel the proof of Theorem \ref{thm-rho-tilde-convergence-I}.
	
\end{proof}

For each point $p\in \ccal^{II}_{\vartheta,b}$ and each $m\in \Z^*$, it is easy to see that there is exactly one oriented geodesic loop $\gamma_{p,m}^{II}$, with respect to $(\ccal^{II}_{\vartheta,b},\omega_{II})$, based at $p$ such that the winding number of $\gamma$ at $p$ is $m$. More precisely, let $\tilde{p}$ be a lift of $p$ in $\mathfrak{H}_n$, then $\gamma_{p,m}^{II}$ is the projection of the geodesic segment $[\tilde{p},(A^{II}_{\vartheta,b})^m(\tilde{p})]$ to $\ccal^{II}_{\vartheta,b}$. Similarly, we also denote by $\gamma^{II}_{p,m,\lambda}$ the oriented geodesic loop, with respect to $(\tilde{X}_{\lambda,II},\tilde{\omega}_{\lambda,II})$, based at $p$ such that the winding number of $\gamma_{p,m,\lambda}$ at $p$ is $m$.
\begin{theorem}\label{thm:geodesic_loop-II}
	For each point $p\in \ccal^{II}_{\vartheta,b}$ and each $m\in \Z^*$, $\gamma^{II}_{p,m,\lambda}$ converges to $\gamma_{p,m}^{II}$ as $\lambda\to 1^+$.
\end{theorem}
\begin{proof}
	We only show that the lengh of $\gamma^{II}_{p,m,\lambda}$ converges to the length of $\gamma_{p,m}^{II}$ as $\lambda\to 1^+$. The rest is the same as the proof of Theorem \ref{thm:geodesic_loop-I}.
	
	Let $p=(w',w_n)$ and let $\tilde{p}=(z',z_n)$ be a lift of $p$ in $\mathfrak{H}_n$. Let $\ell_{II,m}$ denote the length of the geodesic segment $[\tilde{p},(A^{II}_{\vartheta,b})^m(\tilde{p})]$. We have 
	\begin{eqnarray*}
		&&\cosh\frac{\ell_{II,m}}{2}\\
		&=&\frac{\big|z_n+\bar{z}_n+mbi\bar{z}_{n-1}+\frac12m^2b^2-\sum_{j=1}^{n-2}e^{-im\vartheta_j}|z_j|^2 -z_{n-1}(\bar{z}_{n-1}-mbi)\big|}{2\Re z_n-\sum_{j=1}^{n-1}|z_j|^2}\\
		&=&\frac{\big|2\Re z_n -\sum_{j=1}^{n-1}e^{-im\vartheta_j}|z_j|^2 +\frac12m^2b^2+2mbi\Re z_{n-1} \big|}{2\Re z_n-\sum_{j=1}^{n-1}|z_j|^2 }\\ 
		&=&\frac{\big|2\Re w_n-\frac{b^2}{2\pi^2}\tau_{n-1}^2-\sum_{j=1}^{n-2}e^{-im\vartheta_j}|w_j|^2e^{\frac{\vartheta_j}{\pi}\tau_{n-1}} +\frac12m^2b^2+\frac{mb^2}{\pi}\tau_{n-1}i \big|}{\kappa_{II} }.
	\end{eqnarray*}
	
	Let $q\in X_{\lambda,-\vartheta}$ be the image of $p$ under the inverse of $E^{II}_{\vartheta,\lambda,b}$. Let $q=(x',x_n)$ and let $\tilde{q}=(z',z_n)$ be a lift of $q$ in $\mathfrak{H}_n$.
	Let $\ell_{m,\lambda,II}$ denote the length of the geodesic segment $[\tilde{q},A_{\lambda,-\vartheta}^m(\tilde{q})]$. We use the notation $z_n=|z_n|e^{i\theta_n}$ and $t_n=\log |x_n|$.

	By \eqref{eqn-length-gamma-m}, we have 
	\begin{eqnarray*}
		\cosh\frac{\ell_{m,\lambda}}{2}&=&\frac{\big|\lambda^{2m}\bar{z}_n+ z_n-\lambda^m \sum_{j=1}^{n-1}e^{im\vartheta_j}|z_j|^2 \big|}{\lambda^m(2\Re z_n-\sum_{j=1}^{n-1}|z_j|^2)}\\
		&=&\frac{\big|\lambda^{m}e^{-i\theta_n} +\lambda^{-m} e^{i\theta_n}-2\sum_{j=1}^{n-1}e^{im\vartheta_j}|x_j|^2e^{\frac{\vartheta_j}{\log \lambda}\theta_n}  \big|}{2\cos(\frac{\log\lambda}{\pi}t_n)-2\sum_{j=1}^{n-1}|x_j|^2e^{\frac{\vartheta_j}{\pi}t_n} }\\
		 &=&\frac{|f(\lambda,w)|}{\cos(\frac{\log\lambda}{\pi}t_n)-\sum_{j=1}^{n-1}|x_j|^2e^{\frac{\vartheta_j}{\pi}t_n} },
	\end{eqnarray*}
where $f(\lambda,w)$ equals \[\cosh(m\log\lambda)\cos(\frac{\log\lambda}{\pi}t_n)-i\sinh(m\log\lambda)\sin(\frac{\log\lambda}{\pi}t_n)-\sum_{j=1}^{n-1}e^{im\vartheta_j}|x_j|^2e^{\frac{\vartheta_j}{\pi}t_n} .\]
Since \[|x_{n-1}|^2=1-\frac{8(\frac{\log\lambda}{b})^2\Re w_n}{\big| w_n+2(\frac{\log\lambda}{b})^2 \big|^2}, \]
and \begin{eqnarray*}
	\cosh(m\log\lambda)\cos(\frac{\log\lambda}{\pi}t_n)-1=2\sinh^2(\frac{m\log\lambda}{2})-2\cosh(m\log\lambda)\sin^2(\frac{\log\lambda}{2\pi}t_n),
\end{eqnarray*}
we have \begin{multline}\label{eqn-f-lambda-w-II}
	f(\lambda,w)=\frac{8(\frac{b}{\log\lambda})^2\Re w_n}{\big| w_n+2(\frac{b}{\log\lambda})^2 \big|^2}+2\sinh^2(\frac{m\log\lambda}{2})-2\cosh(m\log\lambda) \sin^2(\frac{\log\lambda}{2\pi}\tau_{n-1})\\ -i\sinh(m\log\lambda)\sin(\frac{\log\lambda}{\pi}\tau_{n-1})-(\frac{\log\lambda}{b})^2\sum_{j=1}^{n-1}e^{im\vartheta_j}|w_j|^2e^{\frac{\vartheta_j}{\pi}\tau_{n-1}}. 
\end{multline}
Thus \begin{multline*}
	\lim_{\lambda\to 1^+}\frac{f(\lambda,w)}{(\frac{\log\lambda}{b})^2}=2\Re w_n+\frac12 m^2 b^2-\frac{b^2}{2\pi^2}\tau_{n-1}^2-i\frac{mb^2}{\pi}\tau_{n-1}- \sum_{j=1}^{n-1}e^{im\vartheta_j}|w_j|^2e^{\frac{\vartheta_j}{\pi}\tau_{n-1}}.
\end{multline*}
So, by \eqref{eq-cyl-cusp-II}, we get that the lengh of $\gamma^{II}_{p,m,\lambda}$ converges to the length of $\gamma_{p,m}^{II}$ as $\lambda\to 1^+$.

\end{proof}

\begin{proof}[Proof of Theorem \ref{thm-cusp-model-II}] 
	The theorem follows from Theorems \ref{thm-rho-tilde-convergence-II}, \ref{thm:geodesic_loop-II} and.
Fix a point $p\in \ccal^{II}_{\vartheta,b}$. Since \[\frac{2\sinh^2(\frac{m\log\lambda}{2})}{(\frac{\log\lambda}{b})^2}\geq \frac12 m^2b^2,\]
	 \eqref{eqn-f-lambda-w-II} implies that there exist $ \lambda_0>0, c>0$ independent of $m$ and $m_0$ such that \[\cosh\frac{\ell_{m,\lambda}}{2}>cm^2,\] for $m\geq m_0$ and $\lambda\leq \lambda_0$. Then, since $\sum m^{-4k}$ converges, dominated convergence theorem implies that \[\lim_{\lambda\to 1^+}\cosh^{-2k}\frac{\ell_{m,\lambda}}{2}e^{-2\pi k\alpha_{p,\lambda} i}=\cosh^{-2k}\frac{\ell_{m}}{2}e^{-2\pi k\alpha_{p} i}. \]
\end{proof}

\section{Proof of Theorem \ref{thm-main}}\label{sec-proof-main}
Although Lu-Zelditch \cite{lu2016szegHo} and Ma-Marinescu \cite{ma2015exponential} have proved the formula in Definition and Theorem \ref{def-and-thm-k-gamma} in the compact case, we give a complete proof here since we do not assume compactness, and we need the precise lower bound for $k$. 

Let $(X,\omega,L,h)$ satisfy the setting of Theorem \ref{thm-main}, and let $\nabla$ be the Chern connection of $(L,h)$.
\subsection{Back to the unit ball}\label{subsec-back-unit-disk}
Let $\pi:\B^n\to X$ be a universal holomorphic locally isometric covering map. Let $\Lambda$ be the Fuchsian group such that $X\cong \B^n/\Lambda$. Let $L_x$ be the fiber of $L$ at $x\in X$.
Then $\pi^*L$ is defined as the subset \[ \pi^*L=\{(p,v)\in \B^n\times L\big| v\in L_{\pi(p)}\}.\] The Hermitian metric $\pi^*h$ on $\pi^*L$ is the pull-back of $h$ via the projection $\B^n\times L\to L$.

Since $\pi^*L$ is a holomorphic line bundle on $\B^n$, it is trivial. Let $\bm{e}'$ be a holomorphic frame of $\pi^*L$ on $\B^n$. Then $\parallel \bm{e}'\parallel^2_{\pi^*h}=e^{-\varphi}$ for some smooth function $\varphi$ on $\B^n$ satisfying $\iddbar \varphi=\pi^*\omega=\omega_0$. Let $\phi_0=-2\log(1-|z|^2)$. Then $\varphi-\phi_0$ is pluriharmonic. So there exists a holomorphic function $F$ on $\B^n$ such that $\varphi-\phi_0=\Re F$. Let $\bm{e}=e^{F/2}\bm{e}'$, then \[\parallel \bm{e}\parallel^2_{\pi^*h}=e^{-\phi_0}.\]
Similarly, for any $k\in \mathbb{N}$, we can define a frame $\bm{e}_k$ of $\pi^*L^k$ on $\B^n$ such that $\parallel \bm{e}_k\parallel^2_{\pi^*h}=e^{-k\phi_0}$.

Each $g\in \Lambda$ is lifted to a map \[\tilde{g}:\pi^*L^k\to \pi^*L^k, \quad (x,v)\mapsto (g(x),v).\] 
So for any $s\in H^0(\B^n,\pi^*L^k)$, we can define the pullback map $g^*:H^0(\B^n,\pi^*L^k)\to H^0(\B^n,\pi^*L^k)$ by 
\begin{equation}
	g^*s(x)=\tilde{g}^{-1} s(g(x)).
\end{equation}
If $f\in H^0(\B^n,\pi^*L^k)$, then $f=U\bm{e}_k$ for some holomorphic function $U$. Then we have
\[g^*s(x)=(g^*U\frac{g^*\bm{e}_k}{\bm{e}_k})\bm{e}_k,\]where $g^*U=U\circ g$.

To distinguish different Bergman spaces, we use the following notation: let \[\hcal_{X,k}=\{s\in H^0(X,L^k): \int_X |s|^2_h \frac{\omega^n}{n!}<\infty\},\] and \[\hcal_{\B^n,k}=\{s\in H^0(\B^n,\pi^*L^k): \int_{\B^n} |s|^2_{\pi^*h} \frac{\omega_0^n}{n!}<\infty\}.\] Let $K_{X,k}$ and $K_k$ be the Bergman kernels of $\hcal_{X,k}$ and $\hcal_{\B^n,k}$ respectively. Let $\rho_{X,k}$ and $\rho_{k}$ be the corresponding Bergman kernel functions.
Let $V_k=\pi^*\hcal_{X,k}\subset H^0(\B^n,\pi^*L^k)$. Clearly, for $s\in V_k$, $g^*s=s, \forall g\in \Lambda$. Let $F_\Lambda$ be a fundamental domain of the action of $\Lambda$ on $\B^n$. Then for any $s\in H^0(\B^n,\pi^*L^k)$ that is invariant under the action of $\Lambda$, $s\in V_k$ if and only if \[\int_{F_\Lambda} |s|^2\frac{\omega_0^n}{n!}<\infty.\] 
With the $L^2$-norm defined by integrating over $F_\Lambda$, $V_k$ is then a Hilbert space.
$\pi^*K_{X,k}$ is then the Bergman kernel of $V_k$ in the sense that, for any $s\in V_k$, \[\int_{F_\Lambda} (s(z),\pi^*K_{X,k}(z,w))_{\pi^*h}\frac{\omega_0^n}{n!}(z)=s(w),\]
for any $w\in \B^n$.

\subsection{Summation over $\Lambda$.}

For simplicity, we write $h$ for $\pi^*h$.

Recall that the critical exponent $\delta(\Lambda)$ of a Fuchsian group $\Lambda$ is defined as following:

Let $\ocal_0$ be the orbit of $0$ under the action of $\Lambda$ on $\B^n$.
Let $\lambda_N$ be the number of points $p\in \ocal_0$ in the half open shell of radii in $(N-\frac{1}{2},N+\frac{1}{2}]$. Then,
\[
\delta(\Lambda)\triangleq \limsup_{N\to\infty}\frac{\log\lambda_N}{N}.
\]
So for any $\epsilon>0$, there exists $C_1>0$ and $N_1$ such that for any $N>N_1$,  \begin{equation}\label{e-lambda-n}
	\lambda_N\leq C_1 e^{(\delta(\Lambda)+\epsilon)N}.
\end{equation}

We can give $\Lambda$ an order according to the distance from $0$ to $g(0)$ for $g\in \Lambda$.
\begin{lemma}\label{lem-sum-g-f-0}
	Let $f_0=\bm{e}_k$. Then when $k>\delta(\Lambda)$, $\sum_{g\in \Lambda}g^*f_0$ converges locally uniformly and absolutely.
\end{lemma}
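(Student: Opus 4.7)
The plan is to reduce the series to the classical Poincaré series of the Fuchsian group $\Gamma$, whose convergence threshold is by construction the critical exponent $\delta(\Gamma)$. First I would compute the pointwise norm of each summand. Since $\pi\circ g=\pi$ for $g\in\Gamma$, the fibers $\pi^*L^k_{z}=L^k_{\pi(z)}$ and $\pi^*L^k_{g(z)}=L^k_{\pi(g(z))}$ are literally the same fiber of $L^k$, equipped with the same metric, so the lift $\tilde g$ is an isometry of $\pi^*h$. Hence
\[
\|g^*f_0(z)\|_{\pi^*h}=\|\tilde g^{-1}f_0(g(z))\|_{\pi^*h}=\|f_0(g(z))\|_{\pi^*h}=(1-|g(z)|^2)^k.
\]

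Next I would pass to hyperbolic distance using $e^{-d_\D(0,w)}=\frac{1-|w|}{1+|w|}$, which gives $1-|w|^2\le 4e^{-d_\D(0,w)}$. For $z$ in a compact set $K\subset\D$ the triangle inequality yields
\[
d_\D(0,g(z))\ge d_\D(0,g(0))-\operatorname{diam}_\D(K\cup\{0\}),
\]
so, absorbing the $K$-dependent factor into a constant $C_K$,
\[
\sup_{z\in K}\|g^*f_0(z)\|_{\pi^*h}\le C_K^{\,k}\,e^{-k\,d_\D(0,g(0))}.
\]

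Finally I would bin $\Gamma$ into the shells used to define $\lambda_N$. Pick $\epsilon>0$ with $k>\delta(\Gamma)+\epsilon$. Then by \eqref{e-lambda-n} and the previous bound, for a constant $C_K'>0$,
\[
\sum_{g\in\Gamma}\sup_{z\in K}\|g^*f_0(z)\|_{\pi^*h}\le C_K'+C_K'\sum_{N>N_1}e^{(\delta(\Gamma)+\epsilon-k)N},
\]
which is a convergent geometric tail. This gives uniform absolute convergence of the series of norms on every compact $K\subset\D$, hence the series of sections $\sum_{g\in\Gamma}g^*f_0$ converges locally uniformly and absolutely, as claimed.

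There is no real obstacle here; the step requiring most care is just the verification that $\tilde g$ is a genuine $\pi^*h$-isometry (which then turns the sum of norms into the Poincaré series $\sum_g e^{-k d_\D(0,g(0))}$). Once that is in place, the definition of $\delta(\Gamma)$ via the orbit-counting function $\lambda_N$ is tailor-made to deliver the bound, and in particular the argument is insensitive to whether $X$ has finite topology or only positive injectivity radius.
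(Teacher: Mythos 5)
Your proposal is correct and follows essentially the same route as the paper's proof: identify $\|g^*f_0(z)\|_{\pi^*h}$ with $\|f_0(g(z))\|_{\pi^*h}$ (a Poincaré-type series in $e^{-k\,d(0,g(0))}$, the paper writes it as $\cosh^{-2k}(d/2)$), compare $d(0,g(z))$ to $d(0,g(0))$ by the triangle inequality on a compact set, and sum over the distance shells using the orbit-counting bound \eqref{e-lambda-n} with $\epsilon$ chosen so that $k>\delta(\Gamma)+\epsilon$, giving a convergent geometric tail.
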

\begin{proof}
	We have $|f_0(z)|_h=(1-|z|^2)^k=\cosh^{-2k}\frac{d(z,0)}{2}$. For a fixed radius $d_0$, we have 
	\[|g^*f_0(w)|_h\leq \cosh^{-2k}\frac{d(g(0),0)-d_0}{2},\]
	for $w\in B_0(d_0)$, when $d(g(0),0)>d_0$.

	Since $\cosh x>\frac12 e^x$, we obtain
	\[|g^*f_0(w)|_h\leq 2^k e^{-2k\frac{d(g(0),0)-d_0}{2}}.\]
	Thus, by \eqref{e-lambda-n},
	\[\sum_{N\leq d(g(0),0)<N+1}|g^*f_0(w)|_h\leq C_2e^{(\delta(\Lambda)+\epsilon)N-kN},\] for $N$ large enough.
	Then it is clear that when $k>\delta(\Lambda)$, $\sum_{g\in \Lambda}g^*f_0$ converges locally uniformly and absolutely.

\end{proof}
So $\sum_{g\in \Lambda}g^*f_0$ is a holomorphic section of $\pi^*L^k$ on $\B^n$. Clearly $\sum_{g\in \Lambda}g^*f_0$ is invariant under the action of $\Lambda$.
\begin{lemma}\label{lem-sum-g-f-0-in-v-k}
	For $k>2\delta(\Lambda)+\frac{n}{2}$ ($k>\delta(\Lambda)+\frac{n}{2}$, when $X$ is compact), we have $\sum_{g\in \Lambda}g^*f_0\in V_k$. 
\end{lemma}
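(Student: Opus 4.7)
Denote $S=\sum_{g\in\Gamma}g^*f_0$, which by Lemma~\ref{lem-sum-g-f-0} is a $\Gamma$-invariant holomorphic section of $\pi^*L^k$ on $\D$. It lies in $V_k$ precisely when its $L^2$-norm over $F_\Gamma$ is finite, and the plan is to establish this by an \emph{unfolding} argument that reduces the norm over $F_\Gamma$ to integrals over the whole disk. Formally expanding $\|S\|^2_{L^2(F_\Gamma)}$, setting $g=g_2g_1^{-1}$, and using (i) the fibrewise unitarity of $g_1^*$ (so $\langle g_1^*\alpha,g_1^*\beta\rangle_h=g_1^*\langle\alpha,\beta\rangle_h$), (ii) the $\Gamma$-invariance of $\omega_0$, and (iii) $\bigsqcup_{g_1}g_1(F_\Gamma)=\D$, one collapses the double sum to
\[
\int_{F_\Gamma}|S|_h^2\,\omega_0 \;=\; \sum_{g\in\Gamma}\int_{\D}\langle f_0,\,g^*f_0\rangle_h\,\omega_0,
\]
provided the resulting series converges absolutely. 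The task thus reduces to a summable majorant for the integrals on the right.

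Because $\pi^*L^k$ has one-dimensional fibres, the pointwise Cauchy--Schwarz inequality is an equality: $|\langle f_0,g^*f_0\rangle_h|(z)=(1-|z|^2)^k(1-|g(z)|^2)^k$. Writing $a\eqd g^{-1}(0)$ and combining the M\"obius identity $1-|g(z)|^2=(1-|z|^2)(1-|a|^2)/|1-\bar a z|^2$ with $\omega_0 = 4\,dA/(1-|z|^2)^2$, the integral reduces to
\[
4(1-|a|^2)^k\int_\D\frac{(1-|z|^2)^{2k-2}}{|1-\bar a z|^{2k}}\,dA.
\]
Since $2k=(2k-2)+2$, the exponents land exactly on the critical line of the Forelli--Rudin estimate, giving an inner integral bounded by $C\bigl(1+\log\tfrac{1}{1-|a|^2}\bigr)\le C(1+d(0,a))$. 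Combined with $1-|a|^2\le 4e^{-d(0,a)}$ and $d(0,a)=d(0,g(0))$, this yields the key bound
\[
\int_\D\bigl|\langle f_0,g^*f_0\rangle_h\bigr|\,\omega_0\;\le\;C\bigl(1+d(0,g(0))\bigr)\,e^{-k\,d(0,g(0))}.
\]

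Finally, summing over $\Gamma$ by grouping elements according to $\lfloor d(0,g(0))\rfloor$ and invoking the orbit-counting estimate \eqref{e-lambda-n}, one gets
\[
\sum_{g\in\Gamma}\bigl(1+d(0,g(0))\bigr)\,e^{-k\,d(0,g(0))}\;\le\;C'\sum_N (1+N)\,e^{(\delta(\Gamma)+\epsilon-k)N},
\]
which is finite whenever $k>\delta(\Gamma)$. Both stated hypotheses --- $k>\delta(\Gamma)+\tfrac12$ in the compact case and $k>2\delta(\Gamma)+\tfrac12$ in general --- comfortably imply this. This justifies the unfolding a posteriori by Fubini, hence $\|S\|^2_{L^2(F_\Gamma)}<\infty$ and $S\in V_k$.

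The main technical point I expect to have to argue carefully is the critical-case Forelli--Rudin estimate: the exponents $t=2k-2$, $c=2k$ sit exactly on the logarithmic threshold, so the resulting bound carries an extra factor $(1+d(0,a))$ rather than being a pure exponential. This polynomial defect is harmless because it is absorbed by the exponential decay $e^{-k\,d(0,g(0))}$ as soon as $k>\delta(\Gamma)$. A secondary subtlety is the legitimacy of the unfolding when $F_\Gamma$ has cusp ends or extends to infinity; the crucial point is that the majorant above is uniform on all of $\D$ and independent of any particular choice of $F_\Gamma$, so Fubini applies once the $L^1$-sum over $\Gamma$ is in hand.
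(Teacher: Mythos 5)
Your proof is correct, but it takes a genuinely different route from the paper. The paper proves membership in $V_k$ by the triangle (Minkowski) inequality, $\|\sum_g g^*f_0\|_{L^2(F_\Gamma)}\le\sum_g\|g^*f_0\|_{L^2(F_\Gamma)}$, and then estimates each term geometrically: for the Dirichlet fundamental domain centered at $0$, the translate $g(0)$ lies at distance at least $\tfrac12 d(0,g(0))$ from $F_\Gamma$ (or $d(0,g(0))-C$ in the compact case), and the explicit tail integral $\int_{d(x,0)>r}|f_0|^2_h\omega_0=\frac{2\pi}{k-1/2}\cosh^{-4k+2}\frac r2$ then gives decay $e^{-(k-\frac12)d(0,g(0))/2}$ per term; this halving of the distance is precisely why the paper's thresholds are $k>2\delta(\Gamma)+\frac12$ in general and $k>\delta(\Gamma)+\frac12$ in the compact case. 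You instead unfold $\int_{F_\Gamma}|S|^2_h\omega_0$ into cross terms $\sum_g\int_\D\langle f_0,g^*f_0\rangle_h\omega_0$ (justified by Tonelli applied first to the nonnegative majorant, exactly as you indicate), compute each cross term via the M\"obius identity and the critical-case Forelli--Rudin estimate, and obtain the bound $C_k\bigl(1+d(0,g(0))\bigr)e^{-k\,d(0,g(0))}$, so the orbit-counting bound \eqref{e-lambda-n} gives convergence already for $k>\delta(\Gamma)$ (with $k\ge1$ so that the Forelli--Rudin exponent $2k-2>-1$). Your computation checks out: the pointwise equality $|\langle f_0,g^*f_0\rangle|_h=(1-|z|^2)^k(1-|g(z)|^2)^k$, the identity $1-|g(z)|^2=(1-|z|^2)(1-|a|^2)/|1-\bar az|^2$ with $a=g^{-1}(0)$, the critical exponent $c=0$ with logarithmic growth, and $1-|a|^2=\cosh^{-2}\frac{d(0,a)}{2}$ are all correct, and the unitarity of the lifted action on $(\pi^*L^k,\pi^*h^k)$ together with $\Gamma$-invariance of $\omega_0$ makes the unfolding legitimate without any special property of the fundamental domain. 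So your argument proves a strictly stronger statement ($k>\delta(\Gamma)$ suffices, uniformly in the compact and non-compact cases), which of course implies the lemma as stated; what the paper's argument buys in exchange is elementarity --- it needs no Forelli--Rudin-type integral asymptotics, only the exact tail integral of $|f_0|^2$ and the Dirichlet-domain geometry. If you keep your version, state the Forelli--Rudin lemma you invoke (the constant depends on $k$, which is harmless here) and note explicitly that $\Gamma$-invariance of $S$ plus finiteness of the $L^2$-norm over a fundamental domain is the membership criterion for $V_k$ used in Section~\ref{subsec-back-unit-disk}.
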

\begin{proof}
	Write $x_0=0$ and $x_g=g(0)$. 
	Recall the Dirichlet fundamental domain $F$ is the intersection of the closed half-spaces
\[
F=\bigcap_{\gamma\in\Lambda}\{x:\;d(x,x_0)\le d(x,\gamma x_0)\}.
\]
To show that $\sum_{g\in \Lambda}g^*f_0\in V_k$, it suffices to show that \[\sum_{g\in \Lambda}(\int_F |g^*f_0|_h^2\frac{\omega_0^n}{n!})^{1/2}<\infty. \]	
Let $r(g)=\inf_{x\in F}d(x_g,x)$.
	Let $H_g$ denote the perpendicular bisector of the geodesic segment $[x_0,x_g]$; by construction
\[
d(x_g,H_g)=\tfrac12\,d(x_0,x_g).
\]
Since $F\subset H_g^+$, the half-space containing $x_0$, we have
\[
r(g)\;\ge\;\inf_{x\in H_g^+}d(x_g,x)=d(x_g,H_g)=\tfrac12\,d(x_0,x_g).
\]
Since \[\int_{d(x,0)>r}|f_0(x)|_h^2\frac{\omega_0^n}{n!}<c\cosh^{-4k+2n}\frac{r}{2}, \] for some constant $c>0$ independent of $r$,
the same argument, using critical exponent, as in the proof of Lemma \ref{lem-sum-g-f-0} shows that, for $k>2\delta(\Lambda)+\frac{n}{2}$,
\[\sum_{g\in \Lambda}(\int_F |g^*f_0|_h^2\frac{\omega_0^n}{n!})^{1/2}<\infty.\]
When $X$ is compact, we can take a compact fundamental domain $F_\Lambda$. Then $\exists C>0$ such that $r_g>d(x_0,x_g)-C$.
Then, the same argument shows that for $k>\delta(\Lambda)$, we have $\sum_{g\in \Lambda}g^*f_0\in V_k$.

\end{proof}
Since $\sqrt{\frac{1}{(4\pi)^n} \frac{(2k-1)!}{(2k-n-1)!}}f_0$ is the peak section at $0$ of $(\B^n,\pi^*L^k,\pi^*h^k)$, Lemmas \ref{lem-sum-g-f-0} and \ref{lem-sum-g-f-0-in-v-k} apply to the peak section $s_p$ for any $p\in \B^n$.

\begin{lemma}\label{lem-int-s-sum-g-s-p}
	Let $s_p$ be a peak section at $p$ of $(\B^n,\pi^*L^k)$. Then for $k>2\delta(\Lambda)+\frac{n}{2}$ ($k>\delta(\Lambda)+\frac{n}{2}$, when $X$ is compact), and for any $s\in V_k$, we have 
	\[\int_{F_\Lambda}(s,\sum_{g\in \Lambda}g^*s_p)_{\pi^*h}\frac{\omega_0^n}{n!}=\frac{s(p)}{s_p(p)}.\]
\end{lemma}
\begin{proof}
	It suffices to show this for $p=0$. By Lemmas \ref{lem-sum-g-f-0} and \ref{lem-sum-g-f-0-in-v-k}, 
	\[ \int_{F_\Lambda}(s,\sum_{g\in \Lambda}g^*s_0)_{\pi^*h}\frac{\omega_0^n}{n!}=\sum_{g\in \Lambda}\int_{F_\Lambda}(s,g^*s_0)_{\pi^*h}\frac{\omega_0^n}{n!}.\]
	On the other hand, we claim that $$(s,s_0)_{\pi^*h}\in L^1(\B^n,\frac{\omega_0^n}{n!}).$$ Assume that this claim holds, then \[\sum_{g\in \Lambda}\int_{F_\Lambda}(s,g^*s_0)_{\pi^*h}\frac{\omega_0^n}{n!}=\int_{\B^n} (s,s_0)_{\pi^*h}\frac{\omega_0^n}{n!}.\]

 Then by taking the Taylor expansion of $\frac{s}{\bm{e}_k}$, we get the right hand side equals $(\frac{1}{(4\pi)^n} \frac{(2k-1)!}{(2k-n-1)!})^{-1/2}\frac{s(0)}{\bm{e}_k(0)}=\frac{s(0)}{s_0(0)}$.

 Now we prove the claim. For each $g\in \Lambda$, we have \[\int_{gF_\Lambda}|(s,s_0)_{\pi^*h}|\frac{\omega_0^n}{n!}\leq (\int_{gF_\Lambda} |s_0|^2_{\pi^*h}\frac{\omega_0^n}{n!})^{1/2}(\int_{F_\Lambda} |s|^2_{\pi^*h}\frac{\omega_0^n}{n!})^{1/2}. \]
 So, to show that $(s,s_0)_{\pi^*h}\in L^1(\B^n,\frac{\omega_0^n}{n!})$, it suffice to show that \[\sum_{g\in \Lambda}(\int_{gF_\Lambda} |s_0|^2_{\pi^*h}\frac{\omega_0^n}{n!})^{1/2}<+\infty. \]Since \[\int_{gF_\Lambda} |s_0|^2_{\pi^*h}\frac{\omega_0^n}{n!}=\int_{F_\Lambda} |g^*s_0|^2_{\pi^*h}\frac{\omega_0^n}{n!},\] we only need to show that 
 \[\sum_{g\in \Lambda}(\int_{F_\Lambda} |g^*s_0|^2_{\pi^*h}\frac{\omega_0^n}{n!})^{1/2}<+\infty, \] which has been proved in the proof of Lemma \ref{lem-sum-g-f-0-in-v-k}.
\end{proof}

Let $K_k(z,w)$ be the Bergman kernel of $\hcal_k$. Then we can write \[K_k(z,w)=\sum_{i=1}^{\infty}f_i(z)\otimes \bar{f}_i(w),\] where $\{f_i\}_{i=1}^\infty$ is an orthonormal basis of $\hcal_{k}$.
For each $g\in \Lambda$, we define \[K^g_k(z,w)=\sum_{i=1}^{\infty}g^*f_i(z)\otimes \bar{f}_i(w).\]
\begin{proposition}
	$K^g_k(z,w)=\sum_{i=1}^{\infty}g^*f_i(z)\otimes \bar{f}_i(w)$ is independent of the choice of orthonormal basis $\{f_i\}_{i=1}^\infty$ of $\hcal_{k}$.
\end{proposition}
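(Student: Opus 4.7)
The plan is to realise $K^g_k(z,w)$ as the image of the basis-independent object $K_k(z,w)$ under a basis-independent linear operation acting only on the first variable, after which the conclusion is immediate.

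The first step would be to check that the pullback $U_g \colon s \mapsto g^* s$ is a well-defined unitary operator on the Hilbert space $\hcal_{\D,k}$. Since $g\in\Gamma$ satisfies $\pi\circ g=\pi$, the assignment $(p,v)\mapsto(g(p),v)$ preserves $\pi^*L^k$ and is an isometry for $\pi^*h^k$; combined with $g^*\omega_0=\omega_0$, this gives $\|g^*s\|_{L^2}=\|s\|_{L^2}$. Hence $U_g$ is unitary with inverse $U_{g^{-1}}$. The second step is to recall the intrinsic characterisation of the Bergman kernel: for fixed $w\in\D$, $K_k(\cdot,w)\in\hcal_{\D,k}\otimes \bar L^k_w$ is the unique element with the reproducing property $\langle s,K_k(\cdot,w)\rangle=s(w)$ for every $s\in\hcal_{\D,k}$, and in any ONB $\{f_i\}$ one has the $\hcal_{\D,k}\otimes\bar L^k_w$-convergent expansion $K_k(\cdot,w)=\sum_i f_i\otimes\bar f_i(w)$. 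Applying the bounded operator $U_g\otimes\mathrm{Id}$ termwise yields
\[
(U_g\otimes\mathrm{Id})\bigl(K_k(\cdot,w)\bigr)=\sum_i g^*f_i\otimes \bar f_i(w)=K^g_k(\cdot,w),
\]
so the left-hand side, being a specific basis-free operator applied to a basis-free element, is manifestly independent of the choice of $\{f_i\}$.

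For readers who prefer an explicit verification, the equivalent direct route is a change-of-basis computation. If $\{\tilde f_i\}$ is another ONB with $\tilde f_i=\sum_j u_{ij}f_j$, then completeness of both bases forces $U=(u_{ij})$ to be unitary, so $\sum_i u_{ij}\bar u_{il}=\delta_{jl}$. Substituting and interchanging sums gives
\[
\sum_i g^*\tilde f_i(z)\otimes \bar{\tilde f_i}(w)=\sum_{j,l}\Bigl(\sum_i u_{ij}\bar u_{il}\Bigr)g^*f_j(z)\otimes \bar f_l(w)=\sum_j g^*f_j(z)\otimes \bar f_j(w).
\]
The sole technical subtlety is justifying this interchange, which I regard as routine rather than a real obstacle: it follows from Bessel's inequality together with the pointwise bound $\sum_i |f_i(z)|^2_{\pi^*h^k}=K_k(z,z)e^{-k\phi_0(z)}<\infty$ and a Cauchy--Schwarz argument applied to the two resulting $\ell^2$ sequences. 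The only conceptual content of the proposition therefore sits in the first step, namely the unitarity of $U_g$ on $\hcal_{\D,k}$.
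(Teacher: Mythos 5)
Your proposal is correct and is essentially the paper's argument in different clothing: the paper identifies $K^g_k(\cdot,w)$ as the representing element of the basis-free functional $\val_w\circ(g^{-1})^*$, which is exactly the element $(U_g\otimes\mathrm{Id})K_k(\cdot,w)$ you construct, and both hinge on the same key fact that $g^*$ is unitary on $\hcal_{\D,k}$ (equivalently, that $\{g^*f_i\}$ is again an orthonormal basis). Your explicit verification of that unitarity and of the termwise application of $U_g$ only makes precise what the paper leaves implicit, so no genuinely different route is involved.
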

\begin{proof}
	For any $f\in \hcal_{k}$, we have $f=\sum_{i=1}^{\infty}c_i g^*f_i$, since $\{g^*f_i\}$ is also an orthonormal basis of $\hcal_{k}$. Then for any fixed $w$, \[\int_\B^n (f,\sum_{i=1}^{\infty}g^*f_i(z)\otimes \bar{f}_i(w))_{\pi^*h}\frac{\omega_0^n}{n!}=\sum_{i=1}^{\infty}c_i f_i(w)=(g^{-1})^*f(w), \]
	namely, $K^g_k(z,w)$ is the representation element for the functional $\val_w\circ (g^{-1})^*$, hence independent of the choice of the orthonormal basis.
\end{proof}
\begin{defandthm}\label{def-and-thm-k-gamma}
	We define
\[K_{k}^\Lambda(z,w)\eqd \sum_{g\in \Lambda}K^g_k(z,w). \]
Then for $k>\delta(\Lambda)$, and for each fixed $w$, it converges locally uniformly and absolutely. Moreover, we have that for $k>2\delta(\Lambda)+\frac{n}{2}$ ($k>\delta(\Lambda)+\frac{n}{2}$, when $X$ is compact), $\sum_{g\in \Lambda}K^g_k(z,w)\in V_k$ for each fixed $w$.
\end{defandthm}
\begin{proof}
	Since $K^g_k(z,w)$ is independent of the choice of orthonormal basis, for each fixed $w$, we can choose the orthonormal basis $\{f_i\}$ such that $f_1=s_w$, the peak section at $w$. So $f_i(w)=0$ for $i>1$. So we have \[K^g_k(z,w)=g^*s_w(z)\otimes \bar{s}_w(w).\]
	So by Lemma \ref{lem-sum-g-f-0}, we have $\sum_{g\in \Lambda}K^g_k(z,w)$ converges locally uniformly and absolutely. And by Lemma \ref{lem-sum-g-f-0-in-v-k}, $\sum_{g\in \Lambda}K^g_k(z,w)\in V_k$ for each fixed $w$.
\end{proof}

\begin{theorem}\label{thm-k-gamma-bergman}
	For $k>2\delta(\Lambda)+\frac{n}{2}$ ($k>\delta(\Lambda)+\frac{n}{2}$, when $X$ is compact), $K_{k}^\Lambda(z,w)$ is the Bergman kernel of $V_k$.
\end{theorem}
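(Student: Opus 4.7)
The plan is to verify the two defining properties of the Bergman kernel for $V_k$: (i) $K_k^\Gamma(\cdot,w)\in V_k$ for every $w\in\D$, and (ii) for every $s\in V_k$ the reproducing identity
\[
\int_{F_\Gamma}\bigl(s(z), K_k^\Gamma(z,w)\bigr)_{\pi^*h}\,\omega_0(z)=s(w)
\]
holds. Property (i) is already the content of Definition and Theorem~\ref{def-and-thm-k-gamma}, so the work is entirely in (ii).

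To prove (ii), for each fixed $w$ I would choose an orthonormal basis $\{f_i\}_{i\ge1}$ of $\hcal_{\D,k}$ with $f_1=s_w$, the peak section at $w$, and $f_i(w)=0$ for $i\ge 2$. Then $K(z,w)=s_w(z)\otimes\bar s_w(w)$, so $K_k^g(z,w)=g^*s_w(z)\otimes\bar s_w(w)$ and
\[
K_k^\Gamma(z,w)=\Bigl(\sum_{g\in\Gamma}g^*s_w(z)\Bigr)\otimes\bar s_w(w).
\]
The integral in (ii) thus factorizes, in the appropriate tensorial sense, as the scalar $\int_{F_\Gamma}(s,\sum_g g^*s_w)_{\pi^*h}\omega_0$ paired with $\bar s_w(w)$. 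The computation of this scalar is the content of Lemma~\ref{lem-int-s-sum-g-s-p}, so the theorem follows by feeding that lemma back into the factorization and using the homogeneity value $|s_w(w)|_{\pi^*h}^2=\frac{k-1/2}{2\pi}$ to cancel the normalizing constants and recover $s(w)$.

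The conceptual content of Lemma~\ref{lem-int-s-sum-g-s-p}, which I would sketch again in situ if a direct argument is preferred, is the classical unfolding trick: $\Gamma$-invariance of $s$, together with the fact that $\tilde g$ acts as the identity on fibres of $\pi^*L^k$ and that $\omega_0$ is $\Gamma$-invariant, yields
\[
\int_{F_\Gamma}(s,g^*s_w)_{\pi^*h}\omega_0=\int_{g(F_\Gamma)}(s,s_w)_{\pi^*h}\omega_0,
\]
and summing over $g\in\Gamma$, with $\{g(F_\Gamma)\}$ tiling $\D$ up to a null set, gives $\int_\D(s,s_w)_{\pi^*h}\omega_0$. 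The exchange of sum and integral is justified by the absolute convergence already proved in Lemmas~\ref{lem-sum-g-f-0} and~\ref{lem-sum-g-f-0-in-v-k}, combined with the boundedness of $|s|_{\pi^*h}$ on $\D$ and the $L^1(\omega_0)$ estimate $\int_\D|s_w|_{\pi^*h}\omega_0=\sqrt{\frac{k-1/2}{2\pi}}\cdot\frac{2\pi}{k-1}<\infty$ for $k\ge 2$.

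The main obstacle is the final evaluation: once one has $\int_\D(s,s_w)_{\pi^*h}\omega_0$, one must recognize this as the correct reproducing scalar for a section that is bounded but \emph{not} $L^2$ on $\D$, so the standard Hilbert space argument for $\hcal_{\D,k}$ does not apply verbatim. I would resolve this by using a hyperbolic automorphism of $\D$ to transport $w$ to $0$, trivializing by $\bm e_k$ so $s=u\,\bm e_k$ with $u$ bounded holomorphic, and then computing directly
\[
\int_\D u(z)(1-|z|^2)^{2k-2}\,2\sqrt{-1}\,dz\wedge d\bar z=\frac{2\pi}{2k-1}\,u(0),
\]
which follows by expanding $u$ as a power series and invoking the rotational orthogonality of the monomials $z^n$. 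Combining this with the explicit values of $s_w(w)$ and $|s_w(w)|_{\pi^*h}^2$ yields $\int_\D(s,s_w)_{\pi^*h}\omega_0\cdot\bar s_w(w)=s(w)$, completing the proof.
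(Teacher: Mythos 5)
Your proposal is correct and follows essentially the same route as the paper: you verify membership in $V_k$ via Definition and Theorem~\ref{def-and-thm-k-gamma}, factor $K_k^\Gamma(z,w)=\bigl(\sum_{g\in\Gamma}g^*s_w(z)\bigr)\otimes\bar s_w(w)$ through the peak-section basis, and reduce the reproducing identity to Lemma~\ref{lem-int-s-sum-g-s-p}, whose unfolding-plus-rotational-invariance proof you re-derive just as in the paper. The only blemish is the constant in your auxiliary display: with $\omega_0=\frac{2\sqrt{-1}\,dz\wedge d\bar z}{(1-|z|^2)^2}$ one gets $\int_\D u(z)(1-|z|^2)^{2k-2}\,2\sqrt{-1}\,dz\wedge d\bar z=\frac{4\pi}{2k-1}\,u(0)=\frac{2\pi}{k-1/2}\,u(0)$ rather than $\frac{2\pi}{2k-1}\,u(0)$, and it is this corrected value that cancels exactly against $|s_w(w)|_{\pi^*h}^2=\frac{k-1/2}{2\pi}$ to give $s(w)$.
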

\begin{proof}
	By the proof in Definition and Theorem \ref{def-and-thm-k-gamma}, for fixed $w$, we have \[K_{k}^\Lambda(z,w)=(\sum_{g\in \Lambda}g^*s_w(z))\otimes \bar{s}_w(w).\]
	So for each $S\in \hcal_{X,k}$, by Lemma \ref{lem-int-s-sum-g-s-p} we have \[\int_{F_\Lambda} (S,K_{k}^\Lambda(z,w))_{\pi^*h}\frac{\omega_0^n}{n!}=\frac{S(w)}{s_w(w)}s_w(w)=S(w).\]
	We have proved the theorem.

\end{proof}
\subsection{Proof of the main results}
\subsubsection{Proof of Theorem \ref{thm-main}}
\begin{proof}
    For a point \(p\in X\) choose a lift \(\tilde p\in\B^n\); without loss of generality we may take \(\tilde p=0\). By Theorem \ref{thm-k-gamma-bergman} we have the identity
    \[
    \rho_{X,k}(0)=K_{k}^\Lambda(0,0)
    = \frac{1}{(4\pi)^n} \frac{(2k-1)!}{(2k-n-1)!} \;+\; \sum_{g\in\Lambda\setminus\{1\}} K^g_k(0,0).
    \]
	Recall that an element $g\in \Lambda$ is primitive if it can not be written as $g=a^m$ for some $a\in\Lambda$ and $m>1$.
    Let \(\mathcal P\) denote the set of primitive elements of \(\Lambda\). Writing \(\langle g\rangle=\{g^m:\,m\in\Z\}\) for the cyclic subgroup generated by \(g\in\mathcal P\), we may reorganise the sum as
    \[
    \sum_{g\in\Lambda\setminus\{1\}} K^g_k(0,0)
    = \sum_{g\in\mathcal P}\sum_{m=1}^\infty K^{g^m}_k(0,0)
    = \frac12\sum_{g\in\mathcal P}\sum_{g'\in\langle g\rangle\setminus\{1\}} K^{g'}_k(0,0).
    \]

    Fix \(g\in\mathcal P\). The quotient \(\B^n/\langle g\rangle\) is either a complex hyperbolic cylinder $X_{\lambda,\vartheta}$ (if \(g\) is hyperbolic) or a complex hyperbolic cusp $\ccal^I_{\vartheta,b}$ or $\ccal^{II}_{\vartheta,b}$ (if \(g\) is parabolic). Let
    \(\pi_g:\B^n\to \B^n/\langle g\rangle\) be the quotient map and \(\chi_g:\B^n/\langle g\rangle\to X\) the natural covering with \(\pi=\chi_g\circ\pi_g\). For each $g'\in \langle g\rangle$, let $e^{2\pi \alpha_{g'}\sqrt{-1}}$ be the holonomy of $L$ along the geodesic loop based at $p$ corresponding to $g$ and $\tilde{p}$. Since the pull-back of a parallel section is still a parallel section, the holonomy of the connection $\chi_g^*\nabla$ on $\chi_g^*L$ along the geodesic loop based at $\pi_g(0)$ corresponding to $g'$ and $\tilde{p}$ is also $e^{2\pi \alpha_{g'}\sqrt{-1}}$.

    Applying Theorems \ref{thm-cyl-model-main}, \ref{thm-main-cusp-model-I} and \ref{thm-cusp-model-II} to the quotient \(\B^n/\langle g\rangle\) (and using Proposition \ref{prop-correspondece} to identify geodesic loops with group elements) yields, for each \(g\in\mathcal P\),
    \[
    \sum_{g'\in\langle g\rangle\setminus\{1\}} K^{g'}_k(0,0)
    = C(n,k) \sum_{g'\in\langle g\rangle\setminus\{1\}}
    \cosh^{-2k}\!\Big(\frac{d(0,g'(0))}{2}\Big)\cos\!\big(2\pi k\alpha_{g'}\big),
    \]
    where, $C(n,k)=\frac{1}{(4\pi)^n} \frac{(2k-1)!}{(2k-n-1)!}$ and \(d(0,g'(0))\) denotes the hyperbolic distance from \(0\) to \(g'(0)\).

    Summing over all primitive elements \(g\in\mathcal P\) gives the formula in Theorem \ref{thm-main}. Finally, since for our Fuchsian group \(\Lambda\) one has \(\delta(\Lambda)\le1\), the convergence and the lower bound on \(k\) are ensured by the hypotheses in Theorems \ref{thm-cyl-model-main}, \ref{thm-main-cusp-model-I} and \ref{thm-cusp-model-II}. So we may take \(k_0=2n+[\frac{n}{2}]+1 \) in general, and take $k_0=n+[\frac{n}{2}]+1 $ when $X$ is compact.
\end{proof}

\subsubsection{Proof of Theorem \ref{thm-off-diag}}

\begin{proof}
	Let $\tilde{x}$ be a lift of $x$ in $\B^n$. Let $\tilde{y}$ be a lift of $y$ in $\B^n$ such that $d(\tilde{x},\tilde{y})=d(x,y)$. By taking a conjugation of $\Gamma$, we can simply assume that $\tilde{y}=0$. Then by Theorem \ref{thm-k-gamma-bergman}, we have 
	\[|K_{X,k}(x,y)|=|\pi^*K_{X,k}(\tilde{x},\tilde{y})|=|\sum_{g\in \Gamma}g^*s_0(x)\otimes s_0(0)|,\]
	where $s_0=\sqrt{C(n,k)}\bm{e}_k$ is the peak section of $\hcal_{\B^n,k}$ at $0$. Since \[|s_0(x)|=\sqrt{C(n,k)}\cosh^{-k}\frac{d(0,x)}{2},\] the same holds for $s_p$ for any $p\in \B^n$. Then, since $g^*s_0=s_{g^{-1}(0)}$, we have $|g^*s_0(\tilde{x})|=\sqrt{C(n,k)}\cosh^{-k}\frac{d(g^{-1}(0),\tilde{x})}{2}$. Therefore, \[|K_{X,k}(x,y)|\leq C(n,k)\sum_{g\in \Gamma}\cosh^{-k}\frac{d(g(0),\tilde{x})}{2}.
	\]So the first part of the theorem follows. 

	When the minimizing geodesic between $x$ and $y$ is unique, we have $|g^*s_0(\tilde{x})|\leq \sqrt{C(n,k)}\cosh^{-k}\frac{l_2}{2}$, for all $g\neq 1$, where $l_2=\min_{\gamma\in \mathfrak{G}_{x,y}, \ell(\gamma)>d(x,y)} \ell(\gamma)$. So for $k$ large enough, we have $\sum_{g\in \Gamma,d(g(0),\tilde{x})>l_1}\cosh^{-k}\frac{d(g(0),\tilde{x})}{2}\leq c \cosh^{-k}\frac{l_2}{2}$, for some $c>0$. So the second part of the theorem follows.
\end{proof}
\subsubsection{Proof of Theorem \ref{thm-max-min}}
For each point $p\in X$, let $\tilde{p}$ be a lift of $p$ in $\D$. For each $R>0$, let \[N_R(\tilde{p})=\#\{g\in \Gamma: d(\tilde{p},g(\tilde{p}))<R\}.\]
Clearly, $N_R(\tilde{p})$ is independent of the choice of the lift $\tilde{p}$, so we can simply write it as $N_R(p)$.
\begin{lemma}\label{lem-n-r-p}
	Suppose that $X$ is a hyperbolic surface with finite topology and without cusps. Then for each $\epsilon>0$, there exists a constant $C>0$ such that\[N_R(p)\leq Ce^{(1+\epsilon)R},\] for all $p\in X$.
\end{lemma}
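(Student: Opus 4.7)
The plan is to reduce the statement to a standard volume-packing estimate in the hyperbolic disk, whose decisive input is a uniform positive lower bound on the injectivity radius of $X$. First, I would observe that, since $X$ has finite topology and no cusps, the surface is convex cocompact: it decomposes as a compact convex core together with finitely many funnel ends, each isometric to a half-cylinder $\{(t,\theta):t\ge 0\}$ with metric $dt^2+\ell_i^{\,2}\cosh^2(t)\,d\theta^2/(2\pi)^2$. On the compact convex core the injectivity radius attains a positive minimum by continuity; along each funnel the injectivity radius is a monotone increasing function of $t$, bounded below by $\ell_i/2$. Consequently there exists $r_0>0$ with $r(p)\ge r_0$ for every $p\in X$. (This uniform lower bound is, in fact, precisely the statement already invoked in the remark following Theorem~\ref{thm-main}.)

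Next, fix $p\in X$ and a lift $\tilde p\in\D$. By the definition of injectivity radius,
\[
d(g_1\tilde p,g_2\tilde p)=d(\tilde p,g_1^{-1}g_2\tilde p)\ge 2r(p)\ge 2r_0
\]
whenever $g_1\neq g_2$ in $\Gamma$. Therefore the hyperbolic disks $\{B(g\tilde p,r_0)\}_{g\in\Gamma}$ are pairwise disjoint in $\D$, each of hyperbolic area $2\pi(\cosh r_0-1)$.

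If $d(\tilde p,g\tilde p)<R$ then $B(g\tilde p,r_0)\subset B(\tilde p,R+r_0)$, so summing areas of the disjoint disks gives
\[
N_R(p)\cdot 2\pi(\cosh r_0-1)\;\le\;2\pi\bigl(\cosh(R+r_0)-1\bigr),
\]
which yields
\[
N_R(p)\;\le\;\frac{\cosh(R+r_0)-1}{\cosh r_0-1}\;=\;O(e^{R})
\]
as $R\to\infty$, with the implied constant independent of $p$. Since $O(e^R)\le Ce^{(1+\epsilon)R}$ for $R$ large, and since the bound is trivially absorbed into $C$ for bounded $R$, the desired estimate follows.

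The only non-routine step is the first one, namely the uniform positivity of $r(\cdot)$ on a convex cocompact surface; once that is granted, the remainder is a direct disjoint-balls packing in $\D$. The $\epsilon$ of slack in the statement is generous—the argument actually delivers the sharper bound $N_R(p)\le Ce^{R}$—which may be useful later when this lemma is applied inside the proof of Theorem~\ref{thm-max-min}.
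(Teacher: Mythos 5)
Your proposal is correct and follows essentially the same route as the paper: establish a uniform positive lower bound on the injectivity radius from the finite-topology/no-cusp (convex core plus funnels) structure, then pack disjoint hyperbolic balls of that radius around the orbit points inside $B(\tilde p,R+r_0)$ and compare areas, yielding the uniform bound $N_R(p)\le Ce^{R}\le Ce^{(1+\epsilon)R}$. The paper's proof is the same disjoint-translates area comparison with $\delta=\inf_X\operatorname{inj}$, so there is nothing substantive to add.
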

\begin{proof}
    First, since X has finite topology and no cusps, every end is a funnel and the injectivity radius is uniformly positive. Set
    \[
    \delta:=\inf_{x\in X}\operatorname{inj}(x)>0.
    \]

    Fix \(p\in X\) and let \(\tilde p\in\D\) be a lift. For \(r>0\) denote by \(B_{\tilde p}(r)\) the hyperbolic disk of radius \(r\) centered at \(\tilde p\). If \(1\neq g\in\Gamma\) then
    \[
    g\bigl(B_{\tilde p}(\delta)\bigr)\cap B_{\tilde p}(\delta)=\varnothing,
    \]
    because distinct deck-translates of the radius-\(\delta\) ball are disjoint by the definition of \(\delta\). Moreover, whenever \(d(\tilde p,g\tilde p)<R\) we have
    \[
    g\bigl(B_{\tilde p}(\delta)\bigr)\subset B_{\tilde p}(R+\delta).
    \]
    Counting disjoint translates yields the area bound
    \[
    N_R(\tilde p)\,\mathrm{Area}\bigl(B_{\tilde p}(\delta)\bigr)
    \le \mathrm{Area}\bigl(B_{\tilde p}(R+\delta)\bigr),
    \]
    so
    \[
    N_R(\tilde p)\le
    \frac{\mathrm{Area}\bigl(B_{\tilde p}(R+\delta)\bigr)}
    {\mathrm{Area}\bigl(B_{\tilde p}(\delta)\bigr)}.
    \]

    For the hyperbolic plane (curvature \(-1\)) the disk area is explicit:
    \[
    \mathrm{Area}\bigl(B_{\tilde p}(r)\bigr)=4\pi\sinh^2\!\frac r2
    =2\pi(\cosh r-1).
    \]
    Hence
    \[
    N_R(\tilde p)\le
    \frac{\sinh^2\!\bigl(\tfrac{R+\delta}{2}\bigr)}
    {\sinh^2\!\bigl(\tfrac{\delta}{2}\bigr)}
    \le C\,e^{R}
    \]
    for a constant \(C=C(\delta)>0\). In particular, for any \(\varepsilon>0\) there exists \(C_\varepsilon>0\) with
    \[
    N_R(\tilde p)\le C_\varepsilon e^{(1+\varepsilon)R},
    \]
    as claimed.

\end{proof}
\begin{proof}[Proof of Theorem \ref{thm-max-min}]

	For a point $p\in X$, let $\tilde{p}$ be a lift of $p$ in $\D$. For each $g\in \Gamma$,
	we write $\gamma_{\tilde{p},g}$ for the geodesic loop based at $p$ corresponding to the geodesic segment $[\tilde{p},g(\tilde{p})]$. Let $\gamma$ be a systole. Let $p\in \gamma$, and let $\tilde{p}\in \D$ be a lift of $p$. $\exists g\in \Gamma$ such that $\gamma_{\tilde{p},g}=\gamma$. Denote by \[\Gamma^g=\Gamma\setminus \{1,g,g^{-1}\}. \]
	Since, by assumption, there is no other systole passing through $p$, we have \[\min_{f\in\Gamma^g}d(f(\tilde{p}),\tilde{p})> l_1.\] The same is true for all point $\tilde{q}$ in the geodesic segment $[\tilde{p},g(\tilde{p})]$. For $x\in \D$, we define a function \[A_g(x)=\min_{f\in\Gamma^g}d(f(x),x).\]
By the proper discontinuity of the action of $\Gamma$ on $\D$, it is not hard to see that $A_g(x)$ is a continuous function on $\D$. So \[l_3(\gamma)\eqd \min_{q\in [\tilde{p},g(\tilde{p})]}A_g(q)>l_1.\]
So there exists an open neighborhood $U_{[\tilde{p},g(\tilde{p})]}$ of $[\tilde{p},g(\tilde{p})]$ such that for any $x\in U_{[\tilde{p},g(\tilde{p})]}$, \[A_g(x)>\frac{l_1+l_3(\gamma)}{2}.\] $U_{[\tilde{p},g(\tilde{p})]}$ is then mapped to an open neighborhood $V_{\gamma}$ of $\gamma$ in $X$ by $\pi$. So for each point $q\in V_{\gamma}$, we have that \[\ell(\gamma_{\tilde{q},f})>\frac{l_1+l_3(\gamma)}{2},\]for any $f\in \Gamma^g$, where $\tilde{q}\in [\tilde{p},g(\tilde{p})]$ is a lift of $q$.
We then let \[l_3=\min_{\gamma \text{ is a systole}} l_3(\gamma).\]
Since there is only finite number of systoles on $X$, we have $l_3>l_1$. Let $V=\cup_{\gamma \text{ is a systole}}V_{\gamma}$. 
Then by the uniform control of the distribution of the orbits of $\Gamma$ in Lemma \ref{lem-n-r-p}, one can see that  $\exists C_1>0,k_1>0$ such that for $k\geq k_1$, we have\[
\sum_{\gamma\in \mathfrak{G}_q, \ell(\gamma)>\frac{l_1+l_3}{2}}\cosh^{-2k}\frac{\ell(\gamma)}{2} \leq C_1 \cosh^{-2k}\frac{l_1+l_3}{4},
\] for any $q\in V$. 

And for each $q\in V$, it is not hard to see that the length $\ell_q$ of the shortest geodesic loop based at $q$ satisfies the equality \begin{equation}\label{e-length-gamma}
\cosh^2\frac{\ell_q}{2}=\cosh^2\frac{l_1}{2}\cosh^2  d(q,\tilde{\gamma})-\sinh^2 d(q,\tilde{\gamma}).
\end{equation}
So we define the function $\nu(q)>0 $ by \[\cosh^2\frac{\nu(q)}{2}=\cosh^2\frac{l_1}{2}\cosh^2  d(q,\tilde{\gamma})-\sinh^2 d(q,\tilde{\gamma}),\]for each $q\in V$.
Then we have \begin{equation}
	\rho_k(q)-\frac{k-\frac12}{2\pi}\leq \frac{k-\frac12}{2\pi}(2\cosh^{-2k}\frac{\nu(q)}{2}+C_1 \cosh^{-2k}\frac{l_1+l_3}{4}),
\end{equation}for each $q\in V$. And if $p$ is in some systole, it is then easy to see that 
\begin{equation}
	\rho_k(p)-\frac{k-\frac12}{2\pi}\geq \frac{k-\frac12}{2\pi}(2\cosh^{-2k}\frac{l_1}{2}-C_1 \cosh^{-2k}\frac{l_1+l_3}{4}),
\end{equation}
We claim that there exists $l_4>l_1$ such that $\ell(\gamma)\leq l_4$, for each $\gamma\in \mathfrak{G}_q$ and each $q\in X\setminus V$.

Notice that $l_1$ is also the minimum of $\ell_g$ for $g\in \Gamma$. Let $l_2$ be the second smallest length among $\{\ell_g|g\in \Gamma\}$. 
If $\ell_g\geq l_2$, then for any lift $\tilde{q}\in \D$ of $q$, we have $\ell(\gamma_{\tilde{q},g})\geq l_2$. If $\ell_g= l_1$, then, since $\exists \delta_1>0$ such that $d(q,\gamma)\geq \delta_1$ for all $q\notin V$ and $\gamma$ a systole, we get that the distance of $\tilde{q}$ to the geodesic along which $g$ translates is $\geq \delta_1$. Therefore, by \eqref{e-length-gamma},
$\exists l_5>l_1$ such that $\ell(\gamma_{\tilde{q},g})\geq l_5$. So we have proved the claim.

Therefore, $\exists C_2>0,k_2>0$ such that for $k\geq k_2$, we have\[
\sum_{\gamma\in \mathfrak{G}_q}\cosh^{-2k}\frac{\ell(\gamma)}{2} \leq C_2 \cosh^{-2k}\frac{l_4}{2},
\] for any $q\notin V$.

Now suppose that $\rho_k(q)>\rho_k(p)$ for some $p$ in a systole. Then for $k$ large enough, we  must have $q\in V$ and \[2\cosh^{-2k}\frac{l_1}{2}\leq 2\cosh^{-2k}\frac{\nu(q)}{2}+2C_1 \cosh^{-2k}\frac{l_1+l_3}{4}.\]

Thus, let $\gamma$ be the systole that is closest to $q$, we have \[1-(1-\tanh^2\frac{l_1}{2}\sinh^2 d(q,\gamma))^k\leq C_1(\cosh^{2k}\frac{l_1}{2})/(\cosh^{2k}\frac{l_1+l_3}{4}). \]
So we have \[\frac{k}{2}\tanh^2\frac{l_1}{2}\sinh^2 d(q,\gamma)\leq C_1(\cosh^{2k}\frac{l_1}{2})/(\cosh^{2k}\frac{l_1+l_3}{4}). \]

When $k$ is large enough, $\frac{2C_1}{k\tanh^2\frac{l_1}{2}}<1$. This completes the proof of the maximum part of the theorem. The proof of the minimum part is almost the same.
\end{proof}

\begin{proposition}\label{prop-kx}
	When $L=K_X$, the holonomy along a simple closed geodesic is $e^{2\pi \alpha\sqrt{-1}}=1$.
\end{proposition}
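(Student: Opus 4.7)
The plan is to identify the natural Hermitian metric on $K_X$ (the one induced from $\omega$), invoke the classical Kähler identification of the Chern connection on $K_X$ with the Levi-Civita connection, and then observe that the unit tangent of a simple closed geodesic is a flat section of $T^{1,0}X$ that returns to itself.

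First, I would verify that the natural Hermitian metric on $K_X$ dual to $\omega$, given locally by $\|dz\|_h^2 = g_{z\bar z}^{-1}$ where $\omega = i g_{z\bar z}\,dz\wedge d\bar z$, indeed satisfies the hypothesis $i\Theta_h = \omega$. This follows from the hyperbolic Kähler-Einstein equation $\partial_z\partial_{\bar z}\log g_{z\bar z} = g_{z\bar z}$, which yields $\Theta_h = \partial\bar\partial\log g_{z\bar z} = -i\omega$. Any two Hermitian metrics on $K_X$ with the same curvature differ by a unitary global gauge transformation that leaves the parallel transport (hence the holonomy) unchanged, so it suffices to compute with this canonical choice.

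Next, I would use the standard fact that on a Kähler manifold the Chern connection on $T^{1,0}X$ coincides with the $(1,0)$-component of the complexified Levi-Civita connection. Dually, the Chern connection on $K_X = (T^{1,0}X)^*$ is induced from the Levi-Civita connection on the holomorphic tangent bundle, so holonomies on $K_X$ are the inverses of those on $T^{1,0}X$.

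Finally, let $\gamma:[0,\ell]\to X$ be a simple closed geodesic parametrized by arc length. Because $\gamma$ is a simple closed curve (rather than merely a geodesic loop), one has $\dot\gamma(\ell) = \dot\gamma(0)$ as real tangent vectors. The geodesic equation $\nabla^{\mathrm{LC}}_{\dot\gamma}\dot\gamma = 0$ together with $\nabla^{\mathrm{LC}} J = 0$ shows that the $(1,0)$ projection $v = \tfrac12(\dot\gamma - iJ\dot\gamma) \in T^{1,0}X$ is parallel along $\gamma$ for the Chern connection and satisfies $v(\ell) = v(0)$. Hence the holonomy of $T^{1,0}X$ around $\gamma$ is trivial, and so is the dual holonomy on $K_X$, giving $e^{2\pi\alpha\sqrt{-1}} = 1$ as claimed. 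The argument is essentially formal, and the one point worth flagging is that the closing-up condition $\dot\gamma(\ell) = \dot\gamma(0)$ can fail for a generic geodesic loop, which is precisely why the statement must be restricted to simple closed geodesics.
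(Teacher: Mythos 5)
Your core argument is correct, and it takes a genuinely different route from the paper. The paper passes to the intermediate cover $\D/\langle g\rangle$ (a hyperbolic cylinder), writes the induced metric on $K_X$ there explicitly as $|z|^2\cos^2(\eta\log|z|)/\eta^2$, and evaluates $\int_{|z|=1}\partial\phi=\pm 2\pi i$ along the central geodesic circle; you instead use the Kähler identification of the Chern connection on $T^{1,0}X$ with the Levi-Civita connection and the fact that the unit tangent of a smoothly closed geodesic is a parallel section that closes up, so the $\mathrm{U}(1)$ holonomy fixes a nonzero vector and is therefore trivial. Your argument is more conceptual, avoids the cylinder model entirely, and actually proves triviality of the holonomy along \emph{every} closed geodesic, simple or not, since any closed geodesic satisfies $\dot\gamma(\ell)=\dot\gamma(0)$; the condition only fails for geodesic loops with a corner at the base point, so your closing remark should contrast closed geodesics with geodesic loops rather than invoke simplicity. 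The paper's computation, by contrast, is tailored to the cylinder setup already in place from the proof of Theorem~\ref{thm-main}.

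One step you should delete or repair: the claim that any two Hermitian metrics on $K_X$ with the same curvature have the same holonomy. On a noncompact surface this is false: two such metrics differ by $e^{-u}$ with $u$ harmonic but possibly nonconstant, and the holonomy along $\gamma$ changes by $\exp\bigl(\oint_\gamma \partial u\bigr)$, which is a nontrivial unit complex number in general --- the paper's own twisted metrics $h_\eta^k|z|^{-2\mathfrak{m}_k}$ on the cylinder are exactly such a modification, having the same curvature but different holonomy; if holonomy were determined by the curvature, the holonomy factors $\cos(2\pi k\alpha_\gamma)$ in Theorem~\ref{thm-main} would carry no information. The claim is valid only in the compact case, where harmonic functions are constant. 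This does not sink your proof, because Proposition~\ref{prop-kx} (as the paper uses and proves it) concerns the metric on $K_X$ induced by $\omega$ itself, which you verified satisfies $i\Theta_h=\omega$ and then treated directly; simply state that the proposition refers to this induced metric rather than appealing to the false gauge-equivalence reduction.
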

\begin{proof}
	We continue to use the notations in the proof of Theorem \ref{thm-main}.
	Let $\gamma$ be a simple closed geodesic in $X$. Let $g\in \Gamma$ be an element corresponding to $\gamma$ and the lift $\tilde{p}$. Since $\gamma$ is simple, $g$ is hyperbolic and $\gamma$ is the image of the geodesic along which $g$ translates.  Then
	$\D/\langle g\rangle$ is a hyperbolic cylinder.
	 Then $\chi_g:\D/\langle g\rangle\to X$ is the natural covering map.  Let $\pi_g:\D\to \D/\langle g\rangle$ be the quotient map such that $\pi=\chi_g\circ \pi_g$.
	Since the metric on $K_X$ is defined by $\omega$, the metric $\chi_g^*h$ on $\chi_g^*K_X$ is given by 
	\[
	\chi_g^*h=\frac{idz\wedge d\bar{z}}{\omega_\eta}=\frac{|z|^2\cos^2(\eta\log|z|)}{\eta^2}.
	\]
	Let $\phi=-\log(|z|^2\cos^2(\eta\log|z|)) $. Then on $\{|z|=1\}$, $\partial \phi=-\frac{dz}{z}+0$. So 
	\[
	\int_{|z|=1}\partial \phi=\pm 2\pi i.
	\]
	And we have proved the proposition.
\end{proof}

\bibliographystyle{plain}


\bibliography{references}

\end{document}